\documentclass[11pt]{amsart}
\headheight=8pt     \topmargin=0pt
\textheight=624pt   \textwidth=432pt
\oddsidemargin=18pt \evensidemargin=18pt

\usepackage{amssymb}
\usepackage{verbatim}
\usepackage{hyperref}
\usepackage{color}



\usepackage{scalerel}
\newcommand\vwidehat[1]{\arraycolsep=0pt\relax%
\begin{array}{c}
\stretchto{
  \scaleto{
    \scalerel*[\widthof{\ensuremath{#1}}]{\kern-.5pt\bigwedge\kern-.5pt}
    {\rule[-\textheight/2]{1ex}{\textheight}} 
  }{\textheight} %
}{0.5ex}\\           
#1\\                 
\rule{-1ex}{0ex}
\end{array}
}

\usepackage{mathabx}

\newtheorem{theorem}{Theorem}    
\newtheorem{proposition}[theorem]{Proposition}

\newtheorem{corollary}[theorem]{Corollary}
\newtheorem{lemma}[theorem]{Lemma}

\theoremstyle{definition}
\newtheorem{definition}{Definition}
\newtheorem{question}[definition]{Question}

\numberwithin{theorem}{section}
\numberwithin{definition}{section}
\numberwithin{equation}{section}

\title[A constrained optimization problem]{A constrained optimization problem for\\  the Fourier transform: Existence}

\author{Dominique Maldague}
\address{
        Dominique Maldague\\
        Department of Mathematics\\
        University of California \\
        Berkeley, CA 94720-3840, USA}
\email{dmaldague@berkeley.edu}

\date{March 10, 2019.}






\newcommand{\R}{\mathbb R}

\newcommand{\Z}{\mathbb Z}
\newcommand{\C}{\mathbb C}

\newcommand{\T}{\mathbb T}
\newcommand{\F}{\mathcal F}

\newcommand{\B}{\mathbb B}

\newcommand{\g}{\gamma}
\newcommand{\p}{\varphi}

\renewcommand{\l}{\ell}
\renewcommand{\a}{\alpha}

\newcommand{\mc}{\mathcal}
\newcommand{\mb}{\mathbb}
\newcommand{\f}{\mathfrak}

\begin{document}

\setcounter{tocdepth}{1}

\begin{abstract}
Among functions majorized by indicator functions of sets with measure one, which functions have maximal  Fourier transforms in the $L^q$ norm? We partially prove the existence of such functions using techniques from additive combinatorics  to establish a conditional precompactness for maximizing sequences. 

\end{abstract}

\maketitle

\tableofcontents

\section{Introduction}
Define the Fourier transform as $ \mc{F}(f)(\xi)=\widehat{f}(\xi)=\int_{\R^d}e^{- 2\pi i x\cdot \xi}f(x)dx$ for a function $f:\R^d\to\C$. The Fourier transform is a contraction from 
$L^1(\R^d)$ to $L^\infty(\R^d)$ and is unitary on $L^2(\R^d)$. Interpolation gives the Hausdorff-Young inequality $\|\widehat{f}\|_q\le \|f\|_p$ where $p\in (1,2)$, $1=\frac{1}{p}+\frac{1}{q}$. In \cite{beckner}, Beckner proved the sharp Hausdorff-Young inequality 

\begin{align} \|\widehat{f}\|_q\le {\bf{C}}_q^d\|f\|_p \label{maineq} \end{align}

where ${\bf{C}}_q=p^{1/2p}q^{-1/2q}$. In 1990, Lieb proved that Gaussians are the \emph{only} maximizers of (\ref{maineq}), meaning that $\|\widehat{f}\|_q/\|f\|_p={\bf{C}}_q^d$ if and only if $f=c\exp(-Q(x,x)+v\cdot x)$ where $Q$ is a positive definite real quadratic form, $v\in\C^d$ and $c\in\C$. In 2014, Christ established a sharpened Hausdorff-Young inequality by bounding $\|\widehat{f}\|_q - {\bf{C}}_p^d \|f\|_p$ by a negative multiple of an $L^p$ distance function of $f$ to the Gaussians.

In \cite{c2}, Christ made partial progress proving the existence of maximizers for the ratio $\|\widehat{1_E}\|_q/|E|^{1/p}$ where $E\subset\R^d$ is a positive Lebesgue measure set. Building on the work of Burchard in \cite{burchard}, Christ identified maximizing  sets to be ellipsoids for exponents $q\ge 4$ sufficiently close to even integers \cite{c2}. The author correspondingly identified all maximizers for the inequality studied in this paper specialized to exponents $q\ge 4$ close to even integers in \cite{me!}.

Another variant of the Hausdorff-Young inequality replaces indicator functions by bounded multiples and modifies the functional as follows.
For $d\ge 1$, $q\in(2,\infty)$, and $p=q'$, we consider the inequality

\begin{equation}\label{eqn:main}
\|\widehat{f}\|_q\le {\bf{B}}_{q,d}|E|^{1/p}
\end{equation}
and define the quantities 
\begin{align}
\Psi_q(E):=\sup_{|f|\prec E} \frac{\|\widehat{f}\|_{q}}{\|1_E\|_p} \label{eq2}\\
{\bf{B}}_{q,d}:=\sup_{E}\Psi_q(E) \label{thisone}
\end{align}
where $|f|\prec E$ means $|f|\le 1_E$ and the supremum is taken over all Lebesgue measurable sets $E\subset\R^d$ with positive, finite Lebesgue measures. This quantity ${\bf{B}}_{q,d}$ is less than ${\bf{C}}_p^d$ by their definitions. The supremum (\ref{thisone}) is equal to 

\[   \sup_{f\in L(p,1)}\frac{\|\widehat{f}\|_q}{\|f\|_{\mc{L}}}\quad\quad\text{where}\quad\quad \|f\|_{\mc{L}}=\inf\{\|a\|_{\ell^1}\,:|f|=\sum_n a_n|E_n|^{-1/p}1_{E_n},\, a_n>0,|E_n|<\infty\} .\]

We prove this equivalence in Proposition \ref{Lorentz} in \textsection \ref{Lorentzdiscussion}. Lorentz spaces are a result of real interpolation between $L^p$ spaces. Since the quasinorm $\|\cdot\|_\mc{L}$ induces the standard topology on the Lorentz space $L(p,1)$, this is a natural quantity to study.

Christ used continuum versions of theorems of Balog-Szemer\'{e}di and Fre\u{\i}man from additive combinatorics to understand the underlying structure of functions with nearly optimal ratio $\|\widehat{f}\|_q/\|f\|_p$ in \cite{c1} and for sets $E$ with nearly optimal ration $\|\widehat{1_E}\|_q/\|1_E\|_p$ in \cite{c2}. We use similar techniques in this paper to (conditionally) prove the existence of extremizers for (\ref{maineq}) via a precompactness argument for extremizing sequences, presented in the following theorem. The theorem is conditional on an affirmative answer to a technical question, which is presented as Question \ref{dual near-ext=slice} in \textsection{\ref{product}}. 

\begin{theorem}\label{precompactness} Suppose that the claim in Question \ref{dual near-ext=slice} holds. Let $d\ge1$ and $q\in(2,\infty)$, $p=q'$. Let $(E_\nu)$ be a sequence of Lebesgue measurable subsets of $\R^d$ with $|E_\nu|\in\R^+$ and let $f_\nu$ be Lebesgue measurable functions on $\R^d$ satisfying $|f_\nu|\le 1_{E_\nu}$. Suppose that $\lim_{\nu\to\infty}|E_\nu|^{-1/p}\|\widehat{f_\nu}\|_{q}={\bf{B}}_{q,d}$. Then there exists a subsequence of indices $\nu_k$, a Lebesgue measurable set $E\subset\R^d$ with $0<|E|<\infty$, a Lebesgue measurable function $f$ on $\R^d$ satisfying $|f|\le 1_E$, a sequence  $(T_\nu)$ of affine automorphisms of $\R^d$, and a sequence of vectors $v_\nu\in\R^d$ such that  
\[ \lim_{k\to\infty}\|e^{-2\pi i v_{\nu_k}\cdot x}f_{\nu_k}\circ T_{\nu_k}^{-1}-f\|_p=0\quad\text{and}\quad \lim_{k\to\infty}|T_{\nu_k}(E_{\nu_k})\Delta E|=0. \]
\end{theorem}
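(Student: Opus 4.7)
The plan is to execute a concentration-compactness argument for the extremizing sequence in the spirit of Christ's treatment of related near-extremizer problems for the Hausdorff-Young inequality (\cite{c1}, \cite{c2}), with the conditional structural result from Question \ref{dual near-ext=slice} playing the role of the key compactness input.

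First I would set up the symmetry group and normalize the sequence. The ratio $\|\widehat{f}\|_q/|E|^{1/p}$, subject to $|f|\le 1_E$, is invariant under three commuting families of transformations: translations in $\R^d$; modulations $f(x)\mapsto e^{2\pi i v\cdot x}f(x)$, which affect only the pointwise phase of $\widehat{f}$; and the coupled linear action $f(x)\mapsto f(A^{-1}x)$, $E\mapsto A(E)$ for $A\in\mathrm{GL}_d(\R)$, under which both $\|\widehat{f}\|_q$ and $|E|^{1/p}$ scale by $|\det A|^{1/p}$. Combining translations with the linear part yields the full affine group. I would use these freedoms to select affine automorphisms $T_\nu$ and vectors $v_\nu$ so that, after relabeling $\widetilde f_\nu(x):=e^{-2\pi i v_\nu\cdot x}f_\nu(T_\nu^{-1}x)$ and $\widetilde E_\nu:=T_\nu(E_\nu)$, one has $|\widetilde E_\nu|=1$, the set $\widetilde E_\nu$ is placed in canonical affine position (say with John ellipsoid $B(0,1)$), and the bulk of $|\widehat{\widetilde f_\nu}|^q$ is centered at the origin in frequency space. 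The remaining task is $L^p$-compactness of $(\widetilde f_\nu)$ and measure-convergence of $(\widetilde E_\nu)$.

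The heart of the argument, and its main obstacle, is establishing tightness of the normalized sequence: for every $\epsilon>0$ there should exist $R<\infty$ and $\nu_0$ with $|\widetilde E_\nu\setminus B(0,R)|<\epsilon$ for all $\nu\ge\nu_0$. I would obtain this by combining continuum Balog--Szemer\'{e}di--Gowers and Fre\u{\i}man-type theorems, which convert the lower bound on $\|\widehat{\widetilde f_\nu}\|_q$ into approximate additive structure of level sets of $\widetilde f_\nu$, with the affirmative answer to Question \ref{dual near-ext=slice}, which identifies dual near-extremizers with ``slices'' of a fixed model and thereby forces the normalized $\widetilde E_\nu$ to coincide up to measure $o(1)$ with a bounded subset of $\R^d$. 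Without the slice characterization one cannot exclude the standard dichotomy scenario, in which $\widetilde f_\nu$ splits into two pieces whose supports cannot simultaneously be brought to a common compact region by any admissible symmetry; it is precisely this scenario that the conditional hypothesis rules out, and this is why the theorem is stated as conditional on an affirmative answer to that question.

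With tightness in hand, the uniform bound $|\widetilde f_\nu|\le 1$ and the (approximate) common compact support let me extract a subsequence $\widetilde f_{\nu_k}$ converging weakly in $L^p$ to some $f$, while $1_{\widetilde E_{\nu_k}}\to 1_E$ in measure for some measurable $E$ with $|E|=1$ and $|f|\le 1_E$. Weak lower semicontinuity of the $L^q$ norm applied to $\widehat{\widetilde f_{\nu_k}}\rightharpoonup\widehat{f}$, together with the extremizing hypothesis and the definition of $\mathbf{B}_{q,d}$, forces $\|\widehat{f}\|_q=\mathbf{B}_{q,d}|E|^{1/p}$, so $f$ is itself an extremizer and in particular $\|\widetilde f_{\nu_k}\|_p\to \|f\|_p$. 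The upgrade from weak to strong $L^p$ convergence $\widetilde f_{\nu_k}\to f$ then follows either from the uniform convexity of $L^p$ for $p\in(1,\infty)$ combined with the norm convergence, or, after extracting a further subsequence converging almost everywhere, from dominated convergence using the uniform bound $|\widetilde f_{\nu_k}-f|\le 2$ and the tight common support. The measure convergence $|\widetilde E_{\nu_k}\Delta E|\to 0$ is then either the already-recorded convergence of the indicators, or follows by the same dominated-convergence argument applied to $1_{\widetilde E_{\nu_k}}$ in place of $\widetilde f_{\nu_k}$.
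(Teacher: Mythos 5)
Your outline breaks down at the two places where the paper actually has to work, and in both cases the step you substitute does not hold. First, the passage from weak convergence to the statement that the limit $f$ is an extremizer is backwards: weak lower semicontinuity of the $L^q$ norm applied to $\widehat{\widetilde f_{\nu_k}}\rightharpoonup\widehat f$ only gives $\|\widehat f\|_q\le\liminf\|\widehat{\widetilde f_{\nu_k}}\|_q$, which is the wrong direction; nothing in your argument prevents Fourier mass from escaping to infinity in frequency (or spreading out), in which case $f$ could even be $0$. Merely ``centering'' the bulk of $|\widehat{\widetilde f_\nu}|^q$ at the origin by a modulation is not enough — you need quantitative frequency localization to a bounded set of controlled measure, and this is exactly where the conditional hypothesis enters in the paper: Question \ref{dual near-ext=slice} feeds into Proposition \ref{ellipsoiddecompdual} and Proposition \ref{basicallyanellipsedual} (localization of $\widehat f$ to an ellipsoid), which combine with the reverse-uncertainty compatibility Lemma \ref{Domcompat} and a Rellich-type argument to give strong $L^q$ precompactness of $(\widehat{f_\nu})$ (Lemma \ref{precompactnessdual}). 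You instead attribute the conditional hypothesis to spatial tightness of $\widetilde E_\nu$; but the spatial localization (Proposition \ref{basicallyanellipse}, via Proposition \ref{near-ext=slice}) is proved unconditionally, so your account both misplaces the role of the hypothesis and omits the frequency-side compactness that your semicontinuity step silently presupposes.

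Second, your extraction of the limiting set is unjustified: uniform boundedness and tightness of sets of measure one do not yield a subsequence with $1_{\widetilde E_{\nu_k}}\to 1_E$ in measure — weak-* limits of indicator functions need not be indicators (think of finely oscillating sets inside a fixed ball), and no subsequence need converge in measure. Likewise, a weakly convergent sequence need not have an a.e.\ convergent subsequence, so your ``dominated convergence after passing to an a.e.\ convergent subsequence'' alternative is not available, and the Radon--Riesz route requires the norm convergence $\|\widetilde f_{\nu_k}\|_p\to\|f\|_p$ that you have not established. The paper closes both gaps with a specific convexity mechanism that your proposal has no substitute for: using Proposition \ref{Lorentz} and the exact formula of Lemma \ref{itcrowd} for $\|\tfrac12 1_{E_\nu}+\tfrac12 1_{E_\mu}\|_{\mc L}$, the strict concavity of $t\mapsto t^{1/p}$ forces $|E_\nu\Delta E_\mu|\to0$ (so the sets are Cauchy in measure), and a related superlevel-set argument plus the parallelogram law in $L^2$ upgrades the weak convergence of $f_\nu$ to $\|f_\nu-f_\mu\|_p\to0$. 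Without an argument of this kind (or some replacement), your proof does not reach either conclusion of the theorem.
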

The conditional existence of maximizers is a direct consequence. A simplified outline of the argument is as follows. 
\begin{enumerate}
\item Begin by proving basic principles of concentration compactness: ``no slacking" and ``cooperation" (see \textsection{\ref{conccpct}}). 

\item If $|f|\le 1_{E}$ with $|E|=1$ satisfies $\|\widehat{f}\|_{q}\ge \eta$ for $\eta>0$, then $f$ satisfies a related Young's convolution inequality: for appropriate $\g,r,s$, $\||f|^\g*|f|^\g\|_r\ge \eta^s$. 

\item By continuum analogues of theorems of Balog-Szemer\'edi and Fre\u{\i}man, $|f|\le 1_E$ with $|E|=1$ satisfying $\||f|^\g*|f|^\g\|_r\ge \eta^s$ must place a portion of its $L^p$ mass on a continuum multiprogression of controlled rank and Lebesgue measure. 

\item Combine concentration compactness  principles with the specific additive structure we have from the relation to Young's convolution inequality to conclude that a function $|f|\le 1_E$ satisfying $\|\widehat{f}\|_q\ge (1-\delta){\bf{B}}_{q,d}|E|^{1/p}$ for small $\delta>0$ is mostly supported on a multiprogression of controlled rank and size.  

\item By precomposing a near-extremizer with an affine transformation $\mc{T}$, we can change variables to guarantee that the continuum multiprogression is mostly contained in $\Z^d\times[-\delta,\delta]^d$. We must guarantee that the Jacobian of $\mc{T}$ is bounded below since otherwise we could trivially collapse any bounded set to a small neighborhood of the origin. 

\item The Fourier transform of a function living on $\Z^d\times [-\delta,\delta]^d$ decomposes into a discrete and a continuous Fourier transform, and a near-extremizer for (\ref{eqn:main}) must be a near-extremizer of each step of the decomposition. Since near-extremizers of the discrete Fourier transform must mostly be supported on a single $n\in\Z^d$, this gives extra structure. We prove that the only multiprogression structure which is favorable at each step of the decomposition is one mostly contained in a single convex set $[-\delta,\delta]$. 

\item If $|f|\le 1_E$ is a near-extremizer, then $\widehat{f}|\widehat{f}|^{q-2}$ is a near-extremizer of a related dual inequality (see \textsection{\ref{dualsect}}). The above reasoning may also be carried out for this dual inequality, except for step (6), which may or may not be true in the dual setting. If step (6) holds in the dual setting, we conclude that a significant portion of the $L^p$ mass of $f$ and $\widehat{f}$ must be localized to ellipsoids (or other convex sets) of controlled size.

\item Via a composition with an affine transformation and modulation by a character, we can assume that $f$ and $\widehat{f}$ are localized (respectively) to the unit ball $\B$ and and ellipsoid $\mc{E}$ centered at the origin. We prove a reversed uncertainty bound: $|\mc{E}||\B|\le C$ and furthermore $\mc{E}\subset C\B$ for an appropriate $C>0$. 

\item It follows that for any sequence of function $|f_\nu|\le 1_{E_\nu}$ with $|E_\nu|\in\R^+$ and\newline  $\|\widehat{f_\nu}\|_q|E_\nu|^{-1/p}\to\ {\bf{B}}_{q,d}$, after $(f_\nu,E_\nu)$ is renormalized to $(F_\nu,A_\nu)$ by appropriate symmetries of the inequality, we have weakly convergent subsequences of $F_\nu$ and $1_{A_\nu}$. Finally, we get $L^p$ convergence via a convexity argument involving the $\|\cdot\|_{\mc{L}}$ norm. 

\end{enumerate}

This material is based upon work supported by the National Science Foundation Graduate Research Fellowship under Grant No. DGE 1106400.

\section{Results in terms of the Lorentz space $L(p,1)$\label{Lorentzdiscussion}}

There are many quasinorms which induce the same topology on $L(p,q)$ spaces. For the special case of $p>1$ and $q=1$, we will show that our extremization problem can be phrased using various quasinorms (and one norm defined by Calder\'{o}n) on $L(p,1)$. Let ${\bf{B}}_{q,d}$ be as before.

\begin{definition} Let $d\ge 1$. Define $\|f\|_{\mc{L}}$ for a measurable function $f:\R^d\to\R$ by
\[ \|f\|_{\mc{L}}=\inf\{\|(a_n)\|_{\l^1}:\,|f|=\sum_n a_n|E_n|^{-1/p}1_{E_n},\,a_n\ge 0,\,|E_n|<\infty\}.\]

\end{definition}

The following definitions \ref{deff*}, \ref{defp1*}, and \ref{defp1} are from Chapter V, \textsection{}3 in \cite{steinweiss}. 

\begin{definition}\label{deff*} Let $d\ge 1$. Define $f^*$ for $t>0$ by 
\[ f^*(t)=\inf\{r: |\{x:|f(x)|>r\}|\le t\}. \]

\end{definition}

\begin{definition}\label{defp1*}  Let $d\ge 1$, $1\le p< \infty$, $q$ the conjugate of $p$. Define $\|f\|^*_{p1}$ for a measurable function $f:\R^d\to\R$ by  
\[ \|f\|_{p1}^*= \frac{1}{p}\int_0^\infty t^{-1/q}f^*(t)dt.  \]
\end{definition}

\begin{definition}\label{defp1}   Let $d\ge 1$, $1\le p< \infty$, $q$ the conjugate of $p$. Define $\|f\|_{p1}$ for a measurable function $f:\R^d\to\R$ by 
\[ \|f\|_{p1}= \frac{1}{p}\int_0^\infty t^{-1/q-1}\int_0^tf^*(u)dudt. \]
\end{definition}

\begin{definition} Let $d\ge 1$, $1\le p<\infty$. The space $L(p,1)$ is defined as all measurable functions $f:\R^d\to\C$ satisfying $ \|f\|_{p1}^*<\infty$.
\end{definition}

See the appendix for the relationships between $\|\cdot\|_{\mc{L}}$, $\|\cdot\|_{p1}^*$, and $\|\cdot\|_{p1}$, and that they generate  the same topology on $L(p,1)$. In particular, it is proved that $\|f\|_{\mc{L}}=\|f\|_{p1}^*$ for all measurable $f:\R^d\to\C$ (where one quantity is infinite if and only if the other quantity is as well).

\begin{proposition} \label{Lorentz} For $d\ge 1$, $q\in(2,\infty)$, and $p$ the dual exponent to $q$, 
\[ {\bf{B}}_{q,d}=\sup_{f\in{L^p}}\frac{\|\widehat{f}\|_q}{\|f\|_{\mc{L}}}.  \]
\end{proposition}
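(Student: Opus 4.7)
The plan is to prove the two matching inequalities that establish the identity.

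For the direction $\sup_f \|\widehat f\|_q/\|f\|_{\mc L}\le {\bf{B}}_{q,d}$, I would fix $f\in L(p,1)$ with $\|f\|_{\mc L}<\infty$ and any admissible representation $|f|=\sum_n a_n|E_n|^{-1/p}1_{E_n}$ from the definition of $\|f\|_{\mc L}$. Setting $h_n(x):=e^{i\arg f(x)}1_{E_n}(x)$ (with $\arg f(x):=0$ on $\{f=0\}$) gives $|h_n|\le 1_{E_n}$ and $f=\sum_n a_n|E_n|^{-1/p}h_n$ pointwise almost everywhere. By the definition of ${\bf{B}}_{q,d}$ applied to each $h_n$, $\|\widehat{h_n}\|_q\le {\bf{B}}_{q,d}|E_n|^{1/p}$. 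The triangle inequality in $L^q$ then yields
\[ \|\widehat f\|_q\le \sum_n a_n|E_n|^{-1/p}\|\widehat{h_n}\|_q\le {\bf{B}}_{q,d}\sum_n a_n, \]
and taking the infimum over representations gives $\|\widehat f\|_q\le {\bf{B}}_{q,d}\|f\|_{\mc L}$.

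For the reverse inequality ${\bf{B}}_{q,d}\le \sup_f \|\widehat f\|_q/\|f\|_{\mc L}$, it suffices to show that $\|f\|_{\mc L}\le |E|^{1/p}=\|1_E\|_p$ whenever $|f|\le 1_E$ with $|E|<\infty$; then $\|\widehat f\|_q/\|1_E\|_p\le \|\widehat f\|_q/\|f\|_{\mc L}$ for any such pair, and supping over $(f,E)$ on the left produces the bound. To prove $\|f\|_{\mc L}\le |E|^{1/p}$, I would appeal to the appendix identity $\|f\|_{\mc L}=\|f\|_{p1}^*=\tfrac1p\int_0^\infty t^{-1/q}f^*(t)\,dt$: the hypothesis $|f|\le 1_E$ forces $f^*(t)\le 1$ for all $t$ and $f^*(t)=0$ for $t\ge |E|$, so
\[ \|f\|_{p1}^*\le \tfrac1p\int_0^{|E|}t^{-1/q}\,dt=|E|^{1/p}, \]
using $1-1/q=1/p$. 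A self-contained alternative is to approximate $|f|$ from below by simple functions $g_n\uparrow|f|$, decompose each as $g_n=\sum_k(c_k-c_{k+1})1_{A_k}$ along its nested superlevel sets $A_k\subset E$ (with $c_k\downarrow 0$), observe $\sum_k(c_k-c_{k+1})|A_k|^{1/p}\le c_1|E|^{1/p}\le |E|^{1/p}$, and pass to the limit using lower semicontinuity of $\|\cdot\|_{\mc L}$ under monotone pointwise convergence.

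The main obstacle is modest and essentially bookkeeping. Direction one requires promoting a pointwise decomposition of the nonnegative $|f|$ into a decomposition of the complex-valued $f$ so that the triangle inequality for the $L^q$ norm of $\widehat f$ can be deployed; this is handled cleanly by absorbing the phase $e^{i\arg f}$ into each indicator. Direction two reduces entirely to the quasinorm inequality $\|f\|_{\mc L}\le \|1_E\|_p$, and the cleanest route is the appendix identification of $\|\cdot\|_{\mc L}$ with $\|\cdot\|_{p1}^*$, which converts the estimate into an elementary computation with $f^*$.
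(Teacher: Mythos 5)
Your proposal is correct, and its first half is essentially the paper's argument: decompose according to a representation $|f|=\sum_n a_n|E_n|^{-1/p}1_{E_n}$, apply the defining inequality for ${\bf{B}}_{q,d}$ to each piece, and use the triangle inequality; your explicit absorption of the phase into $h_n=e^{i\arg f}1_{E_n}$ is a cleaner way of writing what the paper does (the paper's displayed bound with $\|\widehat{1_{E_n}}\|_q$ tacitly relies on the same device, since ${\bf{B}}_{q,d}$ allows arbitrary unimodular phases under the majorant). For the reverse inequality the routes genuinely differ. The paper argues via density of simple functions and then estimates $\sum_n|a_n||A_n|^{1/p}$ for the disjoint-level-set decomposition $f=\sum_n a_n1_{A_n}$ directly against $|E|^{1/p}$; as written that intermediate estimate is delicate (for a function taking many values close to $1$ on a partition of $E$ into $N$ pieces one has $\sum_n|a_n||A_n|^{1/p}\approx N^{1/q}$, so the disjoint decomposition does not itself witness $\|f\|_{\mc{L}}\le|E|^{1/p}$, and one really needs a nested, layer-cake-type decomposition as in Lemma \ref{itcrowd}). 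You instead reduce everything to the single monotonicity inequality $\|f\|_{\mc{L}}\le|E|^{1/p}$ and prove it through the appendix identification $\|f\|_{\mc{L}}=\|f\|_{p1}^*$ together with the elementary bound $f^*\le 1_{[0,|E|)}$, which is clean and robust (to invoke Lemma \ref{equivnorms} you should note, via Corollary \ref{equivnorms2}, that $\|f\|_{p1}^*<\infty$ already forces $\|f\|_{\mc{L}}<\infty$). What the paper's route buys is self-containedness within the proposition at the cost of a fragile computation; what yours buys is a transparent proof resting on the rearrangement formula, with the only caveat that your stated ``self-contained alternative'' quietly needs a justification of lower semicontinuity of $\|\cdot\|_{\mc{L}}$ under monotone limits, which is exactly the kind of limiting argument carried out in the appendix.
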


\begin{proof} Let $|f|=\sum_n a_n |E_n|^{-1/p}1_{E_n}$ where $a_n\ge 0$ and $|E_n|<\infty$. Then $\|f\|_p\le \sum_n a_n$, so $\|f\|_p\le \|f\|_{\mc{L}}$. By the Hausdorff-Young inequality, the constant  $A_{\mc{L}}$ defined by 
\begin{equation}
    A_{\mc{L}}:=\sup_{f\in L^p}\frac{\|\widehat{f}\|_q}{\|f\|_{\mc{L}}} \label{eq:dualmain}
\end{equation}
is finite. 

We want to show that $ {\bf{B}}_{q,d}:=  \sup_{|E|<\infty}\sup_{|f|\prec 1_{E}}\frac{\|\widehat{f}\|_q}{|E|^{1/p}}=\sup_{f\in{L^p}}\frac{\|\widehat{f}\|_q}{\|f\|_{\mc{L}}}=:A_{\mc{L}} $.

If $|f|=\sum a_n|E_n|^{-1/p}1_{E_n}$ with $a_n\ge 0$, $|E_n|<\infty$,  then 
\[ \frac{\|\widehat{f}\|_q}{\sum |a_n|}\le \frac{\sum|a_n||E_n|^{-1/p}\|\widehat{1_{E_n}}\|_q}{\sum|a_n|}\le {\bf{B}}_{q,d} ,\]
so $A_{\mc{L}}\le {\bf{B}}_{q,d}$.

For the other direction, since simple functions are dense in $L^p(\R^d)$, it suffices to consider $f=\sum a_n1_{A_n}$ where $A_n$ are disjoint and $f$ is majorized by the indicator of a Lebesgue measurable set $E$ of size one. Then $\sum |a_n||A_n|^{1/p}\le |E|^{1/p}\sum |a_n|=|E|^{1/p}\|f\|_1\le |E|^{\frac{1}{p}+1}=1$. Rearranged, this means  
\[ \|\widehat{f}\|_q=\frac{\|\widehat{f}\|_q}{|E|^{1/p}}\le \frac{\|\widehat{f}\|_q}{\sum|a_n||A_n|^{1/p}} \le \frac{\|\widehat{f}\|_q}{\|f\|_{\mc{L}}},\]
so ${\bf{B}}_{q,d}\le A_{\mc{L}}$. 

\end{proof}

\begin{lemma}\label{extstruct} If $f\in L(p,1)$ satisfies ${\bf{B}}_{q,d}=\|f\|_{\mc{L}}^{-1}\|\widehat{f}\|_q$, then 
\[ f=ae^{i\p}1_E\]
for some scalar $a\in \R^+$, Lebesgue measurable function $\p:\R^d\to\R$, and a Lebesgue measurable set $E$ of finite measure. 
\end{lemma}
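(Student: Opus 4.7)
The plan is to use the layer-cake representation of $|f|$ together with the equality case of Minkowski's integral inequality in $L^q$. Strict convexity of $L^q$ (for $1<q<\infty$) will then force $|f|$ to be a constant multiple of an indicator function, and the statement follows by extracting the phase.

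By the appendix's identity $\|f\|_{\mc{L}}=\|f\|_{p1}^*$ together with a Fubini computation, one has
\[ \|f\|_{\mc{L}} = \int_0^\infty |E_s|^{1/p}\,ds, \qquad E_s := \{x\in\R^d:|f(x)|>s\}. \]
Fix a measurable choice of phase $\p(x):=\arg f(x)$ (defined arbitrarily on $\{f=0\}$), so that the layer-cake identity lifts to $f(x)=\int_0^\infty e^{i\p(x)}1_{E_s}(x)\,ds$. Applying the Fourier transform (justified by Fubini after truncation), the pointwise complex triangle inequality, Minkowski's integral inequality in $L^q$, and the definition of ${\bf{B}}_{q,d}$ to each slice $|e^{i\p}1_{E_s}|\le 1_{E_s}$ yields
\begin{equation*}
\|\widehat{f}\|_q \;\le\; \Big\|\int_0^\infty \big|\widehat{e^{i\p}1_{E_s}}\big|\,ds\Big\|_q \;\le\; \int_0^\infty \big\|\widehat{e^{i\p}1_{E_s}}\big\|_q\,ds \;\le\; {\bf{B}}_{q,d}\int_0^\infty |E_s|^{1/p}\,ds \;=\; {\bf{B}}_{q,d}\|f\|_{\mc{L}}.
\end{equation*}

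Since $f$ is an extremizer, equality holds at every step. Equality in the first inequality is the integral triangle equality for complex scalars and pins the phase of $\widehat{e^{i\p}1_{E_s}}(\xi)$ to be independent of $s$ for a.e. $\xi$. Equality in the second inequality, by strict convexity of $L^q$ applied to the nonnegative integrand, forces the magnitudes to factor as $|\widehat{e^{i\p}1_{E_s}}(\xi)|=\lambda(s)g(\xi)$ for some measurable $\lambda:(0,\infty)\to[0,\infty)$ and nonnegative $g\in L^q$. Combining the two gives $\widehat{e^{i\p}1_{E_s}}(\xi)=\lambda(s)h(\xi)$ for a single $h\in L^q$ absorbing the common phase.

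Injectivity of the Fourier transform then yields, for any $s_1,s_2>0$ with $\lambda(s_2)>0$,
\[ e^{i\p}1_{E_{s_1}} = \frac{\lambda(s_1)}{\lambda(s_2)}\, e^{i\p}1_{E_{s_2}}\quad\text{a.e.}, \]
and taking absolute values shows $1_{E_{s_1}}=(\lambda(s_1)/\lambda(s_2))\,1_{E_{s_2}}$ a.e. Since the left side is $\{0,1\}$-valued, either $E_{s_1}=E_{s_2}$ modulo null sets with $\lambda(s_1)=\lambda(s_2)$, or $|E_{s_1}|=0$ with $\lambda(s_1)=0$. Therefore there exist a single set $E$ and a threshold $s_0>0$ with $E_s=E$ (mod null) for $s\in(0,s_0)$ and $|E_s|=0$ for $s\ge s_0$. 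Equivalently $|f|=s_0\cdot 1_E$ a.e., and setting $a=s_0$ recovers $f=a e^{i\p}1_E$; finiteness of $|E|$ follows from $\|f\|_{\mc{L}}=a|E|^{1/p}<\infty$.

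The main subtlety is justifying the factorization $\widehat{e^{i\p}1_{E_s}}(\xi)=\lambda(s)h(\xi)$ from the vector-valued equality case of Minkowski's inequality for complex $L^q$; this is handled by separating the pointwise complex triangle step (which aligns phases across $s$) from the classical strict-convexity argument applied to the resulting nonnegative integrand.
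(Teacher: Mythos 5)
Your proof is correct and follows essentially the same route as the paper's: the layer-cake representation, the chain $\|\widehat{f}\|_q\le\int_0^\infty\|\widehat{e^{i\p}1_{E_s}}\|_q\,ds\le {\bf{B}}_{q,d}\|f\|_{p1}^*={\bf{B}}_{q,d}\|f\|_{\mc{L}}$, and the equality case of Minkowski's integral inequality yielding the factorization $\widehat{e^{i\p}1_{E_s}}(\xi)=\lambda(s)h(\xi)$. Your only deviation is cosmetic: you separate the phase-alignment and magnitude-proportionality steps and then compare slices via injectivity of the Fourier transform, whereas the paper inverts $h$ and argues pointwise on $\{(x,t):|f(x)|>t\}$; both give that $|f|$ is constant on its support.
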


\begin{proof} By Lemma \ref{equivnorms}, we also have that ${\bf{B}}_{q,d}=(\|f\|_{p1}^*)^{-1}\|\widehat{f}\|_q$. Let $E=\{(y,s):|f(y)|>s \} $. Let $e^{i\p}=f/|f|$ so we can use the layer cake representation
\[    f(x)=e^{i\p(x)}\int_0^\infty1_{E}(x,t)dt   .  \]
Then 
\begin{align*}  
\|\widehat{f}\|_q &= \left\|\left(\int_0^\infty e^{i\p(x)}1_{E}(x,t)dt\right)^{\widehat{\,}}\,\,\right\|_q = \left\| \int_0^\infty \widehat{e^{i\p}1_{E}}(\xi,t)dt\right\|_q  \\ 
&\le \int_0^\infty \|\widehat{e^{i\p}1_E}(\xi,t)\|_q dt \\
&\le \int_0^\infty {\bf{B}}_{q,d}\left|\{x:|f(x)|>t\}\right|^{1/p}dt \\
&={\bf{B}}_{q,d} \int_0^\infty \int_0^{|\{x:|f(x)|>t\}|}\frac{1}{p}u^{-1/q}du dt \\
&={\bf{B}}_{q,d} \int_0^{|\{x:|f(x)|>0\}|} \int_0^{f^*(u)}\frac{1}{p}u^{-1/q} dt du={\bf{B}}_{q,d}\|f\|_{p1}^*.
\end{align*} 

Since ${\bf{B}}_{q,d}=(\|f\|_{p1}^*)^{-1}\|\widehat{f}\|_q$, the above sequence of inequalities are actually equalities. Equality in the Minkowski integral inequality implies that for a.e. $(\xi,t)\in \R^d\times\R^+$,
\[ \widehat{e^{i\p}1_E}(\xi,t)=h(\xi)g(t)\]
for some measurable functions $h,g$. 
Since $e^{i\p}1_E(x,t)\in L^{2}$, in particular, $h$ and $\widecheck{h}$ in $L^2$. 
\[ 1_E(x,t)=e^{-i\p(x)}\widecheck{h}(x)g(t). \]
But then for every $(x,t)$ satisfying $|f(x)|>t$, we have 
\[ e^{-i\p(x)}\widecheck{h}(x)g(t)=1. \]
Suppose $|f(x)|>|f(y)|>0$. Then for all $0\le t<f(y)$, 
\[ e^{-i\p(x)}\widecheck{h}(x)=g(t)^{-1}=e^{i\p(y)}\widecheck{h(y)} ,\]
which is a contradiction unless $|f(x)|$ is constant on its support. Thus $f$ takes the form $ae^{i\p}1_S$ where $S=\text{supp }f$ and $a\in \R^+$.  
\end{proof}

The existence corollary to Theorem \ref{precompactness} in terms of Lorentz norms is 
\begin{corollary}\label{maincor}Let $d\ge1$, $p\in(1,2)$, $q$ the conjugate exponent of $p$. First, we have $\frac{\textbf{B}_{q,d}}{q}=\sup_{0\not=g\in L(p,1)}\|g\|_{p1}^{-1}\|\widehat{g}\|_{q}$. Second, if $f\in L(p,1)$ satisfies $\frac{{\bf{B}}_{q,d}}{q}=\|f\|_{p1}^{-1}\|\widehat{f}\|_q$, then 
\[ f=ae^{i\p}1_E\]
for some scalar $a\in \R^+$, Lebesgue measurable function $\p:\R^d\to\R$, and a Lebesgue measurable set $E$ of finite measure. 
\end{corollary}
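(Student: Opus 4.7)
The plan is to reduce both parts of the corollary to results already established in the paper, via a single scaling identity relating the Calder\'on norm $\|\cdot\|_{p1}$ of Definition~\ref{defp1} to the quasinorm $\|\cdot\|_{p1}^*$ of Definition~\ref{defp1*}. First I would establish $\|f\|_{p1} = q\|f\|_{p1}^*$ by applying Fubini to exchange the order of integration in Definition~\ref{defp1} and evaluating $\int_u^\infty t^{-1/q-1}\,dt = q\,u^{-1/q}$; this gives
\[
\|f\|_{p1} \;=\; \frac{1}{p}\int_0^\infty f^*(u)\int_u^\infty t^{-1/q-1}\,dt\,du \;=\; \frac{q}{p}\int_0^\infty u^{-1/q}f^*(u)\,du \;=\; q\|f\|_{p1}^*.
\]

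With this in hand, part (1) is immediate: Proposition~\ref{Lorentz} together with the appendix identity $\|g\|_{\mc{L}} = \|g\|_{p1}^*$ (Lemma~\ref{equivnorms}) gives $\textbf{B}_{q,d} = \sup_{0\neq g \in L(p,1)}\|\widehat{g}\|_q/\|g\|_{p1}^*$, and substituting $\|g\|_{p1}^* = q^{-1}\|g\|_{p1}$ then rearranging yields $\textbf{B}_{q,d}/q = \sup_{0\neq g \in L(p,1)}\|\widehat{g}\|_q/\|g\|_{p1}$.

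For part (2), suppose $f\in L(p,1)$ attains $\textbf{B}_{q,d}/q = \|\widehat{f}\|_q/\|f\|_{p1}$. Multiplying through by $q$ and using $\|f\|_{p1} = q\|f\|_{p1}^* = q\|f\|_{\mc{L}}$ gives $\textbf{B}_{q,d} = \|\widehat{f}\|_q/\|f\|_{\mc{L}}$, so $f$ is an extremizer of the ratio treated in Lemma~\ref{extstruct}, which concludes that $f = ae^{i\varphi}1_E$ for some $a\in\R^+$, measurable $\varphi:\R^d\to\R$, and measurable $E\subset\R^d$ of finite measure. There is no serious obstacle here; the corollary is a direct reparametrization of Proposition~\ref{Lorentz} and Lemma~\ref{extstruct} through the Fubini calculation above, and all substantive content is inherited from those earlier results.
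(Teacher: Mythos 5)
Your argument is correct, and it takes a genuinely shorter route than the paper's. The engine is your exact identity $\|f\|_{p1}=q\|f\|_{p1}^*$, which is valid: the integrand is nonnegative, so Tonelli applies, and $\int_u^\infty t^{-1/q-1}\,dt=qu^{-1/q}$ since $q>1$; this is strictly stronger than the two-sided bound (\ref{coffee}) that the paper imports from Stein--Weiss, and it reduces both halves of the corollary to earlier results. For part (1) you should add one sentence noting that passing from the supremum over $f\in L^p$ in Proposition \ref{Lorentz} to the supremum over $0\not=g\in L(p,1)$ is harmless: by Corollary \ref{equivnorms2} any $f\in L^p\setminus L(p,1)$ has $\|f\|_{\mc{L}}=\infty$ and so contributes ratio $0$, while $L(p,1)\subset L^p$ because $\|g\|_p\le\|g\|_{\mc{L}}$. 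For part (2), your observation that an extremizer of $\|\widehat{f}\|_q/\|f\|_{p1}$ is literally an extremizer of $\|\widehat{f}\|_q/\|f\|_{\mc{L}}$ lets Lemma \ref{extstruct} do all the work. The paper proceeds differently: it gets the lower bound in part (1) from the one-sided inequality $\|g\|_{p1}\le q\|g\|_{p1}^*$, proves the upper bound by re-running the layer-cake/Minkowski argument of Lemma \ref{extstruct} with the $\|\cdot\|_{p1}$ norm (in effect using $\|1_A\|_{p1}=q|A|^{1/p}$), and then repeats the equality analysis in Minkowski's integral inequality to obtain the structure of extremizers. Your approach buys economy---no duplication of the Minkowski equality analysis---at the cost of the single extra observation that Hardy's inequality becomes an identity when the second Lorentz index is $1$; the paper's approach avoids invoking that exact identity but redoes the computation and the equality case by hand. (Neither your write-up nor the statement itself addresses existence of an extremizer, which the paper's proof appends via Theorem \ref{precompactness}; that is not required for the statement as given.)
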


See \textsection{\ref{precompactnesssect}} for the proof of the Corollary \ref{maincor}.

\section{The dual inequality \label{dualsect}}

Recall the definition of the optimal constant ${\bf{B}}_{q,d}$
\[ {\bf{B}}_{q,d}=\sup_{0<|E|<\infty}\sup_{|f|\le 1_E}\frac{\|\widehat{f}\|_q}{|E|^{1/p}}. \]

By exploiting $L^p$ duality and Plancherel's theorem, we also have the expressions: 
\begin{align*}  
{\bf{B}}_{q,d}& =\sup_{|E|<\infty}\sup_{|f|\le 1_E}\sup_{\|g\|_p\le 1}\frac{|\langle \widehat{f},g\rangle |}{|E|^{1/p}}=\sup_{|E|<\infty}\sup_{\|g\|_p\le 1}\frac{\langle 1_E,|\widehat{g}|\rangle |}{|E|^{1/p}},
\end{align*} 
the last of which motivates the following definition.

\begin{definition} \label{defdual} Let $d\ge 1$ and $q\in[1,\infty)$, and $p$ be the conjugate exponent to $q$. Define the norm $\|\cdot\|_{q,*}$ of a function $g\in L^q(\R^d)$ to be 
\[ \|g\|_{q,*}=\sup_{0<|E|<\infty}|E|^{-1/p}\int_E|g| \]
where the supremum is taken over Lebesgue measurable subsets $E\subset\R^d$ of positive, finite measure.  
\end{definition}

Note that by H\"{o}lder's inequality, if $g\in L^q$, then $\|g\|_{q,*}\le \|g\|_q<\infty$. Thus for $f\in L^p$ with $p\in(1,2)$ and $q$ the conjugate exponent, 
\[ \|\widehat{f}\|_{q,*}\le \|f\|_{p}\]
is a corollary of the Hausdorff-Young inequality.

\begin{proposition} \label{dualineq} For $d\ge 1$, $q\in(2,\infty)$, and $p$ the dual exponent to $q$, 
\[ {\bf{B}}_{q,d}=\sup_{\|f\|_p\le 1}\|\widehat{f}\|_{q,*} . \]
Furthermore, if $|f|\le 1_E$, $|E|<\infty$ satisfies $\|\widehat{f}\|_q\ge (1-\delta){\bf{B}}_{q,d}|E|^{1/p}$ for some $\delta>0$, then
\[   \|(|\widehat{f}|^{q-2}\widehat{f})^{\widecheck{\,\,}}\|_{q,*}\ge (1-\delta)^q{\bf{B}}_{q,d}\||\widehat{f}|^{q-2}\widehat{f}\|_p.     \]
\end{proposition}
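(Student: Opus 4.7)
The plan is to prove both claims by pure duality computations; no structure beyond Hausdorff--Young, Plancherel, and Definition \ref{defdual} is needed.

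For the first equality I would start from the chain the paper has just derived,
\[ {\bf{B}}_{q,d}=\sup_{|E|<\infty}\sup_{\|g\|_p\le 1}\frac{\langle 1_E,|\widehat{g}|\rangle}{|E|^{1/p}}, \]
and simply exchange the two suprema (both range over independent variables and the integrand is nonnegative). For each fixed $g$ the inner supremum equals $\|\widehat{g}\|_{q,*}$ directly from Definition \ref{defdual}, and the claim follows.

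For the second claim, the natural dual witness is $h=|\widehat{f}|^{q-2}\widehat{f}$, the function that achieves equality in the $L^p$-$L^q$ duality for $\widehat{f}$. Two short computations, both using the conjugate-exponent identity $(q-1)p=q$, give
\[ \|h\|_p=\|\widehat{f}\|_q^{q-1}\qquad\text{and}\qquad \int\widehat{f}\,\overline{h}\,d\xi=\int|\widehat{f}|^q\,d\xi=\|\widehat{f}\|_q^q. \]
Since $|f|\le 1_E$ with $|E|<\infty$ places $f$ in every $L^r$ and since $\widehat{f}\in L^\infty\cap L^q$ places $h$ in $L^\infty\cap L^p$, both $f$ and $\widecheck{h}$ lie in $L^2$, so Plancherel transports the pairing to the physical side: $\int\widehat{f}\,\overline{h}=\int f\,\overline{\widecheck{h}}$. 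Applying $|f|\le 1_E$ and then Definition \ref{defdual} in succession yields
\[ \|\widehat{f}\|_q^q=\Bigl|\int f\,\overline{\widecheck{h}}\,dx\Bigr|\le\int_E|\widecheck{h}|\,dx\le|E|^{1/p}\|\widecheck{h}\|_{q,*}. \]
Inserting the near-extremizer hypothesis in the form $|E|^{1/p}\le\|\widehat{f}\|_q/((1-\delta){\bf{B}}_{q,d})$ and dividing produces
\[ \|\widecheck{h}\|_{q,*}\ge(1-\delta){\bf{B}}_{q,d}\|\widehat{f}\|_q^{q-1}=(1-\delta){\bf{B}}_{q,d}\|h\|_p, \]
which is stronger than the stated bound since $(1-\delta)^q\le(1-\delta)$ for $\delta\in(0,1)$ and $q>1$.

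I do not foresee a real obstacle here: the entire argument is the observation that the equality case of $L^p$-$L^q$ duality for $\widehat{f}$ combines with the constraint $|f|\le 1_E$ and Plancherel to automatically upgrade into a near-extremality statement for the dual inequality (with the norm $\|\cdot\|_{q,*}$ in place of $\|\cdot\|_p$). The only technical point is the justification of Plancherel, which reduces to the $L^2$ inclusion noted above. The conceptual payoff is that any subsequent structural analysis of $f$ can be mirrored on the Fourier side by applying the same analysis to $\widecheck{h}$, which is what step (7) of the outline in the introduction requires.
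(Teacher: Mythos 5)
Your argument is correct and is in substance the same as the paper's: both rest on Parseval/Plancherel, H\"older, and the dual witness $h=|\widehat{f}|^{q-2}\widehat{f}$, whose relevant identities $\|h\|_p=\|\widehat{f}\|_q^{q-1}$ and $\int\widehat{f}\,\overline{h}=\|\widehat{f}\|_q^q$ you verify via $(q-1)p=q$, and your $L^2$ justification of the pairing transport ($f\in L^2$, $h\in L^\infty\cap L^p\subset L^2$) is adequate. The small differences: for the first equality the paper argues the two inequalities separately, getting the upper bound by pairing $\widehat f$ against $e^{i\varphi}1_E$ (Plancherel plus H\"older) and the lower bound as a byproduct of the second claim, whereas you obtain it by exchanging suprema in the duality chain ${\bf{B}}_{q,d}=\sup_{|E|<\infty}\sup_{\|g\|_p\le1}|E|^{-1/p}\langle 1_E,|\widehat{g}|\rangle$; that exchange is legitimate, but note that the chain itself is stated in the paper only as motivation, so in a self-contained write-up you should include its one-line verification (duality $\|\widehat f\|_q=\sup_{\|g\|_p\le1}|\langle\widehat f,g\rangle|$, Parseval, and optimizing in $f$, plus the harmless reflection turning $\widecheck{g}$ into $\widehat{g}$) — this verification is exactly the computation the paper performs. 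For the second claim, the paper bounds $\int_E|\widecheck{h}|\ge\int|\widehat f|^q\ge(1-\delta)^q{\bf{B}}_{q,d}^q|E|^{q/p}$ and then converts $|E|$ into $\|h\|_p$ via $\|h\|_p=\|\widehat f\|_q^{q/p}\le{\bf{B}}_{q,d}^{q/p}|E|^{q/p^2}$, which is what produces the exponent $q$ on $(1-\delta)$; your rearrangement uses the hypothesis only once, through $|E|^{1/p}\le\|\widehat f\|_q/((1-\delta){\bf{B}}_{q,d})$, and therefore yields the slightly sharper factor $(1-\delta)$, which indeed implies the stated $(1-\delta)^q$ bound for $\delta\in(0,1)$. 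So the proposal is sound, and its only real divergence from the paper is this more economical bookkeeping in the second claim.
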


\begin{proof} Let $f\in L^p(\R^d)$. Consider a Lebesgue measurable set $E\subset \R^d$ of finite measure such that $|\widehat{f}|\not=0$ a.e. on $E$ and write $\widehat{f}=e^{-i\p}|\widehat{f}|$ for a real-valued phase function $\p$ equal to $0$ off of the support of $\widehat{f}$. Using Plancherel's theorem and H\"{o}lder's inequality, we then have  

\begin{align*}
    |E|^{-1/p}\int_E |\widehat{f}|&=|E|^{-1/p}\int 1_Ee^{i\p}\widehat{f} = |E|^{-1/p}\int \widehat{e^{i\p}1_E} {f}\\
    &\le |E|^{-1/p} \|\widehat{e^{i\p}1_E}\|_q\|f\|_p  \\
    &\le {\bf{B}}_{q,d} \|f\|_p,
\end{align*}
so that $\underset{{\|f\|_p\le 1}}\sup \|\widehat{f}\|_{q,*}\le {\bf{B}}_{q,d}$.

Now suppose that for $|f|\le 1_E$, $|E|<\infty$, and $\delta>0$ we have $\|\widehat{f}\|_q\ge (1-\delta){\bf{B}}_{q,d}|E|^{1/p}$. Then $|\widehat{f}|^{q-2}\widehat{f}\in L^p$ since $\||\widehat{f}|^{q-2}\widehat{f}\|_p^p==\|\widehat{f}\|_q^q$. Then 
\begin{align*}
\int_{E}\left|\left({|\widehat{f}|^{q-2}\widehat{f}}\right)^{\widecheck{\,\,}} \right|&\ge  \int |{f}|\left|\left({|\widehat{f}|^{q-2}\widehat{f}}\right)^{\widecheck{\,\,}} \right| \\
&\ge \left|\int {f}\,\overline{\left({|\widehat{f}|^{q-2}\widehat{f}}\right)^{\widecheck{\,\,}}} \right| \\
&= \left|\int \widehat{f}\overline{|\widehat{f}|^{q-2}\widehat{f}} \right|=\int |\widehat{f}|^{q}\\
&\ge (1-\delta)^q{\bf{B}}^q_{q,d} |E|^{q/p}.
\end{align*}

Rearranging and using that $\||\widehat{f}|^{q-2}\widehat{f}\|_p=\|\widehat{f}\|_q^{q/p}\le {\bf{B}}_{q,d}^{q/p}|E|^{q/p^2}$, 

\[ |E|^{-1/p} \int_{-E}\left|\left({|\widehat{f}|^{q-2}\widehat{f}}\right)^{\widehat{\,\,}} \right|\ge (1-\delta)^q{\bf{B}}_{q,d}^{q-q/p}\||\widehat{f}|^{q-2}\widehat{f}\|_p = (1-\delta)^q{\bf{B}}_{q,d}\||\widehat{f}|^{q-2}\widehat{f}\|_p,    \]

proving that we can find $g\in L^p$ such that $\|\widehat{g}\|_{q,*}\|g\|_p^{-1}$ is arbitrarily close to ${\bf{B}}_{q,d}$. 

\end{proof}

For $q\in (2,\infty)$ and $p$ the conjugate exponent of $q$, the inequality 
\begin{align}  \|\widehat{g}\|_{q,*} \le {\bf{B}}_{q,d}\|g\|_p .\label{eq:dual} \end{align} 
is amenable to the same analysis as our main inequality $(\ref{eqn:main})$, and each lemma we prove about (\ref{eqn:main}) will have an analogue for this dual inequality.

\section{Quasi-extremal principles \label{conccpct} }

We establish the quasi-extremal principles ``no slacking" and ``cooperation". No slacking guarantees that a near-extremizer is a combination of small parts which must be quasi-extremizers. Cooperation guarantees that these small parts work together in a compatible way (e.g. have nontrivial intersection of supports).

\begin{definition} Let $d\ge 1$, $q\in(2,\infty)$ and $p=q'$. A nonzero function $f$ satisfying $|f|\le 1_E\in L^p$ is a $\delta$-quasi-extremizer for (\ref{eqn:main}) if 
\[ \|\widehat{f}\|_q\ge (1-\delta){\bf{B}}_{q,d}|E|^{1/p}. \]
\end{definition}
By a quasi-extremizer, we mean a $\delta$-quasi-extremizer for some small $\delta>0$. 

\subsection{No slacking}

\begin{lemma}[No slacking] \label{noslacking} For any $p,q\in(1,\infty)$ there exist $c,C_0<\infty$ with the following property. Let $\delta>0$, $|E|<\infty$, $|f|\le 1_E$. Suppose that 
\[ \|\widehat{f}\|_q\ge (1-\delta){\bf{B}}_{q,d}|E|^{1/p}.\]
Suppose that $f=g+h$ where $g=1_Af,\,h=1_Bf$, and $A\cap B=\empty$, and that
\[ |B|\ge C_0\delta |E|.\]
Then 
\[ \|\widehat{h}\|_q\ge c\delta|E|^{1/p}.\]
\end{lemma}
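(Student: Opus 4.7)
I would prove this by a direct triangle inequality argument, using the concavity of $t \mapsto t^{1/p}$ for $p>1$ to extract the required quantitative gain.

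Since $g = 1_A f$ and $|f| \le 1_E$, I may replace $A$ by $A \cap E$ (and $B$ by $B \cap E$) without changing $g$ or $h$, so that $A, B \subset E$ and disjointness gives $|A| \le |E| - |B|$. Because $|g| \le 1_A$ with $|A|<\infty$, the definition of ${\bf{B}}_{q,d}$ immediately yields
\[  \|\widehat{g}\|_q \le {\bf{B}}_{q,d} |A|^{1/p} \le {\bf{B}}_{q,d}(|E|-|B|)^{1/p}.  \]

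The key quantitative step is to convert $(|E|-|B|)^{1/p}$ into a linear gain in $|B|/|E|$. Since $t \mapsto t^{1/p}$ is concave on $[0,\infty)$, its tangent line at $t = |E|$ lies above the graph, giving $(|E|-|B|)^{1/p} \le |E|^{1/p}\bigl(1 - \tfrac{|B|}{p|E|}\bigr)$. Plugging this into the triangle inequality $\|\widehat{h}\|_q \ge \|\widehat{f}\|_q - \|\widehat{g}\|_q$ together with the near-extremal hypothesis on $f$ yields
\[  \|\widehat{h}\|_q \ge {\bf{B}}_{q,d} |E|^{1/p} \Bigl(\frac{|B|}{p|E|} - \delta\Bigr).  \]

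To finish, I would take $C_0 := 2p$, so that the assumption $|B| \ge C_0 \delta |E|$ forces $|B|/(p|E|) \ge 2\delta$ and leaves a surplus of $\delta$ after subtracting the near-extremal loss. Setting $c := {\bf{B}}_{q,d}$ (a finite positive constant depending only on $p,q,d$) produces $\|\widehat{h}\|_q \ge c\delta|E|^{1/p}$, as desired. I do not anticipate any real obstacle: the lemma is a quantitative avatar of the soft observation that a near-extremizer cannot be split into two pieces each of which individually saturates the sharp constant on its own support, and the strict concavity of $t^{1/p}$ is exactly what makes discarding the piece $h$ costly.
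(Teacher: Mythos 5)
Your proof is correct, and it reaches the conclusion by a somewhat more elementary route than the paper. The paper's proof expands at the level of $q$-th powers, using an inequality of the form $\|\widehat{g}+\widehat{h}\|_q^q\le \|\widehat{g}\|_q^q+C\|\widehat{g}\|_q^{q-1}\|\widehat{h}\|_q+C\|\widehat{h}\|_q^q$, normalizes $|E|=1$, and then extracts the gain from $(1-|B|)^{q/p}\le 1-c_p|B|$; you instead use the plain triangle inequality $\|\widehat{h}\|_q\ge\|\widehat{f}\|_q-\|\widehat{g}\|_q$ together with the tangent-line (concavity) bound $(|E|-|B|)^{1/p}\le |E|^{1/p}\bigl(1-\tfrac{|B|}{p|E|}\bigr)$. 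Both arguments hinge on the same mechanism — $\|\widehat{g}\|_q\le {\bf{B}}_{q,d}|A|^{1/p}$ with $|A|\le |E|-|B|$, so discarding $h$ costs an amount linear in $|B|/|E|$ — but your linearization is cleaner, avoids the normalization step and the cross-term inequality, and yields explicit admissible constants $C_0=2p$, $c={\bf{B}}_{q,d}$ (note ${\bf{B}}_{q,d}>0$, so this is a legitimate choice of $c$). One small point of hygiene, which you in fact handle and the paper assumes implicitly: the hypothesis must be read as $A,B$ partitioning $E$ (the paper writes $|E|=|A|+|B|$), since otherwise mass of $B$ outside $E$ contributes to $|B|$ but not to $h$ and the statement would fail; under that reading your replacement of $A,B$ by $A\cap E, B\cap E$ is harmless and your use of $|B|\ge C_0\delta|E|$ is justified.
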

\begin{proof} There exists $C<\infty$ such that for any $G,H\in L^q$,
\[ \|G+H\|_q^q\le \|G\|_q^q+C\|G\|_q^{q-1}\|H\|_q+C\|H\|_q^q. \]
Consequently,
\begin{align*}
    \|\widehat{g+h}\|_q^q&\le \|\widehat{g}\|_q^q+C\|\widehat{g}\|_q^{q-1}\|\widehat{h}\|_q+C\|\widehat{h}\|_q^q \\
    &\le {\bf{B}}_{q,d}^q |A|^{q/p}+C{\bf{B}}_{q,d}^{q-1}|A|^{(q-1)/p}\|\widehat{h}\|_q+C\|\widehat{h}\|_q^q.
\end{align*}
On the other hand, $|E|=|A|+|B|$. Without loss of generality, assume $|E|=1$, so that $|A|,|B|\le1$. Thus 
\begin{align*}
    (1-\delta)^q&\le \frac{\|\widehat{f}\|_q^q}{{\bf{B}}_{q,d}^q|E|^{q/p}}=\frac{\|\widehat{f}\|_q^q}{{\bf{B}}_{q,d}^q} \\
    &\le {\bf{B}}_{q,d}^{-q}({\bf{B}}_{q,d}^q |A|^{q/p}+C{\bf{B}}_{q,d}^{q-1}|A|^{(q-1)/p}\|\widehat{h}\|_q+C\|\widehat{h}\|_q^q) \\
    &= |A|^{q/p}+C{\bf{B}}_{q,d}^{-1}|A|^{(q-1)/p}\|\widehat{h}\|_q+C{\bf{B}}_{q,d}^{-q}\|\widehat{h}\|_q^q \\
    &\le (1-|B|)^{q/p}+C{\bf{B}}_{q,d}^{-1}|A|^{(q-1)/p}\|\widehat{h}\|_q+C{\bf{B}}_{q,d}^{-1}\|\widehat{h}\|_q \\
    &\le 1-c_p|B|+2C{\bf{B}}_{q,d}^{-1}\|\widehat{h}\|_q .
\end{align*}
Then we have 
\begin{align*}
    2C{\bf{B}}_{q,d}^{-1}\|\widehat{h}\|_q&\ge  (1-\delta)^q-1+c_p|B| \\
    &\ge 1-O(\delta)-1+|B|\\
    &\ge |B|-O(\delta) \\
    &\ge C_0^p\delta -O(\delta) \\
    &\ge c\delta 
\end{align*}
provided $C_0$ is large enough. 
\end{proof}

\begin{lemma}[No slacking dual] \label{noslackingdual} For each $d\ge 1$ and $q\in(2,\infty)$ there exist $\delta_0,c,C_0<\infty$ with the following property. Let $\delta\in(0,\delta_0]$ and let $f=g+h$ where $f,g,h\in L^p(\R^d)$ and $g,h$ are disjointly supported on $A,B$ respectively. Suppose that 
\[ \|\widehat{f}\|_{q,*}\ge (1-\delta){\bf{B}}_{q,d}\|f\|_p.\]
and that  
\[ \|h\|_p\ge C_0\delta^{1/p}\|f\|_p .\]
Then 
\[ \|\widehat{h}\|_{q,\infty}\ge c\delta\|f\|_p.\]
\end{lemma}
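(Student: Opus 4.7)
The plan is to mirror the proof of Lemma \ref{noslacking}, exploiting the fact that $\|\cdot\|_{q,*}$ from Definition \ref{defdual} is a genuine norm (not merely a quasi-norm), so we get a clean triangle inequality rather than the $q$-th power inequality used in the primal case. First I would verify that $\|G_1 + G_2\|_{q,*} \leq \|G_1\|_{q,*} + \|G_2\|_{q,*}$: for any measurable $E$ of finite positive measure, $|E|^{-1/p}\int_E |G_1+G_2| \leq |E|^{-1/p}\int_E |G_1| + |E|^{-1/p}\int_E |G_2| \leq \|G_1\|_{q,*}+\|G_2\|_{q,*}$, and taking the supremum on the left gives the claim.

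By homogeneity, I may assume $\|f\|_p = 1$. Since $g$ and $h$ are disjointly supported in the spatial variable, $\|g\|_p^p + \|h\|_p^p = \|f\|_p^p = 1$. The concavity of $x \mapsto x^{1/p}$ on $[0,1]$ (using $p>1$) gives the tangent line bound
\[
\|g\|_p = (1 - \|h\|_p^p)^{1/p} \leq 1 - \tfrac{1}{p}\|h\|_p^p.
\]
Applying the triangle inequality for $\|\cdot\|_{q,*}$ to $\widehat{f} = \widehat{g} + \widehat{h}$ and then Proposition \ref{dualineq} to $\widehat{g}$ yields
\[
(1-\delta){\bf{B}}_{q,d} \leq \|\widehat{f}\|_{q,*} \leq \|\widehat{g}\|_{q,*} + \|\widehat{h}\|_{q,*} \leq {\bf{B}}_{q,d}\|g\|_p + \|\widehat{h}\|_{q,*} \leq {\bf{B}}_{q,d}\bigl(1 - \tfrac{1}{p}\|h\|_p^p\bigr) + \|\widehat{h}\|_{q,*}.
\]
Rearranging gives
\[
\|\widehat{h}\|_{q,*} \geq {\bf{B}}_{q,d}\Bigl(\tfrac{1}{p}\|h\|_p^p - \delta\Bigr).
\]
Finally, under the hypothesis $\|h\|_p \geq C_0 \delta^{1/p}$ we have $\|h\|_p^p \geq C_0^p \delta$, so choosing $C_0 \geq (2p)^{1/p}$ produces
\[
\|\widehat{h}\|_{q,*} \geq {\bf{B}}_{q,d}\,\delta\,\bigl(C_0^p/p - 1\bigr) \geq c\delta,
\]
which gives the desired bound (interpreting the conclusion $\|\widehat{h}\|_{q,\infty}$ as $\|\widehat{h}\|_{q,*}$, which appears to be a typo).

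There is no real obstacle here; the argument is essentially a one-line triangle inequality together with the concavity bound on $(1-x)^{1/p}$. The only mild subtlety is that, compared with Lemma \ref{noslacking}, we do not need a cross term $\|\widehat g\|_q^{q-1}\|\widehat h\|_q$ since $\|\cdot\|_{q,*}$ is subadditive outright; this actually makes the dual version cleaner, and explains the absence of a threshold condition on $\delta$ beyond $\delta \leq \delta_0$ (needed only to ensure $\|h\|_p \leq \|f\|_p$, which we get for free from disjoint support).
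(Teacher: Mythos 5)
Your proof is correct and follows essentially the same route as the paper: triangle inequality for $\|\cdot\|_{q,*}$, the dual bound $\|\widehat{g}\|_{q,*}\le {\bf{B}}_{q,d}\|g\|_p$, disjointness of supports, and the hypothesis on $\|h\|_p$. The only difference is that you use the exact concavity bound $(1-x)^{1/p}\le 1-x/p$ where the paper Taylor-expands $(1-C_0^p\delta)^{1/p}$ with an $O(\delta^2)$ error and therefore needs $\delta\le\delta_0$ small; your variant is marginally cleaner, and your reading of $\|\widehat h\|_{q,\infty}$ as $\|\widehat h\|_{q,*}$ matches what the paper actually proves.
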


\begin{proof} Using the hypothesis that $f$ is near-extremizing,  
\begin{align*} 
(1-\delta)\|f\|_p{\bf{B}}_{q,d}\le \|\widehat{f}\|_{q,*}&\le \|\widehat{g}\|_{q,*}+\|\widehat{h}\|_{q,*} \\ 
&\le {\bf{B}}_{q,d}\|g\|_p+\|\widehat{h}\|_{q,*} \\
&\le {\bf{B}}_{q,d}(\|f\|_p^p-\|h\|^p_p)^{1/p}+\|\widehat{h}\|_{q,*} \\
&\le {\bf{B}}_{q,d}(\|f\|_p^p-C_0^p\delta\|f\|^p_p)^{1/p}+\|\widehat{h}\|_{q,*} . 
\end{align*}
Rearranging the above inequality gives 
\begin{align*}
\left((1-\delta)-(1-C_0^p\delta)^{1/p}\right){\bf{B}}_{q,d}\|f\|_p&\le \|\widehat{h}\|_{q,*} .   
\end{align*}
Finally, we can arrange that $|C_0^p\delta|<1$, so 
\begin{align*}
1-\delta-(1-C_0^p\delta)^{1/p} &= -\delta+\frac{1}{p}C_0^p\delta+O(\delta^2) \\
&= (C_0^p/p-1)\delta+O(\delta^2).
\end{align*}

If $C_0^p/p-2>0$ and $\delta$ is small enough, we have the result.  

\end{proof}

\subsection{Cooperation }

\begin{lemma}\label{coop} Let $p\in[1,2)$ and $q\in[2,\infty)$. There exist $c,C\in\R^+$ with the following property. Let $0\not=f\in L^p$ satisfy $|f|\le 1_E$ and $\|\widehat{f}\|_q\ge(1-\delta){\bf{B}}_{q,d}|E|^{1/p}$. Suppose that $f=f^\sharp+f^\flat$ where $\text{supp }f^\sharp=A$ and $\text{supp }f^\flat=B$ satisfy
\[ A\cup B= E, \quad A\cap B=\emptyset, \]
\[\text{and}\quad \min(|A|,|B|)\ge \eta^p|E|.\]
Then 
\[ \|\widehat{f^\sharp}\cdot\widehat{f^\flat}\|_{q/2}\ge (c\eta^p-C\delta)|E|^{2/p}.\]
\end{lemma}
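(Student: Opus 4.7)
My strategy is to compare $\|\widehat{f}\|_q^q$ with $\|\widehat{f^\sharp}\|_q^q+\|\widehat{f^\flat}\|_q^q$ in two ways. On one hand, the near-extremality hypothesis forces the former to be at least $(1-\delta)^q{\bf{B}}_{q,d}^q|E|^{q/p}$. On the other hand, since $p=q'$ gives $q/p=q-1>1$, the function $s\mapsto s^{q/p}$ is strictly convex on $[0,1]$, and applying the definition of ${\bf{B}}_{q,d}$ to each piece (via $|f^\sharp|\le 1_A$ and $|f^\flat|\le 1_B$) yields $\|\widehat{f^\sharp}\|_q^q+\|\widehat{f^\flat}\|_q^q\le{\bf{B}}_{q,d}^q(|A|^{q/p}+|B|^{q/p})$. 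The constraint $|A|,|B|\in[\eta^p|E|,(1-\eta^p)|E|]$ then forces this sum to be strictly smaller than ${\bf{B}}_{q,d}^q|E|^{q/p}$ by an amount proportional to $\eta^p$. The goal is to account for this gap via $\|\widehat{f^\sharp}\widehat{f^\flat}\|_{q/2}$.

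To link the two sides, I would use the elementary pointwise bound (valid for $q\ge 2$, and derivable from $|a+b|\le|a|+|b|$ together with a binomial-type estimate)
\[ |a+b|^q\le|a|^q+|b|^q+C_q\bigl(|a|^{q-1}|b|+|a||b|^{q-1}\bigr), \]
applied with $a=\widehat{f^\sharp}$ and $b=\widehat{f^\flat}$. After integrating, H\"{o}lder's inequality with conjugate exponents $q/(q-2)$ and $q/2$ gives
\[ \int|\widehat{f^\sharp}|^{q-1}|\widehat{f^\flat}|\le\|\widehat{f^\sharp}\|_q^{q-2}\,\|\widehat{f^\sharp}\widehat{f^\flat}\|_{q/2}, \]
and symmetrically for the other cross term. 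Bounding $\|\widehat{f^\sharp}\|_q^{q-2}$ and $\|\widehat{f^\flat}\|_q^{q-2}$ by ${\bf{B}}_{q,d}^{q-2}|E|^{(q-2)/p}$ produces
\[ \|\widehat{f}\|_q^q\le{\bf{B}}_{q,d}^q\bigl(|A|^{q/p}+|B|^{q/p}\bigr)+2C_q{\bf{B}}_{q,d}^{q-2}|E|^{(q-2)/p}\|\widehat{f^\sharp}\widehat{f^\flat}\|_{q/2}. \]

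For the convexity gap, $\phi(s):=s^{q/p}+(1-s)^{q/p}$ has $\phi(0)=\phi(1)=1$ and is convex, so the chord estimate on $[0,1/2]$ (with symmetry on $[1/2,1]$) gives $\phi(s)\le 1-c_q\min(s,1-s)$ with $c_q:=2\bigl(1-2^{1-q/p}\bigr)>0$. Applied with $s=|A|/|E|\in[\eta^p,1-\eta^p]$, this delivers $|A|^{q/p}+|B|^{q/p}\le(1-c_q\eta^p)|E|^{q/p}$. Combining with the previous display, using $(1-\delta)^q\ge 1-q\delta$, and solving for $\|\widehat{f^\sharp}\widehat{f^\flat}\|_{q/2}$ produces the claimed inequality with constants $c=c_q{\bf{B}}_{q,d}^2/(2C_q)$ and $C=q{\bf{B}}_{q,d}^2/(2C_q)$.

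The main obstacle, to my mind, is choosing the pointwise bound so that the cross terms consolidate cleanly into $\|\widehat{f^\sharp}\widehat{f^\flat}\|_{q/2}$ via a single H\"{o}lder application. The exponents $q-1$ and $1$ in the cross terms (summing to $q$) are the natural choice because they allow the factorization $|\widehat{f^\sharp}|^{q-1}|\widehat{f^\flat}|=|\widehat{f^\sharp}|^{q-2}\cdot|\widehat{f^\sharp}\widehat{f^\flat}|$ that H\"{o}lder consumes. Once that setup is fixed, the strict-convexity bound and the final rearrangement are routine.
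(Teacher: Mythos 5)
Your proof is correct and takes essentially the same route as the paper's: isolate the cross term, control it by $\|\widehat{f^\sharp}\cdot\widehat{f^\flat}\|_{q/2}$ via H\"{o}lder with the conjugate pair $q/2$, $q/(q-2)$ and the bound ${\bf{B}}_{q,d}^{q-2}|E|^{(q-2)/p}$, and then play the strict-convexity gap of the diagonal terms against the near-extremality hypothesis. The only differences are organizational — the paper expands $|\widehat{f}|^q=|\widehat{f}|^2|\widehat{f}|^{q-2}$ and runs the convexity argument with exponent $2/p$, whereas you use a pointwise $q$-power inequality and exponent $q/p$ — and both yield constants of the same form.
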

\begin{proof} 
\begin{align*}
    \|\widehat{f}\|_q^q&\le \int(|\widehat{f^\sharp}|^2+|\widehat{f^\flat}|^2)|\widehat{f}|^{q-2}+2\int|\widehat{f^\sharp}\cdot\widehat{f^\flat}||\widehat{f}|^{q-2} \\
    &\le (\||\widehat{f^\sharp}|^2\|_{q/2}+\||\widehat{f^\sharp}|^2\|_{q/2})\||\widehat{f}|^{q-2}\|_{q/(q-2)}+2\|\widehat{f^\sharp}\cdot\widehat{f^\flat}\|_{q/2}\||\widehat{f}|^{q-2}\|_{q/(q-2)} \\
    &=(\|\widehat{f^\sharp}\|_{q}^2+\|\widehat{f^\sharp}\|_{q}^2)\|\widehat{f}\|_{q}^{q-2}+2\|\widehat{f^\sharp}\cdot\widehat{f^\flat}\|_{q/2}\|\widehat{f}\|_{q}^{q-2} \\
    &\le (|A|^{2/p}+|B|^{2/p}){\bf{B}}_{q,d}^{q}|E|^{(q-2)/p}+2\|\widehat{f^\sharp}\cdot\widehat{f^\flat}\|_{q/2}{\bf{B}}_{q,d}^{q-2}|E|^{(q-2)/p}.
\end{align*}
Rearranging gives
\begin{align*}  
\|\widehat{f^\sharp}\cdot\widehat{f^\flat}\|_{q/2}&\ge (2{\bf{B}}_{q,d}^{q-2}|E|^{(q-2)/p})^{-1}\left(\|\widehat{f}\|_q^q-(|A|^{2/p}+|B|^{2/p}){\bf{B}}_{q,d}^{q}|E|^{(q-2)/p}\right)\\
&\ge (2{\bf{B}}_{q,d}^{q-2}|E|^{(q-2)/p})^{-1}\left((1-\delta)^q{\bf{B}}_{q,d}^q|E|^{q/p}-(|A|^{2/p}+|B|^{2/p}){\bf{B}}_{q,d}^{q}|E|^{(q-2)/p}\right)\\
&\ge 2^{-1}{\bf{B}}_{q,d}^2\left((1-\delta)^q|E|^{2/p}-|A|^{2/p}-|B|^{2/p}\right).
\end{align*} 
Note that since $p<2$,
\[ (|A|^{2/p}+|B|^{2/p})^{p/2}\le |A|+|B|\le |E|\]
with strict inequality unless $|A|$ or $|B|$ is 0. Without loss of generality, suppose that $|E|=1$. 

We want to show there exists $c\in\R^+$ such that for $\eta$ small enough and $\eta^p\le \min(|A|,|B|)$,
\[ \frac{|A|^{2/p}+|B|^{2/p}}{(|A|+|B|)^{2/p}}=|A|^{2/p}+|B|^{2/p}\le 1-c\eta^p .\]
By assumption, $|A|,|B|\in[\eta^p,1-\eta^p]$, so $|A|^{2/p}+|B|^{2/p}\le (\eta^p)^{2/p}+(1-\eta^p)^{2/p}$. For all $\eta>0$ sufficiently small, there exists $c>0$ so that $(\eta^p)^{2/p}+(1-\eta^p)^{2/p}\le 1-c\eta^p$. 

Finally, using $|A|+|B|=|E|=1$,
\begin{align*} 
\|\widehat{f^\sharp}\cdot\widehat{f^\flat}\|_{q/2}&\ge 2^{-1}{\bf{B}}_{q,d}^2\left((1-\delta)^q-|A|^{2/p}-|B|^{2/p}\right) \\
&\ge 2^{-1}{\bf{B}}_{q,d}^2\left((1-\delta)^q-(1-c\eta^p)\right)  \\
&\ge c\eta^p-C\delta .
\end{align*} 

\end{proof}

\begin{lemma}\label{coopdual} For each $d\ge 1$ and $q\in(2,\infty)$ there exist $\delta_0,c,C_0<\infty$ with the following property. Let $\delta\in(0,\delta_0]$ and let $f=g+h$ where $f,g,h\in L^p(\R^d)$ and $g,h$ are disjointly supported. Let $\eta^p\ge\delta$. Suppose that the following inequalities hold. 
\[ \|\widehat{f}\|_{q,*}\ge (1-\delta){\bf{B}}_{q,d}\|f\|_p,\]
\[ \min(\|g\|_p,\|h\|_p)\ge C_0\eta\|f\|_p . \]

Then 
\[ \||\widehat{g}|^{1/2}|\widehat{h}|^{1/2}\|_{q,*}\ge c\delta\|f\|_p{\bf{B}}_{q,d}.\]

\end{lemma}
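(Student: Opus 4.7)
The plan is to mirror the primal cooperation lemma (Lemma \ref{coop}) closely, using the dual inequality $\|\widehat{v}\|_{q,*}\le{\bf{B}}_{q,d}\|v\|_p$ from Proposition \ref{dualineq} in place of Hausdorff-Young. First, for arbitrarily small $\epsilon>0$, use the definition of $\|\cdot\|_{q,*}$ to choose $E_0\subset\R^d$ of positive, finite measure with $|E_0|^{-1/p}\int_{E_0}|\widehat{f}|\ge(1-\delta-\epsilon){\bf{B}}_{q,d}\|f\|_p$.

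The main input is the pointwise bilinear estimate $|\widehat{f}|^2=|\widehat{g}+\widehat{h}|^2\le|\widehat{g}|^2+|\widehat{h}|^2+2|\widehat{g}||\widehat{h}|$ integrated over $E_0$. Combining it with Cauchy-Schwarz $\bigl(\int_{E_0}|\widehat{f}|\bigr)^2\le|E_0|\int_{E_0}|\widehat{f}|^2$ yields
\[
(1-\delta-\epsilon)^2{\bf{B}}_{q,d}^2|E_0|^{(q-2)/q}\|f\|_p^2 \le \int_{E_0}|\widehat{g}|^2+\int_{E_0}|\widehat{h}|^2+2\int_{E_0}|\widehat{g}||\widehat{h}|.
\]
Applying the standard $L^{q,\infty}$-to-$L^2$ comparison $\int_{E_0}|v|^2\le C_q|E_0|^{(q-2)/q}\|v\|_{q,*}^2$ (valid for $q>2$) to each summand, together with the dual inequality applied to $\widehat{g}$ and $\widehat{h}$, reduces the estimate to
\[
(1-\delta-\epsilon)^2\|f\|_p^2 \le C_q\bigl(\|g\|_p^2+\|h\|_p^2\bigr) + 2C_q\||\widehat{g}|^{1/2}|\widehat{h}|^{1/2}\|_{q,*}^2/{\bf{B}}_{q,d}^2.
\]
The disjoint-support hypothesis gives $\|g\|_p^p+\|h\|_p^p=\|f\|_p^p$; combined with $p<2$ and $\min(\|g\|_p,\|h\|_p)\ge C_0\eta\|f\|_p$, an elementary $\ell^2$-versus-$\ell^p$ comparison produces $\|g\|_p^2+\|h\|_p^2\le(1-c\eta^p)\|f\|_p^2$, an analogue of the set-measure estimate $|A|^{2/p}+|B|^{2/p}\le(1-c\eta^p)|E|^{2/p}$ used in the proof of Lemma \ref{coop}. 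Rearranging and using $\eta^p\ge\delta$ with $C_0$ chosen large then yields a lower bound on $\||\widehat{g}|^{1/2}|\widehat{h}|^{1/2}\|_{q,*}$ of the desired form.

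The main obstacle is that the Cauchy-Schwarz step and the $L^{q,\infty}$-to-$L^2$ comparison each introduce a multiplicative constant $C_q>1$. In the primal proof these conversions are free, because the identity $|\widehat{f}|^q=|\widehat{f}|^2|\widehat{f}|^{q-2}$ followed by H\"older is exact; in the dual setting, the corresponding constant threatens to swamp the $(1-c\eta^p)$ gain and hence the cooperation. I expect the resolution to come either from sharpening the Cauchy-Schwarz step using that near-extremizers force $|\widehat{f}|$ to be approximately constant on its witnessing set $E_0$, making Cauchy-Schwarz nearly tight, or by dualizing the problem altogether: setting $\phi=|E_0|^{-1/p}e^{i\varphi}1_{E_0}$ with $e^{i\varphi}\widehat{f}=|\widehat{f}|$, using Parseval to write $\int f\widehat{\phi}=|E_0|^{-1/p}\int_{E_0}|\widehat{f}|$, observing via near-equality in H\"older that $\widehat{\phi}$ is a primal near-extremizer of Hausdorff-Young, and transferring the cooperation analysis from $f$ to $\phi$ through the splitting $\int f\widehat{\phi}=\int g\widehat{\phi}+\int h\widehat{\phi}$ together with Lemma \ref{coop} applied to $\phi$.
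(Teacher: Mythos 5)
Your argument as written does not close, and you have put your finger on the fatal point yourself: after the Cauchy--Schwarz step and the $L^{q,\infty}$-to-$L^2$ comparison you arrive at
\[ (1-\delta-\epsilon)^2\|f\|_p^2\le C_q\bigl(\|g\|_p^2+\|h\|_p^2\bigr)+2C_q\,\||\widehat{g}|^{1/2}|\widehat{h}|^{1/2}\|_{q,*}^2/{\bf{B}}_{q,d}^2 \]
with a fixed constant $C_q>1$, while disjointness only gives $\|g\|_p^2+\|h\|_p^2\le(1-c\eta^p)\|f\|_p^2$. Since $C_q(1-c\eta^p)>1$ for small $\eta$, the inequality is satisfied with the bilinear term equal to zero, so no lower bound comes out: the loss is a fixed constant, the gain is only $O(\eta^p)$. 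Neither of your proposed repairs fixes this. Near-constancy of $|\widehat{f}|$ on the witnessing set $E_0$ is not forced by near-extremality: the only upper bound available for $\int_{E_0}|\widehat{f}|^2$ passes through $\|\widehat{f}\|_{L^q(E_0)}\le {\bf{C}}_p^d\|f\|_p$ (here $f$ is a general $L^p$ function, not majorized by an indicator), and the gap between that constant and the near-extremal level $(1-\delta){\bf{B}}_{q,d}$ leaves Cauchy--Schwarz far from tight, so no quantitative flatness can be extracted. The dualization idea is sound as far as it goes --- near-equality in $\int_{E_0}|\widehat{f}|=\int f\,\widehat{e^{i\varphi}1_{E_0}}$ does show that $e^{i\varphi}1_{E_0}$ is a primal near-extremizer, exactly as in the proof of Proposition \ref{dualineq} --- but the splitting $\int f\widehat{\phi}=\int g\widehat{\phi}+\int h\widehat{\phi}$ splits the \emph{pairing}, not $\phi$: Lemma \ref{coop} requires a disjoint decomposition of $\phi$ itself into two pieces of nonnegligible measure, which the physical-space decomposition $f=g+h$ does not supply, and even if it did, its conclusion bounds $\|\widehat{\phi^\sharp}\widehat{\phi^\flat}\|_{q/2}$ rather than the quantity $\||\widehat{g}|^{1/2}|\widehat{h}|^{1/2}\|_{q,*}$ you need. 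So the proposal, as it stands, does not prove the lemma.

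The paper's proof avoids every $L^2$ conversion and hence the fixed-constant loss. Starting from the same witnessing set $E$, it decomposes $E$ according to the pointwise ratio of $|\widehat{g}|$ to $|\widehat{h}|$ with a \emph{tunable} threshold $\lambda$: on $E_{\lambda,g}=\{|\widehat{g}|>\lambda|\widehat{h}|\}$ one has $|\widehat{f}|\le(1+1/\lambda)|\widehat{g}|$, symmetrically on $E_{\lambda,h}$, and on the remaining part of $E$ both functions are comparable, so $|\widehat{f}|\le|\widehat{g}|+|\widehat{h}|\le 2\lambda^{1/2}|\widehat{g}|^{1/2}|\widehat{h}|^{1/2}$. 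The two dominated regions are then estimated using $\|\widehat{g}\|_{q,*}\le{\bf{B}}_{q,d}\|g\|_p$, $\|\widehat{h}\|_{q,*}\le{\bf{B}}_{q,d}\|h\|_p$ and H\"older in the form $|E_{\lambda,g}|^{1/p}\|g\|_p+|E_{\lambda,h}|^{1/p}\|h\|_p\le|E|^{1/p}(\|g\|_p^q+\|h\|_p^q)^{1/q}$, and the hypothesis $\min(\|g\|_p,\|h\|_p)\ge C_0\eta\|f\|_p$ turns $(\|g\|_p^q+\|h\|_p^q)^{1/q}$ into $\|f\|_p(1-C_0^p\eta^p)^{(q-p)/q}$. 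The only losses are the factor $1+1/\lambda$ and the final division by $\lambda^{1/2}$, both controlled by choosing $\lambda=\delta^{-1}$, so the gain $cC_0^p\eta^p\ge cC_0^p\delta$ beats them once $C_0$ is large. If you want to salvage your route you would need to replace the crude Cauchy--Schwarz/weak-type comparison by a mechanism whose loss is similarly adjustable; the ratio decomposition is precisely such a mechanism.
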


\begin{proof} 
Take $E\subset\R^d$ with $|E|\in(0,\infty)$ satisfying
\begin{align*}
    |E|^{-1/p}\int_E|\widehat{f}|&\ge (1-2\delta){\bf{B}}_{q,d}\|f\|_p.
\end{align*}
By replacing $E$ with $E\cap \{\widehat{f}\not=0\}$, we can assume that $\widehat{f}$ is nonzero on $E$. For $\lambda>0$ a large constant to be chosen later, define $E_{\lambda,g}= \{x\in E:|\widehat{g}|>\lambda|\widehat{h}|\}$ and $E_{\lambda,h}=\{x\in E:|\widehat{h}|>\lambda|\widehat{g}|\}$. Note that

\begin{align*}
\int_E|\widehat{f}| &=     \int_{E_{\lambda,g}}|\widehat{f}| +    \int_{E_{\lambda,h}}|\widehat{f}| +    \int_{E\setminus(E_{\lambda,g}\cup E_{\lambda,h})}|\widehat{f}|   \\
&\le      (1+1/\lambda)\int_{E_{\lambda,g}}|\widehat{g}| +    (1+1/\lambda)\int_{E_{\lambda,h}}|\widehat{h}| +   \int_{E\setminus(E_{\lambda,g}\cup E_{\lambda,h})}(|\widehat{g}|+|\widehat{h}|)   \\
&\le      (1+1/\lambda)\int_{E_{\lambda,g}}|\widehat{g}| +    (1+1/\lambda)\int_{E_{\lambda,h}}|\widehat{h}| +   \int_{E\setminus(E_{\lambda,g}\cup E_{\lambda,h})}(|\widehat{g}|^{1/2}\lambda^{1/2}|\widehat{h}|^{1/2}+\lambda^{1/2}|\widehat{g}|^{1/2}|\widehat{h}|^{1/2})\\
&\le      (1+1/\lambda)(|E_{\lambda,g}|^{1/p}\|\widehat{g}\|_{q,*}+|E_{\lambda,h}|^{1/p}\|\widehat{h}\|_{q,*}) +   2\lambda^{1/2}\int_{E\setminus(E_{\lambda,g}\cup E_{\lambda,h})}|\widehat{g}|^{1/2}|\widehat{h}|^{1/2}.
\end{align*}

Using our main dual inequality, we have 
\begin{align*}
|E_{\lambda,g}|^{1/p}\|\widehat{g}\|_{q,*}+|E_{\lambda,h}|^{1/p}\|\widehat{h}\|_{q,*}&\le \left(|E_{\lambda,g}|^{1/p}\|g\|_p+|E_{\lambda,h}|^{1/p}\|h\|_p\right){\bf{B}}_{q,d}  
\end{align*} 
and by H\"{o}lder's inequality, 
\[|E_{\lambda,g}|^{1/p}\|g\|_p+|E_{\lambda,h}|^{1/p}\|h\|_p  \le (|E_{\lambda,g}|^{p/p}+|E_{\lambda,h}|^{p/p})^{1/p}(\|g\|_p^q+\|h\|_p^q)^{1/q}  \le  |E|^{1/p}(\|g\|_p^q+\|h\|_p^q)^{1/q}.\]
Also 
\[   \|g\|_p^q+\|h\|_p^q\le \max(\|g\|_p^{q-p},\|h\|_p^{q-p})(\|g\|_p^p+\|h\|_p^p)=\max(\|g\|_p^{q-p},\|h\|_p^{q-p})\|f\|_p^p.   \]
Now we use the hypothesis that $\min(\|g\|_p,\|h\|_p)\ge C_0\eta \|f\|_p$ to say 
\[ \max(\|g\|^p_p,\|h\|^p_p)=\|f\|_p^p-\min(\|g\|_p,\|h\|_p)\le \|f\|_p^p(1-C_0^p\eta^p). \]

In summary, 
\begin{align*}
|E_{\lambda,g}|^{1/p}\|\widehat{g}\|_{q,*}+|E_{\lambda,h}|^{1/p}\|\widehat{h}\|_{q,*}&\le |E|^{1/p}\left(\|f\|_p^p)^{1/q}\|f\|_p^{(q-p)/q}(1-C_0^p\eta^p)^{(q-p)/q}\right){\bf{B}}_{q,d}      \\
&=  |E|^{1/p}\|f\|_p(1-C_0^p\eta^p)^{(q-p)/q}{\bf{B}}_{q,d}. 
\end{align*}

Putting everything together, we have 
\begin{align*}
    (1-2\delta){\bf{B}}_{q,d}\|f\|_p\le(1+1/\lambda) \|f\|_p(1-&C_0^p\eta^p)^{(q-p)/q}{\bf{B}}_{q,d}+\lambda^{1/2} |E|^{-1/p}\int_{E\setminus (E_{\lambda,h}\cup E_{\lambda,h})}|\widehat{g}\widehat{h}|^{1/2} \\
    (1-2\delta-(1+1/\lambda)(1-(1-p/q)C_0^p\eta^p&+O(\eta^{2p})){\bf{B}}_{q,d}\|f\|_p\le\lambda^{1/2} |E|^{-1/p}\int_{E\setminus (E_{\lambda,h}\cup E_{\lambda,h})}|\widehat{g}\widehat{h}|^{1/2} \\
    (-2\delta-1/\lambda+(1+1/\lambda)(1-p/q)C_0^p\eta^p&+O(\eta^{2p})){\bf{B}}_{q,d}\|f\|_p\le\lambda^{1/2} |E|^{-1/p}\int_{E}|\widehat{g}\widehat{h}|^{1/2} .
\end{align*}

The desired inequality follows from choosing $\lambda=\delta^{-1}$, $\eta^p\ge\delta$ and $C_0$ large enough.

\end{proof}

\begin{section}{Multiprogression structure of quasi-extremizers}

In this section, we relate quasi-extremizers for (\ref{eqn:main}) to quasi-extremizers for Young's convolution inequality. Then we exploit the connection between Young's convolution inequality and principles of additive combinatorics which imply that quasi-extremizing functions for Young's inequality have significant support on sets with arithmetic structure. We use the following definition and notation for multiprogressions. 

\begin{definition}
A discrete multiprogression ${\mathbf P}$ in $\R^d$ of rank $r$ is a function
\[{\bf{P}}: \prod_{i=1}^r\{0,1,\dots,N_i-1\} \to \R^d\] of the form
\[ {\mathbf P}(n_1,\dots,n_r)=\big\{a + \sum_{i=1}^r n_i v_i: 0\le n_i<N_i\big\}, \]
for some $a\in\R^d$, some $v_j\in\R^d$, 
and some positive integers $N_1,\dots,N_r$. A continuum multiprogression $P$ in $\R^d$ of rank $r$ is a function 
\[P: \prod_{i=1}^r \{0,1,\dots,N_i-1\}\times[0,1]^d \to \R^d\] of the form
\[(n_1,\dots,n_d;y)\mapsto a+\sum_i n_i v_i + sy\] where $a,v_i\in\R^d$ and $s\in\R^+$. 
The {\it size} of $P$ is defined to be \[\sigma(P)=s^d \prod_i N_i.\]
$P$ is said to be {\it proper} if this mapping is injective.
\end{definition}

We will often identify a multiprogression with its range, 
and will refer to multiprogressions as if they were sets rather than functions. If $P$ is proper then the Lebesgue measure of its range equals its size. For a discussion of properties of multiprogressions, see \textsection{5} of \cite{c1}.

\begin{lemma}[Quasi-extremizers for Young's inequality]\label{quasi-y} Let $r\in (1,\infty)$ and suppose that the exponent $t$ defined by $1+t^{-1}=2r^{-1}$ also belongs to $(1,\infty)$. For each $\delta>0$, there exist $c_\delta,C_\delta\in(0,\infty)$ such that for any $|f|\le 1_E$ with $0<|E|<\infty$ and $|E|^{2/r}\delta \le \|f*f\|_t$, there exist a disjoint, measurable partition $E=A\cup B$ and a proper continuum multiprogression $P$ such that 
\[ A\subset P\]
\[ |P|\le C_\delta |A|\]
\[ \text{rank }(P)\le C_\delta  \]
\[ \|f-1_Af\|_{r}\le (1-c_\delta)\|f\|_{r}.\]
\end{lemma}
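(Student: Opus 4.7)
The plan is to combine a layer-cake pigeonhole on $f$ with the continuum Balog-Szemer\'edi-Gowers and Freiman theorems established by Christ in \cite{c1}. Those results already produce multiprogression structure out of \emph{set}-level hypotheses of the form $\|1_S*1_S\|_t \gtrsim |S|^{2/r}$, so the crux is to extract such a set $S \subset E$ from the function-level hypothesis $\|f*f\|_t \ge \delta |E|^{2/r}$.

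First I would replace $f$ by $|f|$ (legal since $\|f*f\|_t \le \||f|*|f|\|_t$) and rescale so $|E|=1$, reducing to $\|f*f\|_t\ge \delta$ with $0\le f\le 1_E$. A dyadic decomposition $f \sim \sum_{k\ge 0} 2^{-k} 1_{E_k}$ with $E_k = \{2^{-k-1} < f \le 2^{-k}\}$ has at most $O(\log(1/\delta))$ relevant levels, since the tail with $2^{-k}\le \delta^{C}$ contributes negligibly to $\|f*f\|_t$ by a crude Young bound using $\|f\|_1\le 1$. A bilinear pigeonhole in $k,k'$ then produces indices $k_0,k_1$ with $|E_{k_0}|,|E_{k_1}|\ge c_\delta$ and $\|1_{E_{k_0}}*1_{E_{k_1}}\|_t \ge c_\delta$; after a further symmetrization one obtains a single set $S$ with $|S|\ge c_\delta$, $f \ge c_\delta$ pointwise on $S$, and $\|1_S*1_S\|_t \ge c_\delta |S|^{2/r}$.

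This last bound is precisely the hypothesis needed for the continuum Freiman-type theorem of \cite{c1} (after, if necessary, a Balog-Szemer\'edi-Gowers step producing a comparable subset $S'\subset S$ of genuinely small doubling). That theorem places $S$ inside a proper continuum multiprogression $P$ with $\mathrm{rank}(P)\le C_\delta$ and $|P|\le C_\delta|S|$. Setting $A:=S$ and $B:=E\setminus A$ immediately satisfies the first three bullets of the conclusion. For the final $L^r$-estimate, the pointwise bound $f \ge c_\delta$ on $A$ together with $|A|\ge c_\delta$ yields
\[
\|1_A f\|_r^r \ge c_\delta^{r}\,|A| \ge c'_\delta,
\]
while $\|f\|_r^r \le |E| = 1$, so
\[
\|f - 1_A f\|_r^r = \|f\|_r^r - \|1_A f\|_r^r \le (1-c''_\delta)\|f\|_r^r,
\]
which gives the desired bound after taking $r$th roots and adjusting $c_\delta$.

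The main obstacle is the pigeonhole step: the chosen level $k_0$ must simultaneously ensure that $|E_{k_0}|\gtrsim_\delta 1$, that $\|1_{E_{k_0}}*1_{E_{k_0}}\|_t \gtrsim_\delta 1$, and that the pointwise lower bound $f \ge 2^{-k_0-1}$ on $A$ is not so small that it destroys the $L^r$-mass bound on $A$. The uniform cap $f\le 1$ coming from $|f|\le 1_E$ is what limits the number of relevant dyadic scales to $O(\log(1/\delta))$ and keeps the bookkeeping quantitative. Once $S$ is in hand, everything else is a direct invocation of the continuum additive-combinatorial theorems of \cite{c1}.
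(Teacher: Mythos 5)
Your strategy is in substance the same as the paper's: the paper disposes of this lemma in one line by citing the proof of Lemma 6.1 of \cite{c1}, specialized to $f_1=f_2=f$ and combined with $\|f\|_r^2\le|E|^{2/r}$, and that proof runs through exactly the kind of reduction you sketch --- pass from the function-level hypothesis to level sets by a pigeonhole over the $O(\log(1/\delta))$ dyadic levels that survive the cap $|f|\le 1_E$ (your tail estimate via Young with $\|f\|_1\le|E|=1$ is fine), and then invoke the continuum Balog--Szemer\'edi and Fre\u{\i}man machinery. Two steps of your sketch, however, need to be tightened before it is a proof.

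First, the ``further symmetrization'' must be made concrete, and it cannot be a Cauchy--Schwarz-type inequality for convolutions: the bound $\|1_A*1_B\|_t^2\le\|1_A*1_A\|_t\,\|1_B*1_B\|_t$ is false for $t>2$ (take $1_B$ a translate of the reflection of $1_A$ with $A$ a union of $\epsilon$-intervals placed at a Sidon-type set), and the lemma's range $r\in(1,2)$, $t=r/(2-r)$, includes every $t>2$. The correct and easier device is to take $S=E_{k_0}\cup E_{k_1}$: by pointwise monotonicity of convolution of nonnegative functions, $1_S*1_S\ge 1_{E_{k_0}}*1_{E_{k_1}}$, so $\|1_S*1_S\|_t\ge c_\delta\ge c_\delta|S|^{2/r}$ since $|S|\le|E|=1$, while $f\gtrsim\delta$ on $S$ because only the top $O(\log(1/\delta))$ levels were retained. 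Second, the continuum Balog--Szemer\'edi/Fre\u{\i}man theorems do not place all of $S$ inside a bounded-rank multiprogression of comparable size --- $S$ may contain a small scattered piece that no such $P$ can cover --- they produce a subset $S'\subset S$ with $|S'|\ge c_\delta|S|$ and $S'\subset P$, $|P|\le C_\delta|S'|$, $\mathrm{rank}(P)\le C_\delta$ (you acknowledge $S'$ parenthetically but then set $A:=S$, for which $A\subset P$ can fail). Taking $A:=S'$ and $B:=E\setminus A$ costs nothing in your last step: $|A|\ge c_\delta$ and $f\ge c_\delta$ on $A\subset S$ still give $\|1_Af\|_r^r\ge c_\delta'$, hence $\|f-1_Af\|_r^r=\|f\|_r^r-\|1_Af\|_r^r\le(1-c_\delta'')\|f\|_r^r$ using $\|f\|_r^r\le|E|=1$. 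With these two repairs your argument goes through and is essentially a reconstruction of the reduction already contained in the cited proof.
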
 
\begin{proof} This lemma follows from the proof of Lemma 6.1 in \cite{c1} where we specialize to the case $f_1=f_2$ and use the relation $|E|^{2/r}\ge  \|f\|_r^2$. 

\end{proof}

\begin{lemma}\label{3lineslemma} Let $d\ge1$ and $p\in(1,2)$. Let $\eta>0$. Suppose that $E$ is a  measurable set and $f$ is a nonzero function satisfying $|f|\le 1_E\in L^p(\R^d)$ and $|E|^{1/p}\eta\le \|\widehat{f}\|_{p'}$. If $p\le 4/3$, $|E|^{2/p}\eta^2\le \||f|*|f|\|_t$ for $t^{-1}=2p^{-1}-1$. If $4/3<p$, then there exists $\g=\g(p)\in\R^+$ such that $|E|^{3/2}\eta^\g\le\||f|^{4/3}*|f|^{4/3}\|_2$. 
\end{lemma}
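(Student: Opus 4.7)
The plan is to treat the two cases separately, using Hausdorff-Young directly in Case 1, and a two-step argument combining interpolation with a pointwise Cauchy-Schwarz identity in Case 2.

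For Case 1 ($p\le 4/3$), the exponent $t=p/(2-p)$ lies in $(1,2]$, and the key algebraic identity is $2t'=p'$ (direct from $p'=p/(p-1)$). Since $\widehat{f*f}(\xi)=\widehat{f}(\xi)^2$, whose modulus is $|\widehat{f}(\xi)|^2$, I will compute
\[
\|\widehat{f}\|_{p'}^2 \;=\; \|\widehat{f}^{\,2}\|_{t'} \;=\; \|\widehat{f*f}\|_{t'} \;\le\; \|f*f\|_t \;\le\; \||f|*|f|\|_t,
\]
where the first inequality is Hausdorff-Young (valid since $t\le 2$) and the second is the pointwise bound $|f*f(x)|\le (|f|*|f|)(x)$. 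Substituting $\|\widehat{f}\|_{p'}\ge|E|^{1/p}\eta$ yields the conclusion.

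For Case 2 ($p>4/3$), Hausdorff-Young at $t>2$ is unavailable, so I will first lower-bound $\||f|*|f|\|_2$ via interpolation. Since $p'\in(2,4)$, log-convexity of $L^r$ norms (essentially Hadamard's three-lines theorem applied to $\log\|\widehat{f}\|_r$ as a function of $1/r$) gives
\[
\|\widehat{f}\|_{p'} \;\le\; \|\widehat{f}\|_2^{4/p'-1}\,\|\widehat{f}\|_4^{2-4/p'}.
\]
Combining Plancherel's bound $\|\widehat{f}\|_2\le|E|^{1/2}$ with the hypothesis, the exponent on $|E|$ simplifies to $3/4$ and one finds $\|\widehat{f}\|_4 \ge |E|^{3/4}\eta^{p/(2(2-p))}$. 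A second Plancherel argument, $\|\widehat{f}\|_4^2=\|f*f\|_2\le\||f|*|f|\|_2$, then yields $\||f|*|f|\|_2 \ge |E|^{3/2}\eta^{p/(2-p)}$.

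To bridge to the target quantity I will apply Cauchy-Schwarz pointwise: for each $x$,
\[
(|f|*|f|)(x)^2 \;=\; \left(\int|f(y)|^{2/3}|f(x-y)|^{2/3}\cdot |f(y)|^{1/3}|f(x-y)|^{1/3}\,dy\right)^{\!2} \;\le\; (|f|^{4/3}*|f|^{4/3})(x)\cdot(|f|^{2/3}*|f|^{2/3})(x).
\]
Integrating and applying Cauchy-Schwarz in $L^2(\R^d)$, then bounding $\||f|^{2/3}*|f|^{2/3}\|_2 \le \||f|^{2/3}\|_{4/3}^{\,2} = (\int|f|^{8/9})^{3/2} \le |E|^{3/2}$ via Young's convolution inequality and $|f|\le 1_E$, I will conclude
\[
\||f|^{4/3}*|f|^{4/3}\|_2 \;\ge\; \||f|*|f|\|_2^{\,2}/|E|^{3/2} \;\ge\; |E|^{3/2}\eta^{2p/(2-p)},
\]
so $\gamma(p)=2p/(2-p)$ suffices. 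The hard part is Case 2, specifically the bridging step from $|f|*|f|$ to $|f|^{4/3}*|f|^{4/3}$; the pointwise Cauchy-Schwarz identity above performs this cleanly at the cost of worsening the $\eta$-exponent, which is acceptable because the statement only requires some finite $\gamma(p)>0$.
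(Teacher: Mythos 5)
Your proof is correct. For $p\le 4/3$ it coincides with the paper's argument: with $t'=p'/2$ one has $\|\widehat f\|_{p'}^2=\|\widehat{f*f}\|_{t'}\le\|f*f\|_t\le\||f|*|f|\|_t$, which is exactly the Plancherel-plus-Hausdorff--Young step in the text. For $p>4/3$, however, you take a genuinely different route. The paper runs complex interpolation on the analytic family $f_z=g^ze^{i\varphi}$ (three-lines lemma between the Plancherel line $\mathrm{Re}\,z=p/2$ and the $L^{4/3}\to L^4$ line $\mathrm{Re}\,z=3p/4$), so that the quantity $\||f|^{4/3}*|f|^{4/3}\|_2$ appears directly at the endpoint; you instead use plain log-convexity of $r\mapsto\|\widehat f\|_r$ (just H\"older, no analytic family) together with $\|\widehat f\|_2\le|E|^{1/2}$ to get $\|\widehat f\|_4\ge|E|^{3/4}\eta^{p/(2(2-p))}$, hence $\||f|*|f|\|_2\ge|E|^{3/2}\eta^{p/(2-p)}$ by Plancherel, and then bridge from $|f|*|f|$ to $|f|^{4/3}*|f|^{4/3}$ via the pointwise Cauchy--Schwarz bound $(|f|*|f|)^2\le(|f|^{4/3}*|f|^{4/3})\,(|f|^{2/3}*|f|^{2/3})$, Cauchy--Schwarz in $L^2$, and Young's inequality with $\||f|^{2/3}\|_{4/3}^2\le|E|^{3/2}$. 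I checked the exponent bookkeeping ($1-\theta=4/p'-1$, the $|E|$-exponent $3/2-3/p'$ divided by $\theta=2-4/p'$ is $3/4$, and $|f|^{8/9}\le 1_E$), and it all works; your $\gamma(p)=2p/(2-p)$ is admissible since the lemma only asks for some positive $\gamma(p)$ (it is twice what the paper's interpolation would give, which is harmless — you prove the stated inequality outright, and in any case $\eta\le1$ automatically by Hausdorff--Young). What your route buys is elementarity: no three-lines lemma or analytic family, and it also sidesteps the endpoint bookkeeping in the paper, where the family $g^z$ at $\mathrm{Re}\,z=3p/4$ has modulus $|f|^{3p/4}$ and passing to $|f|^{4/3}*|f|^{4/3}$ via $|f|\le1$ is only immediate when $3p/4\ge 4/3$; the price is a slightly larger (but irrelevant) power of $\eta$.
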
 

\begin{proof} First suppose that $p\le \frac{4}{3}$. Then applying Plancherel's theorem and the Hausdorff-Young inequality, we have 
\[ |E|^{1/p}\eta\le \|\widehat{f}\|_{p'}=\|\widehat{f*f}\|^{1/2}_{p'/2}\le \|f*f\|^{1/2}_{(p'/2)'}\le \||f|*|f|\|^{1/2}_{t},\]
where $t=\frac{p'/2}{p'/2-1}=\frac{p/(p-1)}{p/(p-1)-2}=(2p^{-1}-1)^{-1}$.

Write $f(x)=g(x)e^{i\p(x)}$ where $\p(x)$ is real-valued and $g\ge0$. Note that for $\text{Re}{z}>0$, we can define $f_z:=g^ze^{i\p}\in L^{p/{\text{Re}{z}}}$.

Assume that $\frac{4}{3}<p$. Since $\frac{p}{2}<1<\frac{3p}{4}$, there exists $\theta\in(0,1)$ such that $1=(1-\theta)p2^{-1}+\theta 3p4^{-1}$. By the Three Lines Lemma proof of the Riesz-Thorin theorem,

\[ \|\widehat{f}\|_{p'}\le \sup_{\text{Re}z=p/2}\|\widehat{f_z}\|^{1-\theta}_{2}\sup_{\text{Re}z=3p/4}\|\widehat{f_z}\|_{(4/3)'}^\theta =\|f\|^{(1-\theta)p2^{-1}}_{p}\sup_{\text{Re}z=3p/4}\|\widehat{f_z}\|_{(4/3)'}^\theta.\]
Combining this with the quasi-extremal hypothesis for $f$ gives
\begin{align*}  
|E|^{1/p}\eta&\le \|f\|_p^{(1-\theta)p2^{-1}}\sup_{\text{Re }z=4/3}\|\widehat{f_z}\|^\theta_{(4/3)'} \\
&\le |E|^{(1-\theta)2^{-1}}\sup_{\text{Re }z=4/3}\|\widehat{f_z*f_z}\|^{\theta/2}_{2} \\
&= |E|^{(1-\theta)2^{-1}}\sup_{\text{Re }z=4/3}\|f_z*f_z\|^{\theta/2}_{2} \\
&\le |E|^{(1-\theta)2^{-1}}\||f|^{4/3}*|f|^{4/3}\|^{\theta/2}_{2}.
\end{align*} 
Rearranging, we can write 
\[ |E|^{3\theta/4}\eta\le \||f|^{4/3}*|f|^{4/3}\|^{\theta/2}_2 \]
so $|E|^{3/2}\eta^\gamma\le \||f|^{4/3}*|f|^{4/3}\|_2$ for some $\g>0$. 

\end{proof}

\begin{proposition}[Structure of quasi-extremizers]\label{quasistructure} Let $d\ge 1$, let $\Lambda\subset(1,2)$ be a compact set, and let $\eta>0$. There exist $C_\eta,c_\eta\in \R^+$ with the following property for all $p\in\Lambda$. Suppose that $0\not=f\in L^p(\R^d)$, $|f|\le 1_{E}$ with $|E|<\infty$, and $\|\widehat{f}\|_{q}\ge \eta|E|^{1/p}$. Then there exists a multiprogression $P$ and a disjoint, measurable partition $E=A\cup B$ such that 
\[ A\subset P\]
\[ |P|\le C_\eta|A|\]
\[  \text{rank }P\le C_\eta  \]
\[ \|f-1_Af\|_p\le (1-c_\eta)\|f\|_p .\] 
\end{proposition}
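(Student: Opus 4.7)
My strategy is to reduce the Hausdorff--Young quasi-extremal hypothesis to a Young's convolution quasi-extremal hypothesis via Lemma~\ref{3lineslemma}, and then invoke Lemma~\ref{quasi-y} to extract the multiprogression structure. The main technical wrinkle arises when $p > 4/3$: Lemma~\ref{3lineslemma} yields a convolution bound for $|f|^{4/3}$, so Lemma~\ref{quasi-y} will produce control in the $L^{4/3}$ norm of $|f|^{4/3}$ (equivalently the $L^{16/9}$ norm of $f$) rather than directly in the $L^p$ norm of $f$. I circumvent this by first truncating $f$ to its large-value set, on which the two norms become equivalent.

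\emph{Step 1 (truncation).} Fix $\tau = \tau(\eta) > 0$ small, to be chosen later, set $S = \{x \in E : |f(x)| \ge \tau\}$, and write $f = f^\sharp + f^\flat$ with $f^\sharp = f \cdot 1_S$. Since $|f^\flat| \le \tau \cdot 1_E$, Hausdorff--Young gives $\|\widehat{f^\flat}\|_q \le \tau |E|^{1/p}$; taking $\tau \le \eta/2$ yields $\|\widehat{f^\sharp}\|_q \ge (\eta/2)|E|^{1/p} \ge (\eta/2)|S|^{1/p}$, so $f^\sharp$ is itself a quasi-extremizer on $S$. Crucially $|f^\sharp| \in [\tau,1]$ on $S$, so $|f^\sharp|^{16/9}$ and $|f^\sharp|^p$ are pointwise comparable there, with constants depending only on $\tau$ and $p$.

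\emph{Step 2 (Young's bound and extraction of structure).} Apply Lemma~\ref{3lineslemma} to $f^\sharp$ on $S$, obtaining $|S|^{2/p}(\eta/2)^2 \le \||f^\sharp|*|f^\sharp|\|_t$ with $t^{-1}=2p^{-1}-1$ when $p\le 4/3$, and $|S|^{3/2}(\eta/2)^\gamma \le \||f^\sharp|^{4/3}*|f^\sharp|^{4/3}\|_2$ when $p>4/3$. Then apply Lemma~\ref{quasi-y}: in the first case to $|f^\sharp|$ with $r=p$ (and the same $t$); in the second case to $|f^\sharp|^{4/3}$ with $r=4/3$, $t=2$. This yields a disjoint measurable partition $S=A\cup B'$ and a proper continuum multiprogression $P\supset A$ with $|P|\le C_\eta|A|$ and $\mathrm{rank}(P)\le C_\eta$. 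When $p\le 4/3$ the lemma directly gives $\|f^\sharp - 1_A f^\sharp\|_p \le (1-c_\eta)\|f^\sharp\|_p$. When $p>4/3$ it instead gives $\int_A|f^\sharp|^{16/9}\ge c_\eta'\int_S|f^\sharp|^{16/9}$, which by the pointwise comparability secured in Step~1 translates to $\|f^\sharp - 1_A f^\sharp\|_p \le (1-c_\eta'')\|f^\sharp\|_p$.

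\emph{Step 3 (reassembly).} Since $A\subset S$ and $f^\flat$ is supported in $E\setminus S$, the functions $f^\sharp - 1_A f^\sharp$ and $f^\flat$ are disjointly supported, so
\[\|f - 1_A f\|_p^p = \|f^\sharp - 1_A f^\sharp\|_p^p + \|f^\flat\|_p^p \le (1-c_\eta'')^p\|f^\sharp\|_p^p + \tau^p|E|.\]
Hausdorff--Young gives $\|f\|_p \ge \|\widehat{f}\|_q \ge \eta|E|^{1/p}$, hence $\tau^p|E| \le (\tau/\eta)^p\|f\|_p^p$. Taking $\tau$ sufficiently small (depending only on $\eta$ and $\Lambda$) absorbs this error into the linear-in-$c_\eta''$ gain, giving $\|f - 1_A f\|_p \le (1-c_\eta''')\|f\|_p$. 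Setting $B=E\setminus A$ completes the required partition.

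\textbf{Main obstacle.} The hard step is the norm conversion when $p>4/3$: Lemma~\ref{quasi-y} naturally outputs control in the wrong Lebesgue space, and without additional input the $L^{16/9}$ and $L^p$ norms of $f$ on $A$ are not comparable. The truncation to $\{|f|\ge\tau\}$ in Step~1 is the essential device that enables the comparison. Uniformity of $c_\eta, C_\eta$ over compact $\Lambda\subset (1,2)$ then follows from continuous dependence of all the intermediate constants on $p$.
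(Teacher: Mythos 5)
Your proposal is correct and follows the same route as the paper: reduce the Hausdorff--Young quasi-extremal hypothesis to a Young's-convolution quasi-extremal hypothesis via Lemma~\ref{3lineslemma}, then invoke Lemma~\ref{quasi-y}. The paper's own proof is literally the one-line instruction to combine those two lemmas, and it silently passes over exactly the point you isolate: when $p>4/3$, Lemma~\ref{quasi-y} is applied to $|f|^{4/3}$ with $r=4/3$, so its output controls $\int_B|f|^{16/9}$ rather than $\int_B|f|^p$, and these are not comparable for a general $|f|\le 1_E$. Your truncation to $S=\{|f|\ge\tau\}$, where $|f|^{16/9}$ and $|f|^p$ are pointwise comparable, is a legitimate way to close that gap (an alternative that avoids modifying $f$ is a level-set argument on $A$ together with the lower bound $\int_E|f|^{16/9}\ge\|\widehat f\|_2^2\ge \eta^q|E|$, obtained by interpolating $\|\widehat f\|_q\le\|\widehat f\|_2^{2/q}\|f\|_1^{1-2/q}$), so your write-up actually supplies detail the paper omits.

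One bookkeeping caution in your Step 3: since $c_\eta''$ depends on $\tau$ (through the comparability constant $\tau^{|16/9-p|}$), choosing $\tau$ ``sufficiently small'' to absorb the error $(\tau/\eta)^p\|f\|_p^p$ is potentially circular, and you do not verify that it is not. It does work, because $|p-16/9|<p$ uniformly on $\Lambda$, so the gain degrades like $\tau^{|p-16/9|}$ while the error decays like $\tau^p$; but a cleaner fix is to keep the exact identity
\begin{equation*}
\|f-1_Af\|_p^p=\|f^\sharp-1_Af^\sharp\|_p^p+\|f^\flat\|_p^p\le \|f\|_p^p-\bigl(1-(1-c_\eta'')^p\bigr)\|f^\sharp\|_p^p
\end{equation*}
and use $\|f^\sharp\|_p^p\ge\|\widehat{f^\sharp}\|_q^p\ge(\eta/2)^p|E|\ge(\eta/2)^p\|f\|_p^p$; then no absorption is needed and one may simply fix $\tau=\eta/2$, so all constants depend only on $\eta$ and $\Lambda$.
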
 
\begin{proof} Combine Lemmas \ref{3lineslemma} and \ref{quasi-y}.
\end{proof}

\begin{lemma}\label{quasistructuredual} Let $d\ge 1$, let $\Lambda\subset (2,\infty)$ be compact, and let $\eta\in(0,1]$. There exist $C_\eta,c_\eta>0$ with the following property for all $q\in\Lambda$. Suppose that $0\not=f\in L^{q'}(\R^d)$ satisfies $\|\widehat{f}\|_{q,*}\ge \eta\|f\|_p$. Then there exist a proper continuum multiprogression $P$ and a disjointly supported Lebesgue measurable decomposition $f=g+h$ such that 
\begin{align*}
    g\prec P,\\
\|g\|_p\ge c_\eta\|f\|_p\\
\|g\|_\infty |P|^{1/p}\le C_\eta\|f\|_p \\
\text{rank }P\le C_\eta.
\end{align*}

\end{lemma}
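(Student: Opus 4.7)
The plan is to mirror the proof of Proposition~\ref{quasistructure}: derive a quasi-extremal Young's convolution inequality for $f$ (an analog of Lemma~\ref{3lineslemma}), then invoke Lemma~\ref{quasi-y} to extract the multiprogression structure. The essential new difficulty is that $f$ is no longer bounded by an indicator, so an additional dyadic reduction step is needed before Lemma~\ref{quasi-y} becomes applicable.

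For the dual three-lines inequality, first choose a measurable $E$ with $|E|^{-1/p}\int_E|\widehat{f}|\gtrsim \eta\|f\|_p$. Cauchy--Schwarz gives $\int_E|\widehat{f}|^2\gtrsim \eta^2\|f\|_p^2|E|^{2/p-1}$, and the identity $|\widehat{f}|^2=\widehat{f*\widetilde{f}}$ (with $\widetilde{f}(x)=\overline{f(-x)}$) combined with Parseval rewrites the left side as $\int\widehat{1_E}(f*\widetilde{f})$. For $p\le 4/3$, H\"older with conjugate exponents $(p/(2(p-1)),\,p/(2-p))$ together with the Hausdorff--Young bound $\|\widehat{1_E}\|_{p/(2(p-1))}\le |E|^{(2-p)/p}$ (valid exactly when $p\le 4/3$) make the $|E|$ factors cancel, yielding
\[ \||f|*|f|\|_{t} \;\ge\; \|f*\widetilde{f}\|_t \;\gtrsim\; \eta^2\|f\|_p^2, \qquad t=p/(2-p). \]
For $p>4/3$, a three-lines interpolation paralleling Lemma~\ref{3lineslemma} produces $\||f|^{4/3}*|f|^{4/3}\|_2\gtrsim \eta^\gamma\|f\|_p^{8/3}$ for some $\gamma=\gamma(p)>0$.

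Next, decompose $f=\sum_n f_n$ along dyadic level sets $S_n:=\{2^n\le|f|<2^{n+1}\}$, so that $a_n:=\|f_n\|_p\sim 2^n|S_n|^{1/p}$ and $\sum a_n^p\sim\|f\|_p^p$. Bilinearly expanding $|f|*|f|$ and applying Young's inequality with exponents $(1,t)$ (respectively $(t,1)$) to each cross term yields the off-diagonal estimate
\[ \||f_n|*|f_m|\|_t \;\lesssim\; a_{\min(n,m)}^{2-p}\,a_{\max(n,m)}^{p}\,2^{-(p-1)|n-m|}, \]
which decays geometrically in $|n-m|$. A Schur test combined with pigeonholing against $\sum a_n^p\sim\|f\|_p^p$ then isolates either a single scale or a pair $(n_0,m_0)$ contributing $\gtrsim \eta^{O(1)}\|f\|_p^2$ to the convolution sum. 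Replacing $f$ by $f':=f_{n_0}+f_{m_0}$ produces a function with $|f'|\le 2^{\max(n_0,m_0)+2}\cdot 1_{E_0}$ on $E_0:=S_{n_0}\cup S_{m_0}$ and $\|f'*\widetilde{f'}\|_t\gtrsim \eta^{O(1)}|E_0|^{2/p}$, placing us in the indicator-bounded setting of Lemma~\ref{quasi-y}.

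Applying Lemma~\ref{quasi-y} (with $r=p$) then provides a proper continuum multiprogression $P$ and a partition $E_0=A\cup B$ with $A\subset P$, $|P|\le C_\eta|A|$, rank $P\le C_\eta$, such that the $A$-portion of $f'$ retains a positive fraction of its $L^p$ mass. Setting $g:=1_A f$ and $h:=f-g$ realizes the required decomposition: $g$ is supported on $A\subset P$; $\|g\|_p\gtrsim c_\eta\|f'\|_p\gtrsim c_\eta^2\|f\|_p$; the bounds $\|g\|_\infty\le 2^{\max(n_0,m_0)+1}$ and $|P|\le C_\eta|E_0|$ combine to give $\|g\|_\infty|P|^{1/p}\lesssim 2^{\max(n_0,m_0)}|E_0|^{1/p}\lesssim\|f\|_p$; and rank $P\le C_\eta$. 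The principal obstacle is the dyadic reduction in Step~2: a priori infinitely many scales may contribute to the convolution sum, and it is precisely the exponential off-diagonal damping from Young's inequality together with the $\ell^p$ constraint on $(a_n)$ that allows localization to finitely many scales with only polynomial loss in $\eta$, paralleling the passage from indicator-bounded to general $L^p$ quasi-extremizers in \cite{c1}.
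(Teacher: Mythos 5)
Your argument is correct in outline, but it takes a genuinely different (and much longer) route than the paper. The paper's proof is a one-line reduction: since $\|\widehat{f}\|_{q,*}\le\|\widehat{f}\|_q$ (H\"older, as noted after Definition \ref{defdual}), the hypothesis already gives $\|\widehat{f}\|_q\ge\eta\|f\|_p$, and the conclusion is then exactly Proposition 6.4 of \cite{c1}, Christ's structure theorem for \emph{general} $L^p$ quasi-extremizers of Hausdorff--Young; the paper's Lemma \ref{quasi-y} is only the indicator-bounded piece of that machinery, and the ``new difficulty'' you identify (no indicator bound) is precisely what Proposition 6.4 already absorbs. What you propose is in effect a reconstruction of that proposition: your Cauchy--Schwarz/Parseval/H\"older step for $p\le 4/3$ has correct exponent bookkeeping, the off-diagonal Young bound $a_{\min}^{2-p}a_{\max}^{p}2^{-(p-1)|n-m|}$ is right, and the pigeonhole can be completed (pairs with both dyadic masses below $\eta^{O(1)}$ contribute $o(\eta^2)$, and the geometric factor forces $|n_0-m_0|\lesssim\log(1/\eta)$, which is also what makes your final bound $\|g\|_\infty|P|^{1/p}\le C_\eta\|f\|_p$ rather than $O(\|f\|_p)$ as written). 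Two details would need writing out on your route: the quantity you actually control is the autocorrelation $f*\widetilde f$, so you should invoke the two-function version of Christ's Lemma 6.1 or symmetrize rather than assert $\|f*\widetilde f\|_t\le\||f|*|f|\|_t$; and for $p>4/3$ the three-lines argument for unbounded $f$ controls $\||f|^{3p/4}*|f|^{3p/4}\|_2$, not $\||f|^{4/3}*|f|^{4/3}\|_2$, so the dyadic step must be run on $|f|^{3p/4}$ with $(r,t)=(4/3,2)$ and the resulting $L^{4/3}$ mass fraction converted back to an $L^p$ fraction using near-constancy on the selected level sets. These are standard repairs, so your approach works; the citation route is simply shorter and loses nothing.
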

\begin{proof}
This follows from Proposition 6.4 in \cite{c1} since $\|f\|_p\eta\le \|\widehat{f}\|_{q,*}\le \|\widehat{f}\|_q$. 
\end{proof}
\end{section}

\begin{section}{Multiprogression structure of near-extremizers}

The following is a restatement of Lemma 5.5 of \cite{c1}, included here for the reader's convenience.  
\begin{lemma}[Compatibility of nonnegligibly interacting multiprogressions]\label{multicompat} Let $d\ge 1$. Let $\Lambda$ be a compact subset of $(1,2)$. Let $\lambda>0$ and $R<\infty$. There exists $C<\infty$, depending only $\lambda,R,d,\Lambda$, with the following property. Let $p\in\Lambda$. Let $P,Q\subset{\R^d}$ be nonempty proper continuum multiprogressions of ranks $\le R$. Let $\p\prec P$ and $\psi\prec Q$ be functions that satisfy $\|\p\|_\infty|P|^{1/p}\le 1$ and $\|\psi\|_\infty|Q|^{1/p}\le 1$. If 
\[ \|\widehat{\p}\widehat{\psi}\|_{q/2}\ge\lambda\]
then
\[ \max(|P|,|Q|)\le C\min(|P|,|Q|)\]
\[ |P+Q|\le C\min(|P|,|Q|). \]

\end{lemma}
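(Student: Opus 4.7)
The plan is to translate the Fourier-analytic hypothesis into an additive-combinatorial statement about $P$ and $Q$ by exploiting the identity $\widehat{\varphi}\widehat{\psi}=\widehat{\varphi*\psi}$ together with the fact that $\varphi*\psi$ is supported in $P+Q$. The normalizations $\|\varphi\|_\infty|P|^{1/p}\le 1$ and $\|\psi\|_\infty|Q|^{1/p}\le 1$, combined with the support conditions, yield the interpolated bounds $\|\varphi\|_r\le|P|^{1/r-1/p}$ and $\|\psi\|_r\le|Q|^{1/r-1/p}$ for each $r\ge 1$. The overall approach mirrors Christ's proof of Lemma~5.5 in \cite{c1}: first extract the size comparability by standard Young and Hausdorff--Young estimates, then extract the sumset bound via the continuum Pl\"unnecke--Ruzsa machinery.

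For the size comparability, assume without loss of generality that $|P|\le|Q|$. When $q\ge 4$, Hausdorff--Young with exponent $s=(q/2)'=q/(q-2)\in(1,2]$ gives $\lambda\le\|\widehat{\varphi*\psi}\|_{q/2}\le\|\varphi*\psi\|_s$; when $q\in(2,4)$, an analogous lower bound on $\|\varphi*\psi\|_s$ for appropriate $s\ge 2$ is obtained by interpolating between Plancherel's identity and $\|\widehat{\varphi*\psi}\|_\infty\le\|\varphi\|_1\|\psi\|_1$. On the other hand, Young's convolution inequality together with the $L^\infty$ and $L^1$ control on $\varphi,\psi$ yields
\[ \|\varphi*\psi\|_s^s\le\|\varphi*\psi\|_\infty^{s-1}\|\varphi*\psi\|_1\le\bigl(|P|^{1/q}|Q|^{-1/p}\bigr)^{s-1}\bigl(|P|^{1/q}|Q|^{1/q}\bigr). \]
Using $1/p+1/q=1$ and $s=q/(q-2)$, the exponents on $|P|$ and $|Q|$ collapse to $\lambda^q\le|P|/|Q|$, so $|Q|\le\lambda^{-q}|P|$, giving $\max(|P|,|Q|)\le C\min(|P|,|Q|)$ with $C$ depending only on $\lambda$ and $q$.

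For the sumset bound $|P+Q|\le C\min(|P|,|Q|)$, deeper additive information is required. The plan is to apply level-set pigeonholing to $\varphi*\psi$ to produce a subset $S\subset P+Q$ and a height $\mu>0$ such that $|\varphi*\psi|\gtrsim\mu$ on $S$, with $|S|\mu^{q/2}$ bounded below in terms of $\lambda$, $|P|$, and $|Q|$. The existence of such a sizeable, essentially flat level set of a convolution of majorants of bounded-rank continuum multiprogressions forces a lower bound on the additive energy of $P$ and $Q$; the continuum Pl\"unnecke--Ruzsa estimates for proper multiprogressions developed in \S 5 of \cite{c1} then deliver the desired upper bound on $|P+Q|$. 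The main obstacle is precisely this last step: the continuum Pl\"unnecke--Ruzsa machinery is technically heavy and constitutes the core additive-combinatorial content of \cite{c1}. Since the present lemma is explicitly imported from \cite[Lemma~5.5]{c1}, I would invoke this machinery as a black box rather than reproduce it.
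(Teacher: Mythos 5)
The paper offers no proof of this lemma to compare against: it is imported verbatim as Lemma 5.5 of \cite{c1}, so the legitimate ``black box'' here is that lemma itself, which is exactly how the paper uses it. Your proposal instead re-derives part of it, and the explicit steps you do give contain two genuine problems. First, in the size-comparability step, the $q\ge4$ computation is fine (and uses nothing about multiprogressions), but the $q\in(2,4)$ branch rests on an interpolation that does not exist: you propose to dominate $\|\widehat{\varphi*\psi}\|_{q/2}$, with $q/2\in(1,2)$, by interpolating Plancherel against $\|\widehat{\varphi*\psi}\|_\infty\le\|\varphi\|_1\|\psi\|_1$. Log-convexity of Lebesgue norms controls $\|\widehat{\varphi*\psi}\|_r$ only for $r$ between the endpoint exponents, i.e.\ $r\in[2,\infty]$, and on $\R^d$ an $L^{q/2}$ norm with $q/2<2$ is simply not dominated by the $L^2$ and $L^\infty$ norms (slowly decaying tails). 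The step is repairable --- e.g.\ split $\|\widehat{\varphi}\,\widehat{\psi}\|_{q/2}\le\|\widehat{\varphi}\|_a\|\widehat{\psi}\|_b$ by H\"older with $1/a+1/b=2/q$ and $a,b\ge2$ chosen unequal, then apply Hausdorff--Young to each factor (this treats all $q\in(2,\infty)$ at once); or keep your route but replace the $L^\infty$ endpoint by $\|\widehat{\varphi*\psi}\|_1\le\|\varphi\|_2\|\psi\|_2$, which lies on the correct side of $q/2$ --- but as written the step fails.

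The more serious gap is in the sumset bound, which is the real content. The chain ``flat level set $\Rightarrow$ lower bound on the additive energy of $(P,Q)$ $\Rightarrow$ Pl\"unnecke--Ruzsa $\Rightarrow$ $|P+Q|\le C\min(|P|,|Q|)$'' is not a valid implication: an energy lower bound by itself only yields, via Balog--Szemer\'edi--Gowers, large \emph{subsets} of $P$ and $Q$ with controlled sumset, and Pl\"unnecke--Ruzsa needs a small-doubling hypothesis you have not yet produced; neither controls the full $P+Q$ without using the structure of $P$ and $Q$. What makes the conclusion true for the full sets is precisely that $P,Q$ are proper of rank $\le R$, so $|P-P|\le 2^{R+d}|P|$ and $|Q-Q|\le 2^{R+d}|Q|$, combined with Ruzsa's triangle inequality; and your own display already hands you the needed input, making the energy/BSG detour unnecessary. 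Indeed, since $\varphi*\psi\le|P|^{-1/p}|Q|^{-1/p}\,1_P*1_Q$ pointwise, the lower bound on $\|\varphi*\psi\|_\infty$ extracted from $\lambda^s\le\|\varphi*\psi\|_\infty^{s-1}\|\varphi*\psi\|_1$ (or from the repaired $L^2$ lower bound when $q<4$) produces a point $x_0$ with $|P\cap(x_0-Q)|\ge c(\lambda,q)\,(|P||Q|)^{1/2}$, where $c(\lambda,q)$ is a power of $\lambda$. Taking $A=P\cap(x_0-Q)$, the triangle inequality $|A|\,|P+Q|\le|P-A|\,|A+Q|\le|P-P|\,|Q-Q|$ gives $|P+Q|\le C(R,d,q)\,c(\lambda,q)^{-1}(|P||Q|)^{1/2}$, which is $\le C\min(|P|,|Q|)$ once the first conclusion is known (in fact the bound $|P\cap(x_0-Q)|\le\min(|P|,|Q|)$ re-proves that conclusion as well). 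If you prefer to keep the energy route, you must at least explain how properness and bounded rank are used to pass from large subsets back to the full sumset; as written that step is missing, and simply deferring it to ``the machinery of \S5 of \cite{c1}'' is no more informative than the paper's own citation of Lemma 5.5.
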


\begin{lemma}\label{simplenearextstructure}Let $d\ge 1$, and let $\Lambda\subset(1,2)$ be a compact set. For any $\epsilon>0$ there exist $\delta>0$, $N_\epsilon<\infty$, and $C_\epsilon<\infty$ with the following property for all $p\in\Lambda$. Let $|E|<\infty$ and $|f|\le 1_E$ be such that  $\|\widehat{f}\|_q\ge (1-\delta){\bf{B}}_{q,d}|E|^{1/p}$. Then there exist a measurable decomposition $f=g+h$, where $g=g1_A$, $h=h1_B$, and $A\cap B=\emptyset$, and continuum multiprogressions $\{P_i:1\le i\le N_\epsilon\}$ such that 
\[ |B|\le \epsilon |E| \]
\[ \sum_i|P_i|\le C_\epsilon |E| \]
\[ A\subset \overset{N_\epsilon}{\underset{i=1}{\cup}} P_i\]
\[ \emph{rank}\,P_i\le C_\epsilon\]
\[\|g\|_p\ge c_\delta \|f\|_p.\]
\end{lemma}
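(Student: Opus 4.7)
The plan is to iteratively peel off arithmetically-structured pieces of $f$, invoking Proposition \ref{quasistructure} to extract a subset of the current support contained in a low-rank multiprogression of comparable measure, while Lemma \ref{noslacking} (no slacking) guarantees that the unallocated remainder is still a quasi-extremizer at its own scale. The iteration halts once the unallocated piece has measure below $\epsilon|E|$.

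Concretely, fix $\epsilon>0$ and choose $\delta=\delta(\epsilon)$ so that $C_0\delta\le\epsilon$, where $C_0$ is the no-slacking constant; set $\eta:=c\delta$, the corresponding quasi-extremal parameter produced by no slacking. Start with $f_0:=f$, $E_0:=E$. Having defined $f_k=1_{E_k}f$, stop if $|E_k|\le\epsilon|E|$; otherwise apply Lemma \ref{noslacking} to the decomposition $f=1_{E\setminus E_k}f+f_k$. Since $|E_k|>\epsilon|E|\ge C_0\delta|E|$, the conclusion is $\|\widehat{f_k}\|_q\ge c\delta|E|^{1/p}\ge \eta|E_k|^{1/p}$, so $f_k$ is an $\eta$-quasi-extremizer at scale $|E_k|$. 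Proposition \ref{quasistructure} then produces $A_k\subseteq E_k$ and a proper continuum multiprogression $P_k$ with $A_k\subseteq P_k$, $|P_k|\le C_\eta|A_k|$, $\text{rank}\,P_k\le C_\eta$, and $\|f_k-1_{A_k}f_k\|_p\le(1-c_\eta)\|f_k\|_p$. Setting $f_{k+1}:=1_{E_{k+1}}f$ with $E_{k+1}:=E_k\setminus A_k$, the $L^p$ norms decay geometrically: $\|f_k\|_p\le(1-c_\eta)^k\|f\|_p$.

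Termination after $N_\epsilon=O_\eta(\log(1/\delta))$ steps is forced by Hausdorff-Young: $\|\widehat{f_k}\|_q\le{\bf C}_p^d\|f_k\|_p\le {\bf C}_p^d(1-c_\eta)^k|E|^{1/p}$ must still exceed the no-slacking lower bound $c\delta|E|^{1/p}$, which fails once $(1-c_\eta)^k<c\delta/{\bf C}_p^d$. Letting $N$ be the terminal index, set $A:=E\setminus E_N$, $B:=E_N$, $g:=1_Af$, $h:=1_Bf$; then $|B|\le\epsilon|E|$, the $A_k$ are pairwise disjoint so $\sum_k|P_k|\le C_\eta\sum_k|A_k|\le C_\eta|E|$, each $P_k$ has rank at most $C_\eta$, and $A\subseteq\bigcup_k P_k$. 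For the remaining bound, the near-extremal hypothesis and Hausdorff-Young give $\|f\|_p\ge c_1|E|^{1/p}$ with $c_1:=(1-\delta){\bf B}_{q,d}/{\bf C}_p^d$ (uniformly for $p\in\Lambda$), while $\|h\|_p^p\le|B|\le\epsilon|E|\le\epsilon c_1^{-p}\|f\|_p^p$; for $\epsilon$ small enough (the larger-$\epsilon$ case being weaker and following from the smaller-$\epsilon$ result) this yields $\|g\|_p\ge 2^{-1/p}\|f\|_p$.

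The main points requiring attention are bookkeeping: one must check that the constants $C_\eta,c_\eta$ depending on $\eta=c\delta$ remain fixed throughout the iteration (so that the iteration count $N_\epsilon$ and the cumulative rank and size bounds are finite and depend only on $\epsilon$), and that all constants are uniform for $p$ in the compact set $\Lambda\subset(1,2)$; the latter is immediate from continuity in $p$ of the Hausdorff-Young constant, of ${\bf B}_{q,d}$, and of the parameters in Lemma \ref{noslacking} and Proposition \ref{quasistructure}. The only genuine subtlety is realizing that no slacking gives a Fourier lower bound at the original scale $|E|$, which is then automatically at least as strong at the smaller scale $|E_k|$, so that repeated application of Proposition \ref{quasistructure} uses a single fixed value of $\eta$.
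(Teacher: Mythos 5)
Your proposal is correct and takes essentially the same route as the paper: iteratively apply Proposition \ref{quasistructure}, using Lemma \ref{noslacking} to guarantee the unallocated remainder is still a quasi-extremizer, and halt once its measure drops below $\epsilon|E|$. Your bookkeeping differs only cosmetically — a single fixed parameter $\eta=c\delta$ throughout, termination via the geometric decay of $\|f_k\|_p$ combined with Hausdorff--Young (arguably cleaner than the paper's stated counting argument, since the paper's sets $B_i$ are nested rather than disjoint), and the bound $\|g\|_p\ge c\|f\|_p$ deduced from $\|h\|_p^p\le |B|\le\epsilon|E|$ instead of from the first step of the iteration — none of which changes the substance.
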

\begin{proof} We define an iterative process following the proof of Theorem 7.1 from \cite{c1}. Setting $\eta_\delta=1-\delta$, we may apply Proposition \ref{quasistructure} to obtain a disjoint decomposition $E=A_1\cup B_1$ with a multiprogression $P_1$ satisfying 
\[ |P_1|\le C_{\eta_\delta}|A_1|,\quad\text{rank }P_1\le C_{\eta_\delta},\quad \|1_{A_1}f\|_p\ge c_{\eta_\delta}\|f\|_p.\]
Suppose that $|B_1|>\epsilon|E|$ (the case $|B_1|\le \epsilon |E|$ will be analyzed below). By Lemma \ref{noslacking} with $\delta<\epsilon/C_0$, 
\[ \|\widehat{1_{B_1}f}\|_q\ge \frac{c}{C_0}\epsilon |E|^{1/p},\]
where $c, C_0$ are as in the lemma. Define $\eta_\epsilon =\frac{c}{C_0}\epsilon$. Then we apply Proposition \ref{quasistructure} to $1_{B_1}f$ to obtain a disjoint decomposition $B_1=A_2\cup B_2$ with the corresponding conclusions. 

For the $k$-th step in the process, we halt if $|B_{k-1}|\le \epsilon|E|$. If $|B_{k-1}|>\epsilon |E|$, then by Lemma \ref{noslacking}, we have $\|\widehat{1_{B_{k-1}}f}\|_q\ge \eta_\epsilon |E|^{1/p}$. Then applying Proposition \ref{quasistructure}, we get $B_{k-1}=A_k\cup B_k$ with the conclusions of the proposition.

We note that this process terminates after finitely many steps since all of the $B_i$ are disjoint and after $m$ steps, $|E|\ge |B_1|+\cdots+|B_m|>m\epsilon |E|$. Thus we may suppose we have obtained a disjoint decomposition 
\[ E=A_1\cup\cdots\cup A_n\cup B_n\]
where $|B_i|>\epsilon|E|$ for $1\le i<n$ and $|B_n|\le \epsilon|E|$. We also have multiprogressions $P_i$ satisfying $|P_1|\le C_{\eta_\delta}|A_1|$, $\text{rank }P_1\le C_{\eta_\delta}$ and for $1<i\le n$, $|P_i|\le C_{\eta_\epsilon}|A_i|$, $\text{rank }P_i\le C_{\eta_\epsilon}$. Thus 
\[ \sum_i|P_i|\le C_\epsilon |E|,\]
$A:=\underset{i}{\cup} A_i\subset \underset{i}{\cup} P_i$, $\text{rank }P_i\le C_\epsilon$, and 
\[ \|1_Af\|_p\ge \|1_{A_1}f\|_p\ge c_\delta \|f\|_p,\]
as desired.
\end{proof} 

\begin{lemma}[More structured decomposition]\label{nearextstructure} Let $d\ge 1$, and let $\Lambda\subset(1,2)$ be a compact set. For any $\epsilon>0$ there exist $\delta>0$, $N_\epsilon<\infty$, and $C_\epsilon<\infty$ with the following property for all $p\in\Lambda$. Let $|E|<\infty$ and $|f|\le 1_E$ be such that  $\|\widehat{f}\|_q\ge (1-\delta){\bf{B}}_{q,d}|E|^{1/p}$. Then there exist a measurable decomposition $f=g+h$, where $g=g1_A$, $h=h1_B$, and $A\cap B=\emptyset$, and a continuum multiprogression $P$ such that 
\[ |B|\le \epsilon |E| \]
\[ |P|\le C_\epsilon |E| \]
\[ A\subset P\]
\[ \emph{rank}\,P\le C_\epsilon.\]
\end{lemma}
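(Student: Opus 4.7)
The plan is to start from the multi-multiprogression decomposition produced by Lemma~\ref{simplenearextstructure} and consolidate the resulting collection $\{P_i\}_{i=1}^{n}$ (with $n \le N_{\epsilon}$) into a single multiprogression $P$. The two main ingredients will be the cooperation Lemma~\ref{coop}, used to force pairwise interaction between the pieces, and the compatibility Lemma~\ref{multicompat}, used to extract Freiman-type sumset bounds that allow merging.

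First I would apply Lemma~\ref{simplenearextstructure} with a parameter $\epsilon' \ll \epsilon$ to obtain a disjoint decomposition $E = A_1 \cup \cdots \cup A_n \cup B$ with $A_i \subset P_i$, $|P_i| \le C_{\epsilon'}|A_i|$, $\text{rank}(P_i) \le C_{\epsilon'}$, $n \le N_{\epsilon'}$, and $|B| \le \epsilon'|E|$. I would then prune any $A_i$ with $|A_i| \le \eta^p|E|$, where $\eta$ is chosen so that $N_{\epsilon'}\eta^p \le \epsilon'$, by moving these small pieces into $B$. This keeps $|B| \le 2\epsilon'|E| \le \epsilon|E|$ after taking $\epsilon' = \epsilon/2$, while ensuring each surviving $A_i$ satisfies $|A_i| \ge \eta^p|E|$.

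Second, I would establish pairwise compatibility between surviving $P_i, P_j$. Set $\phi_i := |P_i|^{-1/p} 1_{A_i} f$ and $\phi_j$ analogously, so that $\phi_i \prec P_i$, $\phi_j \prec P_j$, and $\|\phi_i\|_\infty |P_i|^{1/p}, \|\phi_j\|_\infty |P_j|^{1/p} \le 1$, which are exactly the normalizations required by Lemma~\ref{multicompat}. To obtain the lower bound $\|\widehat{\phi_i}\widehat{\phi_j}\|_{q/2} \ge \lambda(\epsilon)$ needed to invoke that lemma, I would first apply Lemma~\ref{coop} to the decomposition $f = 1_{A_i \cup A_j} f + 1_{E \setminus (A_i \cup A_j)} f$, valid since both pieces have measure at least $\eta^p|E|$, yielding a lower bound on $\|\widehat{1_{A_i \cup A_j} f} \cdot \widehat{1_{E \setminus (A_i \cup A_j)} f}\|_{q/2}$. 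I would then verify via Lemma~\ref{noslacking} that $1_{A_i \cup A_j} f$ itself remains near-extremal with respect to $A_i \cup A_j$, and apply Lemma~\ref{coop} a second time to the splitting $1_{A_i \cup A_j} f = 1_{A_i} f + 1_{A_j} f$ to extract $\|\widehat{1_{A_i} f} \cdot \widehat{1_{A_j} f}\|_{q/2} \ge c'\eta^p$. Rescaling by $|P_i|^{1/p}|P_j|^{1/p}$ (bounded in terms of $\epsilon'$) yields $\|\widehat{\phi_i}\widehat{\phi_j}\|_{q/2} \ge \lambda(\epsilon)$, so Lemma~\ref{multicompat} delivers $|P_i| \sim |P_j|$ together with $|P_i + P_j| \le C_\epsilon \min(|P_i|, |P_j|)$.

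Third, with uniform pairwise sumset control and comparable sizes across $n \le N_{\epsilon'}$ multiprogressions of uniformly bounded rank, I would merge the $P_i$ into a single multiprogression $P$ of rank bounded in terms of $\epsilon$, with $|P| \le C_\epsilon \max_i |P_i| \le C_\epsilon |E|$ and containing $A = \bigcup_i A_i$, by the analogous iterative Freiman-type consolidation used at the end of the proof of Theorem~7.1 in \cite{c1}. The main obstacle I anticipate is the decoupling inside Step~2: direct application of Lemma~\ref{coop} pairs $1_{A_i \cup A_j} f$ with the full complement rather than with $1_{A_j} f$ specifically, and transferring the bound to pairwise form requires verifying that each intermediate restriction of $f$ retains a near-extremal Fourier norm relative to its own support. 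This forces a careful ordering in which the smallness parameters $\delta$, $\epsilon'$, and $\eta$ must be selected.
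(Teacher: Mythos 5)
There is a genuine gap at the heart of your Step~2. Lemma~\ref{coop} can only be applied to a function that is $(1-\delta)$-near-extremal \emph{relative to its own support}, and its conclusion is vacuous unless $\delta$ is small compared with $\eta^p$, the relative measure of the smaller piece of the splitting. Your second application of cooperation, to the splitting $1_{A_i\cup A_j}f=1_{A_i}f+1_{A_j}f$ inside $A_i\cup A_j$, therefore needs $\|\widehat{1_{A_i\cup A_j}f}\|_q\ge (1-\delta')\mathbf{B}_{q,d}|A_i\cup A_j|^{1/p}$ with $\delta'$ small compared to $|A_i|/|A_i\cup A_j|$ and $|A_j|/|A_i\cup A_j|$. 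Lemma~\ref{noslacking} cannot supply this: it only yields a quasi-extremal bound of the form $\|\widehat{h}\|_q\ge c\delta|E|^{1/p}$, which is orders of magnitude weaker than near-extremality with constant $(1-\delta')\mathbf{B}_{q,d}$. And near-extremality simply does not pass to restrictions: if $A_i\cup A_j$ carries only a fixed fraction of $|E|$, the triangle inequality gives at best $\|\widehat{1_{A_i\cup A_j}f}\|_q\ge \mathbf{B}_{q,d}\big((1-\delta)|E|^{1/p}-|E\setminus(A_i\cup A_j)|^{1/p}\big)$, which by strict subadditivity of $t\mapsto t^{1/p}$ falls short of $(1-\delta')\mathbf{B}_{q,d}|A_i\cup A_j|^{1/p}$ by a fixed amount. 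So the pairwise interaction bound $\|\widehat{1_{A_i}f}\cdot\widehat{1_{A_j}f}\|_{q/2}\ge c'\eta^p$ for \emph{every} pair is not established (and is not something the argument at this stage can be expected to deliver); at best a single application of Lemma~\ref{coop} to $f=1_{A_i}f+1_{E\setminus A_i}f$ plus the triangle inequality shows each $A_i$ interacts with \emph{some} other piece, which leaves you needing a further connectivity argument before any merging can happen.

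The paper's proof is organized precisely to avoid this trap: it never applies cooperation to a restriction supported on a small part of $E$. Instead it grows a single core $G_N\prec P_N$ and, in an inner loop, peels pieces $1_{S_{N,j}}f$ off the complement until the leftover $R_{N,k}$ has measure at most $\tfrac12\epsilon|E|$. Cooperation is then applied to the splitting $G_N+h$ with $h=\sum_j 1_{S_{N,j}}f$, whose support is all of $E$ except the small set $R_{N,k}$, so that $\|\widehat{G_N+h}\|_q\ge(1-\epsilon-\epsilon^{1/p})\mathbf{B}_{q,d}|E|^{1/p}$ and near-extremality relative to its own support survives with only an $O(\epsilon)$ loss; a triangle-inequality pigeonhole over the at most $M_\epsilon$ peeled pieces then forces $\|\widehat{G_N}\,\widehat{1_{S_{N,k}}f}\|_{q/2}\ge\rho|E|^{2/p}$ for some piece, and Lemma~\ref{multicompat} merges that piece's progression into $P_N$; termination comes from the fixed increment in $\|G_N\|_p^p$. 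If you want to salvage your "consolidate the output of Lemma~\ref{simplenearextstructure}" plan, you would have to replace the pairwise double-cooperation step by this kind of core-versus-almost-everything application of Lemma~\ref{coop} (or by a cluster/connectivity argument in which cooperation is always applied to a splitting of the full function $f$), since that is the only regime in which the hypotheses of Lemma~\ref{coop} are actually verifiable.
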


\begin{proof} First we define $E_\lambda=\{x\in E:|f(x)|\le\lambda\}$. Note that by the Hausdorff-Young inequality,
\[ \|\widehat{1_{E_{\lambda}}f}\|_q\le \|1_{E_\lambda}f\|_p\le \lambda|E|^{1/p} .\]
Assume that $|E_\lambda|>\epsilon|E|$. Then by Lemma \ref{noslacking},

\[ \|\widehat{1_{E_\lambda}f}\|_q\ge \frac{c_0\epsilon}{C} |E|^{1/p}:=\eta_\epsilon|E|^{1/p}. \]
Thus if we take $\lambda=\eta_\epsilon \epsilon $, we are guaranteed that $|E_\lambda|<\epsilon|E|$. Now without loss of generality, assume that $|f|\ge \eta_\epsilon\epsilon$ on $E$. 

We define an iterative process with an outer and an inner loop. For the step 1 of the outer loop, letting $\eta_\delta=1-\delta$, apply Proposition \ref{quasistructure} to get $E=A_1\cup B_1$ where $A_1$ is contained in a multiprogression $P_1$ satisfying the conclusions of the proposition. At step $N$ of the outer loop, we have a measurable decomposition 
\[ f=G_N+H_N\]
where $H_N=1_{B_N}H_N$ and $G_N=1_{A_N}G_N$, where $A_N\cap B_N=\emptyset$ and $A_N$ is contained in a multiprogression $P_N$ with $|P_N|\le C_\epsilon|E|$, $\text{rank }P_N\le C_\epsilon$, and $\|G_N\|_p\ge c_\delta \|f\|_p$. If $|B_N|<\epsilon |E|$, then we halt.  Otherwise, initiate step $(N,1)$ of the inner loop. Since $|B_N|\ge \epsilon|E|$, by Lemma \ref{noslacking}, $\|\widehat{1_{B_N}f}\|_q\ge \eta_\epsilon|E|^{1/p}$. Thus we can decompose $B_N$ into $S_{N,1}$ (contained in a multiprogression) and $R_{N,1}$ using Proposition \ref{quasistructure}. The halting criterion for the $(N,j)$th step is $|R_{N,j}|\le \frac{1}{2}\epsilon|E|$ or $\|\widehat{G_N}\widehat{1_{S_{N,j}}f}\|_{q/2}\ge\rho|E|^{2/p}$. If neither is satisfied in step $(N,j)$, then $|R_{N,j}|>\frac{1}{2}\epsilon|E|$, so repeat the argument described for step $(N,1)$ replacing $B_N$ by $R_{N,j}$. After $k$ iterations of the inner loop, we note that
\[|B_N|\ge  |R_{N,1}|+\cdots+|R_{N,k}|\ge k\epsilon|E|,\]
so the inner loop terminates in a maximum of $M_\epsilon$ steps. 

Suppose that the inner loop terminates at step $k$ because $|R_{N,k}|\le \frac{1}{2}\epsilon|E|$ but $\|\widehat{G_N}\widehat{1_{S_{N,k}}f}\|_{q/2}< \rho|E|^{2/p}$. Then $\|\widehat{G_N}\widehat{1_{S_{N,j}}f}\|_{q/2}< \rho|E|^{2/p}$ for $1\le j\le k$. Define $h=\sum\limits_{j=1}^k1_{S_{N,j}}f$. Note that
\begin{equation}\label{structurecontradiction}
\|\widehat{G_N}\widehat{h}\|_{q/2}\le \sum_{j=1}^k\|\widehat{G_N}\widehat{1_{S_{N,j}}f}\|_{q/2}< M_\epsilon \rho|E|^{2/p}.
\end{equation}    

However, $|\text{supp }h|=\sum\limits_{j=1}^k|S_{N,k}|\ge |B_N|-|R_{N,k}|\ge\epsilon|E|-\frac{1}{2}\epsilon|E|=\frac{\epsilon}{2}|E|$ and 
\[ |\text{supp }G_N|=|A_N|\ge \|G_N\|_p^p\ge c_{\delta}\|f\|_p^p\ge c_\delta \eta_\epsilon \epsilon |E|^{1/p}\]
where we used the assumption that $|f|\ge \eta_\epsilon \epsilon$ discussed at the beginning of the proof. Finally note that $\|\widehat{G_N+h}\|
_q\ge\|\widehat{f}\|_q-\|\widehat{1_{R_{N,k}}f}\|q\ge (1-\epsilon-\epsilon^{1/p}){\bf{B}}_{q,d}|E|^{1/p}$. Thus, choosing $\delta$ and $\rho$ small enough, (\ref{structurecontradiction}) contradicts Lemma \ref{coop}. 

Thus the halting criterion for the inner loop yields a function $1_{S_{N,k}}f$ such that  
\begin{equation} \label{terminates1}
\|\widehat{G_N}\widehat{1_{S_{N,k}}f}\|_{q/2}\ge \rho|E|^{2/p}.
\end{equation} 
The function $1_{S_{N,k}}f$ also satisfies 
\begin{equation} \label{terminates2} \|1_{S_{N,k}}f\|_{p}\ge c_\epsilon \|1_{R_{N,k-1}}f\|_p\ge c_\epsilon \eta_\epsilon \epsilon |R_{N,k-1}|^{1/p}\ge c_\epsilon\eta_\epsilon \epsilon^{1+1/p}|E|^{1/p}.
\end{equation} 

If $Q_N$ is the multiprogression associated to $S_{N,k}$, then Lemma \ref{multicompat} (taking $\p=\frac{1}{C_\epsilon |E|}1_{A_N}f$ and $\psi=\frac{1}{C_\epsilon|E|}1_{S_{N,k}}f$, which satisfies the hypotheses for small enough $\rho$) implies that $|P_N+Q_N|\le C'_\epsilon \min(|P_N|,|Q_N|)$. Thus there exists a continuum multiprogression $P_{N+1}$ of rank $\le C_\epsilon$ containing $P_N$ and $Q_N$ and satisfying $|P_{N+1}|\le C_\epsilon|E|$. 

Set $G_{N+1}=G_N+1_{S_{N,k}}f$. Then $H_{N+1}:=f-G_{N+1}$ has support called $B_{N+1}$. If $|B_{N+1}|\le\epsilon|E|$, then we're done. If not, proceed to outer loop step $N+2$. Note that for each outer loop step, we have 
\[ \|G_{N+1}\|_{p}^p\ge \|G_{N}\|_p^p+\|1_{S_{N,k}}f\|_p^p\ge \|G_N\|_p^p+c_\epsilon\eta_\epsilon \epsilon^{p+1}|E|. \]

Thus the outer loop terminates in at most $N_\epsilon$ steps. Note that since the ranks of $P_N$ and $Q_N$ at most add at each step of the outer loop, the rank of the ultimate multiprogression is controlled by $M_\epsilon>0$.

\end{proof}

\begin{lemma}\label{nearextstructuredual} Let $d\ge 1$, and let $\Lambda\subset(1,2)$ be a compact set. For any $\epsilon>0$ there exist $\delta>0$, $N_\epsilon<\infty$, and $C_\epsilon<\infty$ with the following property for all $p\in\Lambda$. Let $|E|<\infty$ and $|f|\le 1_E$ be such that  $\|\widehat{f}\|_{q,*}\ge (1-\delta){\bf{B}}_{q,d}\|f\|_p$. Then there exists a measurable decomposition $f=g+h$ where $g=1_Ag$, $h=1_Bh$, $A\cap B=\emptyset$, and there is a continuum multiprogression $P$ such that 
\[ \|h\|_p\le \epsilon \|f\|_p \]
\[ \|g\|_\infty|P|^{1/p}\le C_\epsilon \|f\|_p \]
\[ A\subset P\]
\[ \emph{rank }P\le C_\epsilon.\]
\end{lemma}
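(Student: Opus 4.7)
The plan is to follow the proof of Lemma \ref{nearextstructure} and substitute each ingredient by its dual analogue: Lemma \ref{noslackingdual} for Lemma \ref{noslacking}, Lemma \ref{coopdual} for Lemma \ref{coop}, and Lemma \ref{quasistructuredual} for Proposition \ref{quasistructure}. The multiprogression compatibility result Lemma \ref{multicompat} applies without modification because its hypothesis is a statement about the $L^{q/2}$ norm of a product of Fourier transforms of multiprogression-supported functions; the elementary H\"{o}lder bridge $\||\widehat{\varphi}|^{1/2}|\widehat{\psi}|^{1/2}\|_{q,*}\le\|\widehat{\varphi}\widehat{\psi}\|_{q/2}^{1/2}$, obtained by applying H\"{o}lder with exponents $(q,q')$ inside the supremum defining $\|\cdot\|_{q,*}$, converts conclusions of Lemma \ref{coopdual} into hypotheses of Lemma \ref{multicompat}.

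I would run an outer loop maintaining, at step $N$, a decomposition $f=G_N+H_N$ with $G_N\prec P_N$ for a proper continuum multiprogression of rank at most $C_\epsilon$, satisfying $\|G_N\|_\infty|P_N|^{1/p}\le C_\epsilon\|f\|_p$ and $\|G_N\|_p\ge c_\delta\|f\|_p$. Initialize by applying Lemma \ref{quasistructuredual} to $f$. If $\|H_N\|_p\le\epsilon\|f\|_p$ at any stage, halt. Otherwise Lemma \ref{noslackingdual} forces $\|\widehat{H_N}\|_{q,*}\ge c_\epsilon\|f\|_p$, licensing a fresh application of Lemma \ref{quasistructuredual} to $H_N$. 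An inner loop then iterates: at step $(N,j)$, Lemma \ref{quasistructuredual} extracts a piece $g^{(N,j)}\prec Q^{(N,j)}$ from the residual $R_{N,j-1}$ of $H_N$ with $\|g^{(N,j)}\|_p\ge c_\epsilon\|R_{N,j-1}\|_p$. The inner loop halts at step $k$ when either (a) $\|R_{N,k}\|_p\le\tfrac12\|H_N\|_p$, or (b) $\|\widehat{G_N}\widehat{g^{(N,k)}}\|_{q/2}\ge\rho\|f\|_p^2$ for a small parameter $\rho$ to be fixed.

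Suppose (a) triggers without (b) ever triggering. Then by disjointness $h:=\sum_{j\le k}g^{(N,j)}$ satisfies $\|h\|_p\ge(1-2^{-p})^{1/p}\|H_N\|_p\ge c\epsilon\|f\|_p$, while $\|G_N\|_p\ge c_\delta\|f\|_p$. Provided $\delta$ is small, Lemma \ref{coopdual} applied to the disjointly supported pair $(G_N,h)$ gives $\||\widehat{G_N}|^{1/2}|\widehat{h}|^{1/2}\|_{q,*}\ge c\delta\|f\|_p{\bf B}_{q,d}$, which the H\"{o}lder bridge promotes to $\|\widehat{G_N}\widehat{h}\|_{q/2}\ge c^2\delta^2\|f\|_p^2{\bf B}_{q,d}^2$. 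The triangle inequality and the negation of (b) give the opposing bound $\|\widehat{G_N}\widehat{h}\|_{q/2}\le M_\epsilon\rho\|f\|_p^2$, where $M_\epsilon$ bounds the number of inner-loop steps via the geometric decay of residuals. Choosing $\rho\ll\delta^2{\bf B}_{q,d}^2/M_\epsilon$ produces a contradiction, forcing (b) to trigger and yielding a cooperating piece $g^{(N,k)}$. Lemma \ref{multicompat}, applied to the normalizations of $G_N$ and $g^{(N,k)}$ matching its hypothesis, then gives $|P_N+Q^{(N,k)}|\le C'_\epsilon\min(|P_N|,|Q^{(N,k)}|)$, so we may absorb $g^{(N,k)}$ into $G_{N+1}:=G_N+g^{(N,k)}$, supported on a proper continuum multiprogression $P_{N+1}$ of controlled rank and measure enclosing $P_N\cup Q^{(N,k)}$, and proceed to outer step $N+1$.

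The principal technical obstacle is propagating the normalization bound $\|G_N\|_\infty|P_N|^{1/p}\le C_\epsilon\|f\|_p$ through the merging step, since Lemma \ref{multicompat} must remain applicable at outer step $N+1$ with a constant independent of $N$; this demands a careful selection of the enclosing multiprogression $P_{N+1}$ and book-keeping of $\|G_{N+1}\|_\infty$ across iterations, taking advantage of the fact that at each absorption $g^{(N,k)}$ and $G_N$ are disjointly supported and both have $\|\cdot\|_\infty|\text{containing multiprogression}|^{1/p}\lesssim\|f\|_p$. Once this is in hand, the positive $L^p$-mass increment $\|G_{N+1}\|_p^p-\|G_N\|_p^p\ge c_\epsilon\|f\|_p^p$ at each outer iteration forces termination in at most $N_\epsilon$ steps, and the final decomposition $g=G_N$, $h=f-G_N$ with $P=P_N$ meets all stated conclusions.
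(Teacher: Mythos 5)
Your outline reproduces the two-loop architecture of the primal proof (Lemma \ref{nearextstructure}), but the key step of the contradiction argument does not go through as written. When the inner loop halts via (a) without (b) ever triggering, you apply Lemma \ref{coopdual} to the pair $(G_N,h)$. That lemma requires the near-extremality hypothesis for the \emph{sum} of the two pieces: it assumes a decomposition $f=g+h$ of a function with $\|\widehat{f}\|_{q,*}\ge(1-\delta){\bf{B}}_{q,d}\|f\|_p$. Here $G_N+h\neq f$; the discarded residual is $R_{N,k}$, and your halting criterion only guarantees $\|R_{N,k}\|_p\le\tfrac12\|H_N\|_p$. Since $\|G_N\|_p\ge c_\delta\|f\|_p$ with $c_\delta$ possibly tiny, $\|H_N\|_p$ can be comparable to $\|f\|_p$, so $\|R_{N,k}\|_p$ can be a fixed fraction (up to one half) of $\|f\|_p$; then $\|\widehat{G_N+h}\|_{q,*}\ge\|\widehat{f}\|_{q,*}-{\bf{B}}_{q,d}\|R_{N,k}\|_p$ only yields roughly $\tfrac12{\bf{B}}_{q,d}\|f\|_p$, which is far from $(1-\delta'){\bf{B}}_{q,d}\|G_N+h\|_p$ for any small $\delta'$. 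This is exactly the point the primal proof is careful about: there the halting criterion is $|R_{N,k}|\le\tfrac12\epsilon|E|$, smallness relative to the \emph{global} quantity $|E|$, which is what lets one check $\|\widehat{G_N+h}\|_q\ge(1-\epsilon-\epsilon^{1/p}){\bf{B}}_{q,d}|E|^{1/p}$ before invoking Lemma \ref{coop}. To repair your version you would need to halt when $\|R_{N,j}\|_p\le c(\epsilon)\|f\|_p$ (termination in boundedly many steps still follows from the geometric decay of the residual) and then verify that $G_N+h$ is an $O_\epsilon(1)$-near-extremizer of the dual inequality before applying Lemma \ref{coopdual}, keeping track of the $\eta^p\ge\delta$ constraint in that lemma.

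It is also worth noting that the paper's own proof of this lemma dispenses with the inner loop entirely: in the dual setting, Lemma \ref{quasistructuredual} applied to the residual $h_n$ (after Lemma \ref{noslackingdual}) already produces a piece $u_n$ with $\|u_n\|_p\ge c_\epsilon\|h_n\|_p\ge c_\epsilon\epsilon\|f\|_p$, so the mass hypothesis of Lemma \ref{coopdual} is met by the pair (structured part, freshly extracted piece) directly, and cooperation plus Lemma \ref{multicompat} merge $Q_n$ into $P_n$ at every step with no contradiction argument needed. Your H\"{o}lder bridge $\||\widehat{\varphi}|^{1/2}|\widehat{\psi}|^{1/2}\|_{q,*}\le\|\widehat{\varphi}\widehat{\psi}\|_{q/2}^{1/2}$ is correct and is in fact the (implicit) step the paper also needs to pass from the conclusion of Lemma \ref{coopdual} to the hypothesis of Lemma \ref{multicompat}. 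The bookkeeping issue you flag for $\|G_N\|_\infty|P_N|^{1/p}$ is not a serious obstacle: the paper settles for the crude bound $\|g_n\|_\infty|P_n|^{1/p}\le (C'_\epsilon)^{n}(n-1)\|f\|_p$, which suffices because the number of iterations is bounded by an $\epsilon$-dependent constant, and the same device would close that part of your argument.
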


\begin{proof} Using the hypothesis $\|\widehat{f}\|_{q,*}\ge (1-\delta){\bf{B}}_{q,d}\|f\|_p$, by Lemma \ref{quasistructuredual} there exists a disjoint decomposition $f=g_1+h_1$ where $g_1$ is supported on a multiprogression $P_1$ with $\text{rank }P_1\le C_\delta$, $\|g_1\|_p\ge c_\delta \|f\|_p$, $\|g_1\|_{\infty}|P_1|^{1/p}\le C_\delta \|f\|_p$. If $\|h_1\|_p<\epsilon \|f\|_p$, then we halt.

To further refine the decomposition in the case that $\|h_1\|_p\ge \epsilon\|f\|_p$, define an iterative process with input $(g_1,h_1)$ and output $(g_2,h_2)$ where $f=g_2+h_2$ and $g_2,h_2$ satisfy certain properties below. Apply Lemma \ref{noslackingdual} to conclude that $\|\widehat{h_1}\|_{q,*}\ge \eta_\epsilon\|f\|_p$ for $\eta_\epsilon>0$. Then apply Lemma \ref{quasistructuredual} to get $h_1=u_1+v_1$ where $u_1$  is supported on a multiprogression $Q_1$, $\text{rank }Q_1\le C_\epsilon$, $\|u_1\|_\infty|Q_1|^{1/p}\le C_\epsilon \|f\|_p$, and $\|u_1\|_p\ge c_\epsilon \|h_1\|_p\ge c_\epsilon \epsilon\|f\|_p$. 

Choose $\delta$ suffciently small to ensure that $c_\delta\ge\epsilon c_\epsilon$. Since $\min(\|g_1\|_p,\|u_1\|_p)\ge\epsilon c_{\epsilon}\|f\|_p$, by  Lemma \ref{coopdual}, $\||\widehat{g_1}|^{1/2}|\widehat{u_1}|^{1/2}\|_{q,*}\ge \rho(\epsilon)\|f\|_p{\bf{B}}_{q,d}$ for $\rho(\epsilon)>0$. But then Lemma \ref{multicompat} (taking $\p=\frac{1}{C_\delta \|f\|_p}g_1$ and $\psi=\frac{1}{C_\epsilon\|f\|_p}u_1$) implies that $\max(|P_1|,|Q_1|)\le C'_\epsilon\min(|P_1|,|Q_1|)$ and $|P_1+Q_1|\le C'_\epsilon \min(|P_1|,|Q_1|)$. Thus there exists a continuum multiprogression $P_{2}$ of rank $\le C_{\epsilon,\delta}$ containing $P_1$ and $Q_1$ and satisfying $|P_{2}|\le C_{\epsilon}$. Define $g_2:= g_1+u_1$ and $h_2:=v_1$. 

If $\|h_2\|_p<\epsilon\|f\|_p$, then halt. If $\|h_2\|_p\ge \epsilon\|f\|_p$, repeat the process described above with input $(g_2,h_2)$. 

After $n$ steps of this iteration, we have a decomposition $f=g_n+h_n$ and a multiprogression $P_n$ of controlled size and rank containing the support of $g_n$ and satisfying $\|g_n\|_\infty|P_n|^{1/p}\le C_{\epsilon}\|f\|_p$, and
\[ \|g_n\|_p^p= \|g_1\|_p^p+\|u_1\|_p^p+\cdots+\|u_{n-1}\|_p^p\ge  (c_\delta^p+(n-1)c_\epsilon^p \epsilon^p )\|f\|_p^p. \]

Thus the loop terminates in at most $n_\epsilon$ steps. Note that since the ranks of $P_n$ and $Q_n$ at most add at each step of the process, the rank of the ultimate multiprogression is controlled by a constant depending on $\epsilon$. Also, $|P_n|\le (C_{\epsilon}')^{n-1}(\min(|P_1|,|Q_1|,\ldots,|Q_{n-1}|)$. 

Finally we note that 
\begin{align*} 
\|g_n\|_\infty |P_n|^{1/p}&\le (C_\epsilon')^{(n-1)} (\|g_1\|_\infty|P_1|^{1/p}+\|u_1\|_\infty|Q_1|^{1/p}+\cdots+\|u_{n-1}\|_\infty|Q_{n-1}|^{1/p}) \\
&\le (C_\epsilon')^{n}(n-1)\|f\|_p.
\end{align*}

\end{proof}

\end{section}

\section{Exploitation of $\Z^\kappa\times\R^d$\label{product}}
\begin{subsection}{Analysis of the discrete Hausdorff-Young inequality}

Let $\T$ denote the quotient group $\R/\Z$. Extend the previous notation and define the Fourier transform $\widehat{\cdot}:\Z^\kappa\times\R^d\to \T^\kappa\times\R^d$ by 
\[ \widehat{f}(\theta,\xi)=\int_{\R^d}\sum_{n\in\Z^\kappa}e^{-2\pi ix\cdot\xi}e^{-2\pi i n\cdot\theta}f(n,x)dx \]
where $\theta\in\T^d$. This can be decomposed as $\F\circ\tilde{\F}$ where 
\[ \F g(\theta,\xi)=\sum_{n\in\Z^\kappa}g(n,\xi)e^{-2\pi i n\cdot \theta} \]
\[ \tilde{\F}f(n,\xi)=\int_{\R^d}f(n,x)e^{-2\pi i x\cdot\xi}dx  . \]
If we treat the operator $\F$ as the partial Fourier transform with respect to the first coordinate and $\tilde{F}$ the corresponding transform for the second coordinate, then we can say $\F\circ\tilde{\F}=\tilde{\F}\circ \F$ (even though the operators on the left and right are not precisely the same).

\begin{lemma} \label{liftedconst=}Let $d,\kappa\ge 1$, and $p\in(1,2)$, $q=p'$. The optimal constant ${\bf{A}}(q,d,\kappa)$ in the inequality 
\begin{equation}
    \|\widehat{f}\|_q\le {\bf{A}}(q,d,\kappa) |E|^{1/p}, \label{discrete}
\end{equation} 
where $E\subset{\Z^\kappa\times\R^d}$ satisfies $|E|<\infty$ and $|f|\le 1_E$, satisfies 
\[ {\bf{A}}(q,d,\kappa)={\bf{B}}_{q,d}.\]
The optimal constant ${\bf{A'}}(q,d,\kappa)$ for the inequality
\[ \|\widehat{f}\|_{q,*}\le {\bf{A'}}(q,d,\kappa)\|f\|_p\]
for $\Z^\kappa\times\R^d$ likewise satisfies ${\bf{A'}}(q,d,\kappa)={\bf{B}}_{q,d}$. 
\end{lemma}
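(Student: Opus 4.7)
The plan is to establish the equality ${\bf{A}}(q,d,\kappa) = {\bf{B}}_{q,d}$ by proving two inequalities. The easy direction ${\bf{A}}(q,d,\kappa) \ge {\bf{B}}_{q,d}$ I would handle by extending a function $g:\R^d\to\C$ with $|g|\le 1_A$ to a function supported on a single fiber: $f(n,x) = g(x)1_{\{n=0\}}$. The measure of the support set is preserved, $|E| = |A|$, and because $\widehat f(\theta,\xi) = \widehat g(\xi)$ is independent of $\theta$ (and $\T^\kappa$ has total mass $1$), one gets $\|\widehat f\|_{L^q(\T^\kappa\times\R^d)} = \|\widehat g\|_{L^q(\R^d)}$. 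Taking the supremum over $g$ then gives the inequality.

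For the reverse direction ${\bf{A}}(q,d,\kappa) \le {\bf{B}}_{q,d}$, the key idea is to fiber $f$ over $\Z^\kappa$, apply the continuous inequality on each fiber, and then apply the Hausdorff-Young inequality $\ell^p(\Z^\kappa)\to L^q(\T^\kappa)$ (whose sharp constant is $1$) in the discrete variable. Concretely, setting $f_n := f(n,\cdot)$ and $E_n := \{x : (n,x)\in E\}$, we have $|f_n|\le 1_{E_n}$ and $\sum_n|E_n| = |E|$, and the continuous inequality gives $\|\widehat{f_n}\|_{L^q(\R^d)} \le {\bf{B}}_{q,d}|E_n|^{1/p}$ for each $n$. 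For each fixed $\xi$, the mapping $\theta\mapsto \widehat f(\theta,\xi) = \sum_n\widehat{f_n}(\xi)e^{-2\pi i n\cdot\theta}$ is the Fourier series of the sequence $(\widehat{f_n}(\xi))_n$, so the discrete Hausdorff-Young bound yields $\|\widehat f(\cdot,\xi)\|_{L^q(\T^\kappa)}^p \le \sum_n|\widehat{f_n}(\xi)|^p$.

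Next I would raise this bound to the $q/p$ power, integrate in $\xi$, and apply Minkowski's integral inequality in $L^{q/p}(\R^d)$, which is valid because $q/p > 1$:
\[ \|\widehat f\|_q^q \le \int_{\R^d}\Big(\sum_n|\widehat{f_n}(\xi)|^p\Big)^{q/p}d\xi \le \Big(\sum_n\|\widehat{f_n}\|_q^p\Big)^{q/p}. \]
Substituting the fiberwise bounds then gives $\|\widehat f\|_q^p \le {\bf{B}}_{q,d}^p\sum_n|E_n| = {\bf{B}}_{q,d}^p|E|$, as desired.

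The dual statement ${\bf{A}}'(q,d,\kappa) = {\bf{B}}_{q,d}$ I expect to obtain by repeating the duality argument of Proposition \ref{dualineq} verbatim in the $\Z^\kappa\times\R^d$ setting: that argument uses only Plancherel's theorem, H\"older's inequality, and $L^p$-duality, all of which remain available here. The main step requiring care will be the correct ordering of the two Hausdorff-Young applications (continuous on fibers first, then discrete in $\theta$) together with the use of Minkowski with exponent $q/p>1$, but I do not foresee any substantive obstacle beyond this bookkeeping.
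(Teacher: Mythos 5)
Your treatment of the primal constant ${\bf{A}}(q,d,\kappa)$ is correct and is essentially the paper's own argument: the single-fiber embedding gives ${\bf{A}}\ge{\bf{B}}_{q,d}$, and your chain (discrete Hausdorff--Young with constant $1$ for each fixed $\xi$, then Minkowski to pass from $L^q_\xi\ell^p_n$ to $\ell^p_nL^q_\xi$, then the $\R^d$ inequality applied to each fiber $f_n$ with $\sum_n|E_n|=|E|$) is exactly the string of inequalities (\ref{eq:3ineqs}), written with explicit exponents instead of mixed-norm notation.

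The dual half, however, has a genuine gap. The duality argument of Proposition \ref{dualineq} does not transfer ``verbatim'': its key step applies the \emph{primal} inequality to the modulated indicator $e^{i\p}1_E$, where $E$ is the set in the definition of $\|\widehat f\|_{q,*}$ and therefore lives in the \emph{frequency} domain. For $f$ on $\Z^\kappa\times\R^d$, the transform $\widehat f$ lives on $\T^\kappa\times\R^d$, so $E\subset\T^\kappa\times\R^d$, and after Parseval and H\"older you need $\| \mathcal{G}(e^{i\p}1_E)\|_{L^q(\Z^\kappa\times\R^d)}\le{\bf{B}}_{q,d}|E|^{1/p}$, where $\mathcal{G}$ denotes the Fourier transform carrying functions on $\T^\kappa\times\R^d$ to functions on $\Z^\kappa\times\R^d$. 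That is a Hausdorff--Young statement for indicator-majorized functions \emph{on the dual group} $\T^\kappa\times\R^d$, which is not the first half of the lemma (that concerns functions on $\Z^\kappa\times\R^d$); the self-duality of $\R^d$ is what lets Proposition \ref{dualineq} close on itself, and it is lost here. The gap is fixable: the same slicing/Minkowski argument, now using that Hausdorff--Young $L^p(\T^\kappa)\to\ell^q(\Z^\kappa)$ also has constant $1$, shows the primal constant on $\T^\kappa\times\R^d$ equals ${\bf{B}}_{q,d}$, but that extra statement must be proved. The paper instead avoids duality entirely and bounds $\int_E|\widehat f|$ directly: for each fixed $\theta$ it applies the $\R^d$ dual inequality $\|\tilde{\F}(\F f(\theta,\cdot))\|_{L^{q,*}_\xi}\le{\bf{B}}_{q,d}\|\F f(\theta,\cdot)\|_{L^p_x}$, then H\"older in $\theta$, Minkowski, and discrete Hausdorff--Young.

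A smaller omission: you do not address the lower bound ${\bf{A}}'\ge{\bf{B}}_{q,d}$. Repeating the second half of Proposition \ref{dualineq} in the lifted setting produces a dual near-extremizer living on $\T^\kappa\times\R^d$ --- again the wrong group for the inequality as stated. The correct (and easy) route is the same single-fiber lifting you used in the primal case: given $f\in L^p(\R^d)$ and $E\subset\R^d$, lift $f$ to the slice $n=0$ and test $\widehat{f_0}$ against $\T^\kappa\times E$, which has the same measure as $E$, so that ${\bf{A}}'\ge\sup\|\widehat f\|_{q,*}/\|f\|_p={\bf{B}}_{q,d}$ by Proposition \ref{dualineq}. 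These points are worth spelling out, since the non-interchangeability of the $\Z^\kappa$ and $\T^\kappa$ roles in the dual setting is precisely the issue behind Question \ref{dual near-ext=slice}.
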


\begin{proof}[Proof of Lemma \ref{liftedconst=}]
We analyze the mixed $L^p$ norms $L_n^pL_\xi^q(\Z_n^\kappa\times\R_\xi^d)$ and $L_\xi^qL_n^p(\Z_n^\kappa\times \R_\xi^d)$, given respectively by 
\[ \|g\|_{L_n^pL_\xi^q}=\left(\sum_n\left(\int|g(n,\xi)|^qd\xi\right)^{p/q}\right)^{1/p}\quad\text{and}\quad
\|g\|_{L_\xi^qL_n^p}=\left(\int\left(\sum_n|g(n,\xi)|^p\right)^{q/p}d\xi\right)^{1/q}. \]
There are corresponding norms for $L_\theta^sL_x^t(\T_\theta^\kappa\times\R_x^d)$ and $L_x^tL_\theta^s(\T_\theta^\kappa\times\R_x^d)$. Since $q\ge p$, we have by Minkowski's integral inequality that
\[ \|g\|_{L_\theta^qL_x^p(\T^\kappa\times\R^d)}\le \|g\|_{L_x^pL_\theta^q(\T^\kappa\times\R^d)}. \]
If $\f{F}$ denotes the Fourier transform from $\Z^\kappa$ to $\T^\kappa$ defined by 
\[ \f{F}g(\theta)=\sum_ng(n)e^{-2\pi in\cdot\theta},\]
then the optimal constant in the corresponding Hausdorff-Young inequality for $p\in(1,2)$ is 1. Thus if $|f|\le 1_E$ for $E\subset{\Z^\kappa}$ and $|E|<\infty$, we have
\begin{equation}
    \|\f{F}f\|_q\le \|f\|_p\le |E|^{1/p} \label{zhy}.
\end{equation}
This means that for $g\in L^q_\xi L^p_n(\Z_n^\kappa\times\R^d_\xi)$,
\begin{align*}
    \|\F g\|_{L_\xi^qL_\theta^q}&= \left(\int\int|\F g(\theta,\xi)|^qd\theta d\xi\right) ^{1/q} \\
    &\le \left(\int\left(\sum_n|g(n,\xi)|^p\right)^{q/p} d\xi\right) ^{1/q} ,
\end{align*}
so $\mc{F}$ is a contraction from $ L^q_\xi L^p_n(\R^d_\xi\times\Z_n^\kappa)$ to $L_\xi^qL_\theta^q(\R_\xi^d\times\T_{\theta}^\kappa)$. 

Let $|f|\le 1_E\in L^p(\Z^\kappa\times\R^d)$. For $n\in\Z^\kappa$, define the subset $E_n\subset\R^d$ and the function $f_n:\R^d\to\C$ by 
\begin{align} 
E_n=\{x\in\R^d:(n,x)\in E\}\label{defEn}\\
f_n(x)=f(n,x),\label{deffn}
\end{align} 
noting that $f_n\in L^p(\R^d)$. Since $|f_n|\le 1_{E_n}$, 
\begin{align} 
    \|\tilde{\F}f\|_{L^p_nL_\xi^q}&= \left(\sum_n\left(\int |\tilde{\F}f(n,\xi)|^qd\xi\right)^{p/q}\right)^{1/p}\nonumber \\ 
    & = \left(\sum_n\left(\int \left|\int f_n(x)e^{-2\pi i x\cdot\xi} dx\right|^qd\xi\right)^{p/q}\right)^{1/p}\nonumber \\
    &\le \left(\sum_n{\bf{B}}_{q,d}^p|E_n|\right)^{1/p}={\bf{B}}_{q,d}|E|^{1/p}.\label{eq:sliceineq}
\end{align}

Combining the above inequalities yields 
\begin{equation}\label{eq:3ineqs}
    \|\widehat{f}\|_{L^q(\Z^\kappa\times\R^d)}=\|\F\tilde{\F}f\|_{L_\xi^qL_\theta^q}\le \|\tilde{\F}f\|_{L^q_\xi L^p_n}\le \|\tilde{\F}f\|_{L_n^pL_\xi^q}\le {\bf{B}}_{q,d}|E|^{1/p}  ,
\end{equation} 
where we use (\ref{eq:sliceineq}) in the last inequality. Thus ${\bf{A}}(q,d,\kappa)\le {\bf{B}}_{q,d}$. Now let $|f|\le 1_E\in L^p(\R^d)$ be given. Define $E_0=\{0\}\times E$ and $f_0:\Z^\kappa\times\R^d \to \C$ by $f_0(n,x)=0$ for $n\not=0$ and $f_0(0,x)=f(x)$. Let $\tilde{\f{F}}$ denote the Fourier transform on $\R^d$ defined by $\tilde{\f{F}}g(\xi)=\int g(x)e^{-2\pi i x\cdot\xi} dx$. Then 
\begin{align*} 
\|\tilde{\f{F}}f\|_{L^q(\R^d)}&=\|\F{f_0}(0,\cdot)\|_{L^q_\xi} \\
&=\| \F{\tilde{f}}\|_{L^q_\xi L_n^q} \\
&\le {\bf{A}}(q,d,\kappa) |E_0|^{1/p}={\bf{A}}(q,d,\kappa)|E|^{1/p}.
\end{align*} 
This yields the reverse inequality ${\bf{A}}(q,d,\kappa)\ge {\bf{B}}_{q,d}$.

Now consider ${\bf{A}}'(q,d,\kappa)$. Let $f\in L^p(\Z^\kappa\times\R^d)$ and let $E\subset\Z^k\times\R^d$ be a Lebesgue measurable set satisfying $|E|\in\R^+$. Writing $E_\theta=\{\xi:(\theta,\xi)\in E\}$, 

\begin{align*}
\int_E|\widehat{f}|&= \int_{\T^\kappa}\int_{\R^d}|\mc{F}\tilde{\mc{F}}f(\theta,\xi)|1_E(\theta,\xi)d\xi d\theta \\
    &\le   \int_{\T^\kappa}\|\tilde{\mc{F}}\mc{F}f(\theta,\cdot)\|_{L^{q,*}_\xi}|E_{\theta}|^{1/p} d\theta \\
    &\le   \int_{\T^\kappa}{\bf{B}}_{q,d}\|{\mc{F}}f(\theta,\cdot)\|_{L^p_x}|E_{\theta}|^{1/p} d\theta \\
    &\le{\bf{B}}_{q,d} \left(   \int_{\T^\kappa}\|{\mc{F}}f(\theta,\cdot)\|^q_{L^p_x}d\theta \right)^{1/q}  \left(\int_{\T^\kappa}|E_{\theta}|^{p/p} d\theta\right)^{1/p}\\
    &\le{\bf{B}}_{q,d} \|{\mc{F}}f\|_{L^p_xL^q_\theta}|E|^{1/p} \\
    &\le{\bf{B}}_{q,d} \|f\|_{L^p}|E|^{1/p},
\end{align*}
so ${\bf{A}}'(q,d,\kappa)\le {\bf{B}}_{q,d}$. For the reverse inequality, let $f\in L^p(\R^d)$ and let $E\subset\R^d$ be a Lebesgue measurable set with $|E|\in\R^+$. Let $f_0$ and $E_0$ be defined as above. Then 
\begin{align*} 
\int_{E}|\tilde{\f{F}}f|&=\sum_n\int|\F{f_0}(n,\xi)|1_{E_0}(n,\xi)d\xi \\
&\le {\bf{A}}'(q,d,\kappa)|E_0|^{1/p}\|f_0\|_{L^p(\Z^\kappa\times\R^d)} \\
&= {\bf{A}}'(q,d,\kappa) |E|^{1/p}\|f\|_{L^p(\R^d)}. 
\end{align*} 
\end{proof}

In the remainder of the subsection, we prove the following two propositions concerning the structure of near-extremizers of the sharp Hausdorff-Young inequality on $\Z^\kappa\times\R^d$. 

\begin{proposition}\label{near-ext=slice} Let $d,\kappa\ge 1$ and $q\in(2,\infty)$, $p=q'$. Let $\delta>0$ be small. Let $0\not=f\in L^{p}(\Z^\kappa\times\R^d)$, $|f|\le 1_E$ where $E\subset\Z^\kappa\times\R^d$ is Lebesgue measurable and $|E|<\infty$. If $\|\widehat{f}\|_{q}\ge (1-\delta){\bf{B}}_{q,d} |E|^{1/p}$, then there exists $m\in\Z^\kappa$ such that 
\begin{equation}\label{eq:slice}
 |E_m|\ge (1-o_\delta(1))|E|
\end{equation}
where $E_m$ is defined in (\ref{defEn}).
\end{proposition}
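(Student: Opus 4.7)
The plan is to exploit the three-step chain
\[
\|\widehat{f}\|_{L^q} \;\le\; \|\tilde{\F}f\|_{L^q_\xi L^p_n} \;\le\; \|\tilde{\F}f\|_{L^p_n L^q_\xi} \;\le\; {\bf B}_{q,d}\,|E|^{1/p}
\]
from the proof of Lemma \ref{liftedconst=}, observe that the hypothesis forces each of the three individual inequalities to be $(1-O(\delta))$-tight, and read off structural information from each. Writing $b_n:=\|\tilde{\F}f_n\|_{L^q}$, tightness of the rightmost inequality (which is just the per-slice continuum Hausdorff--Young $b_n^p\le {\bf B}_{q,d}^p|E_n|$ summed in $n$) yields $\sum_n({\bf B}_{q,d}^p|E_n| - b_n^p) \le O(\delta){\bf B}_{q,d}^p|E|$; so once I establish that $b_n = o_\delta(1)\cdot{\bf B}_{q,d}|E|^{1/p}$ for every index other than some distinguished $m\in\Z^\kappa$, it follows from $b_n^p\le {\bf B}_{q,d}^p|E_n|$ and $\sum_n|E_n|=|E|$ that $\sum_{n\ne m}|E_n| = o_\delta(|E|)$, which is the desired conclusion.

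Next I would extract the missing concentration from the other two near-equalities. The middle inequality is equivalent to the triangle inequality $\|\sum_n|\tilde{\F}f_n|^p\|_{L^{q/p}} \le \sum_n\|\tilde{\F}f_n\|_{L^q}^p$ with exponent $q/p>1$; the target space is uniformly convex, and quantitative stability forces the nonnegative functions $|\tilde{\F}f_n|^p$ to be nearly proportional, i.e.\ $|\tilde{\F}f(n,\xi)|^p\approx c_n\,h(\xi)$ for some nonnegative sequence $(c_n)$ and nonnegative $h\in L^{q/p}_\xi$. The leftmost inequality comes from the pointwise-in-$\xi$ discrete Hausdorff--Young bound $\int_{\T^\kappa}|\F\tilde{\F}f(\theta,\xi)|^q d\theta \le (\sum_n|\tilde{\F}f(n,\xi)|^p)^{q/p}$, whose sharp constant is $1$ and whose (strict) extremizers on $\Z^\kappa$ for $p\in(1,2)$ are only the delta sequences. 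Stability for this inequality yields that for $\xi$ in a set of near-full weighted measure, the sequence $(\tilde{\F}f(n,\xi))_n$ is nearly concentrated at a single index $n(\xi)\in\Z^\kappa$.

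Combining these two statements is then immediate: if two distinct indices $n\ne n'$ had comparable $c_n,c_{n'}>0$, then for every $\xi$ with $h(\xi)\ne 0$ the vector $\tilde{\F}f(\cdot,\xi)$ would carry comparable magnitude at both $n$ and $n'$, contradicting the per-$\xi$ single-index concentration; hence $(c_n)$ itself must be concentrated at a single $m\in\Z^\kappa$, giving $b_n = o_\delta(1)\cdot{\bf B}_{q,d}|E|^{1/p}$ for every $n\ne m$, which closes the argument via the reduction in the first paragraph. The main obstacle is establishing the needed quantitative stability for the discrete Hausdorff--Young inequality on $\Z^\kappa$: one must translate $\|\F a\|_{L^q(\T^\kappa)} \ge (1-\epsilon)\|a\|_{\ell^p}$ into an actual $\ell^p$-concentration bound on $a$. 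A natural route is to juxtapose the chain $\|\F a\|_q^q \le \|\F a\|_\infty^{q-2}\|\F a\|_2^2 \le \|a\|_{\ell^1}^{q-2}\|a\|_{\ell^2}^2$ with the log-convexity $\|a\|_{\ell^p}^q \le \|a\|_{\ell^1}^{q-2}\|a\|_{\ell^2}^2$, whose tightness constrains $|a|$ to be nearly a constant multiple of an indicator $1_S$, together with an explicit uniform-gap computation ruling out $|S|\ge 2$ (for instance, the two-atom uniform case gives $\|\F a\|_q/\|a\|_{\ell^p} = 2^{1/q}\|\cos(\pi\cdot)\|_{L^q([0,1])}<1$).
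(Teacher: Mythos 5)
Your first half coincides with the paper's: the same chain of three mixed-norm inequalities (Lemma \ref{mixedineqs}), each forced to be $(1-O(\delta))$-tight, and the same use of stability for the discrete Hausdorff--Young inequality on $\Z^\kappa$ applied pointwise in $\xi$ on a set of near-full weighted measure to produce a single index $n(\xi)$ (the paper simply cites \cite{fournier} and \cite{cc} for that stability; your sketch via near-equality in the $\ell^1$--$\ell^2$ interpolation plus a uniform gap is only verified for two atoms, so as written it is incomplete, though citing the known result is legitimate). The genuine trouble is in how you globalize. Your opening reduction is logically insufficient: knowing $b_n=o_\delta(1)\,{\bf B}_{q,d}|E|^{1/p}$ \emph{individually} for every $n\ne m$, together with $b_n^p\le{\bf B}_{q,d}^p|E_n|$ and $\sum_n({\bf B}_{q,d}^p|E_n|-b_n^p)\le O(\delta){\bf B}_{q,d}^p|E|$, does not yield $\sum_{n\ne m}|E_n|=o_\delta(1)|E|$; summing the slack only gives $\sum_{n\ne m}|E_n|\le {\bf B}_{q,d}^{-p}\sum_{n\ne m}b_n^p+O(\delta)|E|$, and smallness of each $b_n$ says nothing about the sum (mass spread essentially evenly over $N\gg\delta^{-1}$ slices makes every $b_n$ tiny while no slice is dominant). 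What is actually needed, and what the paper proves, is the summed concentration $b_m^p\ge(1-o_\delta(1)){\bf B}_{q,d}^p|E|$.

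The mechanism you propose to produce the distinguished $m$ also has a gap: quantitative stability of the triangle inequality in $L^{q/p}$ for the \emph{infinite} family $(|\tilde{\F}f_n|^p)_n$ does not give $|\tilde{\F}f(n,\xi)|^p\approx c_n h(\xi)$ for every $n$ — slices carrying a negligible fraction of the mass are unconstrained, and the proportionality one can extract is only averaged/weighted — and ruling out two indices with comparable $c_n$ does not quantitatively force $\ell^1$-concentration of $(c_n)$ at a single index. The paper sidesteps both issues: after extracting the one-index-per-$\xi$ piece $g$ (with the small remainder $h$ controlled in $L^q_\xi L^p_n$, Lemma \ref{weakerslice}), it uses the identity $\|g\|_{L^q_\xi L^p_n}=\|g\|_{L^q_\xi L^q_n}$, and comparison with $\|g\|_{L^p_n L^q_\xi}$ through the near-equality (\ref{ineq1}) forces, via the elementary computation with $M=\sup_n\|g(n,\cdot)\|_{L^q_\xi}^q$, that one slice carries $(1-o_\delta(1))$ of $\|g\|_{L^q}^q$; the per-slice bound $\int|g(m,\xi)|^q d\xi\le{\bf B}_{q,d}^q|E_m|^{q/p}$ then gives (\ref{eq:slice}). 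I would replace your Minkowski-stability/pairwise-comparability step by an argument of this type (or otherwise supply the summed, not merely individual, smallness of the off-$m$ slices).
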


The analogous proof of Proposition \ref{near-ext=slice} for the dual inequality fails to go through, which is the reason our results are partial. This leads to the following question, which is left open. Our final precompactness result is conditional on a positive answer to this question. 
\begin{question} \label{dual near-ext=slice} Let $d,\kappa\ge 1$ and $q\in(2,\infty)$, $p=q'$. Let $\delta>0$ be small. Let $0\not=f\in L^{p}(\Z^\kappa\times\R^d)$. If $\|\widehat{f}\|_{q,*}\ge (1-\delta){\bf{B}}_{q,d} \|f\|_p$, then must there exist $m\in\Z^\kappa$ such that 
\begin{equation}\label{eq:dualslice}
\|f_m\|_{L^p(\R^d)}\ge (1-o_\delta(1))\|f\|_{L^p(\Z^\kappa\times\R^d)},
\end{equation}
where $f_m$ is defined in (\ref{deffn})?
\end{question}

In the analysis of ${\bf{A}}(q,d,\kappa)$ from Lemma \ref{liftedconst=}, we proved a string of inequalities in (\ref{eq:3ineqs}). Combining these inequalities with the assumption that $(f,E)$ are $\delta$-near extremizing yields the following lemma, which requires no further proof. 

\begin{lemma}\label{mixedineqs} Let $d,\kappa\ge 1$ and $q\in(2,\infty)$. Set $p=q'$. Let $\delta>0$, let $E\subset{\Z^\kappa\times\R^d}$ be a Lebesgue measurable set with $|E|\in\R^+$, and let $f$ be a measurable function satisfying $|f|\le1_E$. If $\|\widehat{f}\|_{q}\ge(1-\delta){\bf{B}}_{q,d}|E|^{1/p}$, then all of the following hold: 
\begin{align}
    \label{ineq1} \|\F\tilde{\F}f\|_{L^q_\xi L^q_\theta}&\ge (1-\delta)\|\tilde{\F}f\|_{L^p_n L^q_\xi}\\
    \label{ineq2} \|\tilde{\F}f\|_{L^q_\xi L^p_n}&\ge(1-\delta)\|\tilde{\F}f\|_{L_n^p L_\xi^q} \\
    \label{ineq3} \|\tilde{\F}f\|_{L_n^p L_\xi^q}&\ge (1-\delta){\bf{B}}_{q,d}|E|^{1/p}
\end{align}
\end{lemma}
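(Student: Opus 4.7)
The plan is to observe that this lemma is a direct consequence of the inequality chain (\ref{eq:3ineqs}) already established in the proof of Lemma \ref{liftedconst=}, combined with the near-extremizing hypothesis. No new estimate is needed; the content of the lemma is simply that when the two outermost terms of a sorted chain are within a factor $(1-\delta)$ of each other, every intermediate pair is also within that factor.

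Concretely, I would introduce the shorthand
\[ A = \|\F\tilde{\F}f\|_{L^q_\xi L^q_\theta}, \quad B = \|\tilde{\F}f\|_{L^q_\xi L^p_n}, \quad C = \|\tilde{\F}f\|_{L^p_n L^q_\xi}, \quad D = {\bf{B}}_{q,d}|E|^{1/p}, \]
so that the chain (\ref{eq:3ineqs}) reads $A \le B \le C \le D$, while the hypothesis $\|\widehat{f}\|_q \ge (1-\delta){\bf{B}}_{q,d}|E|^{1/p}$ translates to $A \ge (1-\delta) D$.

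From here, each of the three conclusions follows by a one-line sandwich argument. For (\ref{ineq3}), namely $C \ge (1-\delta) D$, I would use $C \ge A \ge (1-\delta) D$. For (\ref{ineq2}), namely $B \ge (1-\delta) C$, I would use $B \ge A \ge (1-\delta) D \ge (1-\delta) C$ since $C \le D$. For (\ref{ineq1}), namely $A \ge (1-\delta) C$, I would again invoke $A \ge (1-\delta) D \ge (1-\delta) C$.

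The main obstacle, if one wants to call it that, is purely notational: one must keep straight which mixed-norm corresponds to which position in the chain from the proof of Lemma \ref{liftedconst=}, in particular that the bound $\|\tilde{\F}f\|_{L^p_n L^q_\xi} \le {\bf{B}}_{q,d}|E|^{1/p}$ from (\ref{eq:sliceineq}) sits at the top of the chain, while Minkowski's integral inequality for $q \ge p$ places $B \le C$, and $A \le B$ comes from $\F$ being a contraction from $L^q_\xi L^p_n$ to $L^q_\xi L^q_\theta$. Once this bookkeeping is in place the three assertions are immediate, which is why the lemma requires no further proof.
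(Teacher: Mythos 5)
Your proposal is correct and is exactly the argument the paper intends: it states that the lemma follows from the chain (\ref{eq:3ineqs}) combined with the near-extremizing hypothesis and "requires no further proof," which is precisely your sandwich argument $A\le B\le C\le D$ with $A\ge(1-\delta)D$.
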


 The inequalities listed in Lemma \ref{mixedineqs} will be used to establish the following weak result, which is a preliminary for showing that any near extremizer of the lifted problem is mostly supported on one slice of the $\Z^\kappa$ variable. 

\begin{lemma}\label{weakerslice} Let $E\subset\Z^\kappa\times\R^d$ and $|f|\le 1_E$ satisfy $\|\widehat{f}\|_q\ge(1-\delta){\bf{B}}_{q,d}|E|^{1/p}$. There exists a disjointly supported decomposition 
\[ \tilde{\F}f(n,\xi)=g(n,\xi)+h(n,\xi) \]
where 
\[ \|h\|_{L^q_\xi L_n^p}\le o_\delta(1)|E|^{1/p}\]
and for each $\xi\in\R^d$ there exists $n(\xi)\in\Z^\kappa$ such that
\[ g(n,\xi)=0\quad \text{for all }n\not=n(\xi).\]
\end{lemma}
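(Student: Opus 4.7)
The plan is to leverage the chain of near-equalities in Lemma \ref{mixedineqs} to establish pointwise near-saturation (in $\xi$) of the slicewise discrete Hausdorff--Young inequality on $\Z^\kappa$, then exploit the concentration principle for near-extremizers of that inequality to select, for almost every $\xi$, a dominant integer coordinate $n(\xi)$.

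From (\ref{ineq1}) combined with Minkowski's integral inequality and the slicewise $\ell^p\to L^q$ Hausdorff--Young inequality
\[ \|\F\tilde{\F}f(\cdot,\xi)\|_{L^q(\T^\kappa)}\le \|\tilde{\F}f(\cdot,\xi)\|_{\ell^p(\Z^\kappa)}, \]
I would deduce
\[ \int_{\R^d}\Bigl( \|\tilde{\F}f(\cdot,\xi)\|_{\ell^p}^q-\|\F\tilde{\F}f(\cdot,\xi)\|_{L^q}^q \Bigr)\,d\xi \le C\delta\,\|\tilde{\F}f\|_{L^q_\xi L^p_n}^q. \]
Writing $a(\xi)=\|\F\tilde{\F}f(\cdot,\xi)\|_{L^q}$ and $b(\xi)=\|\tilde{\F}f(\cdot,\xi)\|_{\ell^p}$, the pointwise bound $a\le b$ and Chebyshev applied to the defect $b^q-a^q$ show that the bad set $S_\eta^c:=\{\xi:a(\xi)<(1-\eta)b(\xi)\}$ satisfies
\[ \int_{S_\eta^c}b(\xi)^q\,d\xi\le C(\delta/\eta)\,\|\tilde{\F}f\|_{L^q_\xi L^p_n}^q. \]

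The heart of the argument is the following structural principle for the discrete Fourier transform on $\Z^\kappa$: there exists $\tau\colon(0,1)\to(0,1)$ with $\tau(\eta)\to 0$ as $\eta\to 0$ such that whenever $u\in\ell^p(\Z^\kappa)$ satisfies $\|\F u\|_{L^q(\T^\kappa)}\ge(1-\eta)\|u\|_{\ell^p}$, there exists $n^\ast\in\Z^\kappa$ with $|u(n^\ast)|^p\ge(1-\tau(\eta))\|u\|_{\ell^p}^p$. This is proved by compactness: a putative counterexample sequence $(u_k)$ can be normalized by translation, and then a weak-$\ell^2$ subsequential limit $u$ would be an extremizer of the discrete Hausdorff--Young inequality that is not (a modulated translate of) a single point mass, contradicting the classification of extremizers of $\F\colon\ell^p(\Z^\kappa)\to L^q(\T^\kappa)$.

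Given this principle, for each $\xi$ let $n(\xi)\in\Z^\kappa$ be the lexicographically first maximizer of $m\mapsto|\tilde{\F}f(m,\xi)|$ (so $n(\xi)$ is measurable), set $g(n,\xi)=\tilde{\F}f(n,\xi)\,\mathbf{1}_{n=n(\xi)}$, and $h=\tilde{\F}f-g$. For $\xi\in S_\eta$ the concentration principle yields $\sum_{n\ne n(\xi)}|\tilde{\F}f(n,\xi)|^p\le\tau(\eta)b(\xi)^p$; for $\xi\in S_\eta^c$ the trivial bound $\sum_{n\ne n(\xi)}|\tilde{\F}f(n,\xi)|^p\le b(\xi)^p$ applies. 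Summing the two regions and using $\|\tilde{\F}f\|_{L^q_\xi L^p_n}\le\|\tilde{\F}f\|_{L^p_n L^q_\xi}\le {\bf{B}}_{q,d}|E|^{1/p}$ from (\ref{eq:sliceineq}) yields
\[ \|h\|_{L^q_\xi L^p_n}\le\bigl(\tau(\eta)^{q/p}+C\delta/\eta\bigr)^{1/q}{\bf{B}}_{q,d}|E|^{1/p}. \]
Taking $\eta=\sqrt{\delta}$ makes both error terms $o_\delta(1)$, completing the proof. The main obstacle is the structural concentration principle; while expected in the Christ--Burchard framework, ruling out multi-peaked near-extremizers of the discrete Hausdorff--Young inequality demands a careful compactness argument. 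The remainder is bookkeeping with Minkowski and Chebyshev.
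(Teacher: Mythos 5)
Your outer argument coincides with the paper's proof of Lemma \ref{weakerslice}: the same chain of inequalities from Lemma \ref{mixedineqs}, the same good/bad dichotomy in $\xi$ (your set $S_\eta$ is the paper's $\mc{G}$, with the same choice $\eta\sim\delta^{1/2}$), the same Chebyshev-type bound on $\int_{S_\eta^c}b(\xi)^q\,d\xi$, and the same definition of $g$ and $h$ followed by the same bookkeeping (your measurable choice of $n(\xi)$ as the first maximizer is fine). The only substantive difference is how you obtain the single-slice concentration statement, namely that $\|\F u\|_{L^q(\T^\kappa)}\ge(1-\eta)\|u\|_{\ell^p}$ forces all but a $\tau(\eta)$-fraction of the $\ell^p$ mass of $u$ onto a single point of $\Z^\kappa$. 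The paper does not reprove this; it invokes the quantitative results in the literature (the argument beginning at line (7) of \cite{fournier}, or Theorem 1.3 of \cite{cc}).

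Your proposed compactness proof of that principle has a genuine gap. Given a counterexample sequence $u_k$ with $\|u_k\|_{\ell^p}=1$, $\|\F u_k\|_{L^q}\to 1$ and $\max_n|u_k(n)|^p\le 1-\epsilon_0$, translating and passing to a weak-$\ell^2$ subsequential limit $u$ does not produce a contradiction: weak convergence gives only coordinatewise convergence and lower semicontinuity of the norms, so mass may vanish (spread out) or split into pieces whose mutual distances tend to infinity. The limit may be $0$, or a modulated point mass of mass strictly less than one (each coordinate limit satisfies $|u(n)|^p\le 1-\epsilon_0$, yet a small point mass is still an extremizer of the ratio), and in neither case do you contradict the classification of extremizers; moreover you cannot even assert that $u$ attains equality, since neither $\|\F u_k\|_{L^q}$ nor $\|u_k\|_{\ell^p}$ passes to the limit under weak convergence. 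Ruling out vanishing and dichotomy is exactly the content of the concentration statement you are trying to establish, so the sketch as given is circular; one needs either a genuine concentration-compactness/superadditivity argument or, as the paper does, the quantitative theorems of Fournier and Charalambides--Christ. With that input supplied by the citation, the remainder of your proof is correct and is essentially identical to the paper's.
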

\begin{proof}[Proof of Lemma \ref{weakerslice}] This is completely analogous to the proof of Lemma 10.14 in \cite{c2}. 

Let $\eta=\delta^{1/2}$. Since $|f|\le 1_E$, for each $n\in\Z^\kappa$ the function $\tilde{\F}f(n,\xi)$ is a continuous function of $\xi$. Thus $\p_\xi(n):=\tilde{\F}f(n,\xi)$ is well-defined for every $\xi\in\R^d$. Define 
\[ \mc{G}=\{\xi\in\R^d:\p_\xi\not=0,\quad \|\widehat{\p_\xi}\|_{ L^q_\theta}\ge (1-\eta)\|\p_\xi\|_{ L^p_n}\}.\]
Here, $\widehat{\cdot}$ denotes the Fourier transform for $\Z^\kappa$. Then 
\begin{align*}
    \|\F\tilde{\F}f\|^q_{L^q_\theta L^q_\xi}&= \int_{\R^d\setminus\mc{G}}\|\widehat{\p_{\xi}}\|^q_{L^q_\theta}d\xi+\int_{\mc{G}}\|\widehat{\p_{\xi}}\|^q_{L^q_\theta}d\xi \\
    &\le (1-\eta)^q\int_{\R^d\setminus\mc{G}}\|\p_{\xi}\|^q_{L^p_n}d\xi+\int_{\mc{G}}\|\p_{\xi}\|^q_{L^p_n}d\xi \\
    &\le \int_{\R^d}\|\tilde{\F}f\|^q_{L^p_n}d\xi-c\eta\int_{\R^d\setminus\mc{G}}\|\tilde{\F}f\|^q_{L^p_n}d\xi.
\end{align*}
Combining this with (\ref{ineq1}), we get
\begin{align*}
    (1-\delta)^q\|\tilde{\F}f\|_{L^p_n L^q_\xi}^q&\le\|\F\tilde{\F}f\|_{L^q_\xi L^q_\theta}^q \\
    &\le \int_{\R^d}\|\tilde{\F}f\|^q_{L^p_n}d\xi-c\eta\int_{\R^d\setminus\mc{G}}\|\tilde{\F}f\|^q_{L^p_n}d\xi. 
\end{align*}
Rearranging the above inequality, obtain 
\begin{equation}\label{offGbound}
\int_{\R^d\setminus\mc{G}}\|\tilde{\F}f\|^q_{L^p_n}d\xi \le c'\delta^{1/2} \|\tilde{\F}f\|_{L^p_n L^q_\xi}^q.
\end{equation} 
For each $\xi\in\mc{G}$, $\|\widehat{\p_\xi}\|_{L^q_\theta}\ge(1-\eta)\|\p_\xi\|_{L^p_n}$, so we can invoke the argument beginning in line (7) of \cite{fournier} or Theorem 1.3 from \cite{cc} to get $n=n(\xi)\in\Z^\kappa$ such that 
\[ \|\p_\xi\|_{L^p(\Z^k\setminus\{n(\xi)\})}\le o_{\eta}(1)\|\p_\xi\|_{L^p(\Z^\kappa)}. \]
Define 
\[ g(n,\xi)=\begin{cases} \quad \p_\xi(n)\quad&\text{if}\quad n=n(\xi),\,\xi\in\mc{G}\\ \quad 0\quad&\text{else.}\quad 
\end{cases}\]
Let $h(n,\xi):=\tilde{\F}f(n,\xi)-g(n,\xi)$. Note that $g$ satisfies the conclusions of the lemma by its definition. To bound $\|h\|_{L^q_\xi L^p_n}$, we use the definition of $g$ as well as (\ref{offGbound}) to get 
\begin{align*}
    \|h\|_{L^q_\xi L^p_n}^q&\le \int_{\mc{G}}\|\tilde{\mc{\F}}f-g\|_{L^p_n}^qd\xi+ \int_{\R^d\setminus\mc{G}}\|\tilde{\mc{\F}}f\|_{L^p_n}^qd\xi\\
    &= \int_{\mc{G}} \left(\|\tilde{\F}f\|_{L^p(\Z^\kappa\setminus{n(\xi)})}^p+|\tilde{\F}f(n(\xi),\xi)-g(n(\xi),\xi)|^p\right)^{q/p}d\xi +\int_{\R^d\setminus\mc{G}}\|\tilde{\F}f\|^q_{L^p_n}d\xi  \\
    &= \int_{\mc{G}} \left(\|\tilde{\F}f\|_{L^p(\Z^\kappa\setminus{n(\xi)})}^p+0\right)^{q/p}d\xi +\int_{\R^d\setminus\mc{G}}\|\tilde{\F}f\|^q_{L^p_n}d\xi  \\
    &\le \int_{\mc{G}} \left(o_\eta(1)\|\tilde{\F}f\|_{L^p(\Z^\kappa)}\right)^qd\xi +c'\delta^{1/2}\|\tilde{\F}f\|^q_{L^q_\xi L^p_n} = o_\delta(1)\|\tilde{\F}f\|^q_{L^q_\xi L^p_n}.
\end{align*}
\end{proof}

\begin{proof}[Proof of Proposition \ref{near-ext=slice}] 
Let $\tilde{\F}f=g+h$ as in Lemma \ref{weakerslice}. Combining $\|h\|_{L^q_\xi L^p_n}\le o_\delta(1)(1)|E|^{1/p}$ with (\ref{ineq3}) implies $\|h\|_{L^q_\xi L^p_n}\le o_\delta(1)\|\tilde{\F}f\|_{L^p_nL^q_\xi}$. Using this with (\ref{ineq1}) gives 
\begin{align}
    \|g\|_{L^q_\xi L^p_n}+\|h\|_{L^q_\xi L^p_n}\ge \|\F\tilde{\F}f\|_{L^q_\xi L^q_\theta}&\ge (1-\delta)\|\tilde{\F}f\|_{L^p_n L^q_\xi}\nonumber,
\end{align}
from which we conclude 
\begin{align}
    \|g\|_{L^q_\xi L^p_n}&\ge (1-o_\delta(1))\|\tilde{\F}f\|_{L^p_n L^q_\xi} \nonumber.
\end{align}
Noting that $\|g\|^q_{L^q_\xi L^p_n}=\|g\|^q_{L^q_\xi L^q_n}$, we further have 
\begin{align}  \|g\|_{L^q_\xi L^q_n}=\|g\|_{L^q_\xi L^p_n}\ge (1-o_\delta(1))\|\tilde{\F}f\|_{L^p_n L^q_\xi}\ge (1-o_\delta(1))\|g\|_{L^p_n L^q_\xi} .\label{qpgepq} \end{align}
Let $M=\sup_{n}\|g(n,\cdot)\|^q_{L^q_\xi}$ (which is finite by (\ref{qpgepq})) and calculate using (\ref{qpgepq})
\begin{align*}
    M^{\frac{q-p}{pq}}\left(\int|g(n(\xi),\xi)|^{q}d\xi\right)^{1/q}&\ge (1-o_\delta(1))M^{\frac{q-p}{pq}}\left(\sum_n\left(\int|g(n,\xi)|^qd\xi\right)^{p/q}\right)^{1/p}\\
    &\ge (1-o_\delta(1))\left(\sum_n\int|g(n,\xi)|^qd\xi\right)^{1/p} \\
    &= (1-o_\delta(1))\left(\int|g(n(\xi),\xi)|^qd\xi\right)^{1/p}
\end{align*}
and therefore 
\[ M\ge (1-o_\delta(1))^{\frac{pq}{q-p}}\left(\int|g(n(\xi),\xi)|^qd\xi\right)^{\left(\frac{1}{p}-\frac{1}{q}\right)\left(\frac{pq}{q-p}\right)} =(1-o_\delta(1))\int|g(n(\xi),\xi)|^qd\xi .\]
Thus there exists $n\in\Z^\kappa$ such that 
\[\int |g(n,\xi)|^qd\xi\ge (1-o_\delta(1))(\|\tilde{\F}f\|_{L^q_\xi L^p_n}-\|h\|_{L^q_\xi L^p_n})^{q}\ge (1-o_\delta(1)){\bf{B}}_{q,d}^q|E|^{q/p}.\]
Then 
\[{\bf{B}}_{q,d}^q |E_n|^{q/p}\ge \int |g(n,\xi)|^qd\xi\ge  (1-o_\delta(1)){\bf{B}}_{q,d}^q|E|^{q/p},\]
so $|E_n|^{1/p}\ge(1-o_\delta(1))|E|^{1/p}$.
\end{proof}

\end{subsection}

\begin{subsection}{Lifting to $\Z^\kappa\times\R^d$}

\begin{definition} Let $\mc{Q}_d=[-\frac{1}{2},\frac{1}{2}]^d$. To any function $f:\R^d\to \C$, associate the function $f^\dagger:\Z^d\times\R^d\to\C$ defined by 
\[ f^\dagger(n,x)=\begin{cases}
f(n+x)\,\,&\text{if }x\in\mc{Q}_d\\
0 &\text{if }x\not\in\mc{Q}_d. \end{cases} \]
For a measurable set $E\subset\R^d$, let $E^\dagger$ be the set in $\Z^d\times\R^d$ defined by 
\[   E^\dagger=\{(n,x):n+x\in E\}  . \]
\end{definition}

We abuse the notation of $\widehat{\cdot}$ in the following lemmas: if $g:\R^d\to\C$, then $\widehat{g}(\xi)=\int_{\R^d}e^{-2\pi ix\cdot\xi}g(x)dx$ and if $g:\Z^d\times\R^d\to\C$, then $\widehat{g}(\theta,\xi)=\sum\limits_{n\in\Z^d}\int_{\R^d}e^{-2\pi in\cdot\theta}e^{-2\pi ix\cdot\xi}g(n,x)dx$. 
\begin{lemma}\label{lifttonearexts} Let $d\ge 1$ and $q\in(2,\infty)$, $p=q'$. Let $\delta,\eta>0$ be small. Let $E\subset\R^d$ be a Lebesgue measurable set with $|E|\in\R^+$. Suppose that
\[ \text{distance}(x,\Z^d)\le \eta\quad\text{for all }x\in E\]
and that for $|f|\le 1_E$, 
\[ \|\widehat{f}\|_{L^q(\R^d)}\ge (1-\delta){\bf{B}}_{q,d}|E|^{1/p}. \]
Then 
\[ \|\widehat{f^\dagger}\|_{L^q(\mb{T}^d\times \R^d)}\ge (1-\delta-o_\eta(1)){\bf{B}}_{q,d}|E^\dagger|^{1/p} .\]
\end{lemma}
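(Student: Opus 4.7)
The plan is to compare $\|\widehat{f^\dagger}\|_{L^q(\T^d\times\R^d)}$ directly with $\|\widehat{f}\|_{L^q(\R^d)}$ via a Riemann-sum approximation, exploiting the hypothesis that $E$ sits within $\eta$ of $\Z^d$ to force the slices of $f^\dagger$ to be supported in a very small box.

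Setting $h_n(x) := f(n+x) 1_{\mc{Q}_d}(x)$, the lifted function is $f^\dagger(n,x) = h_n(x)$, and the hypothesis on $E$ forces each $h_n$ to be supported in $B_\eta := [-\eta,\eta]^d \subset \mc{Q}_d$. The tiling $\R^d = \bigsqcup_n (n+\mc{Q}_d)$ yields $|E^\dagger| = |E|$. A direct computation produces the key identities
\[
\widehat{f^\dagger}(\theta,\xi) = \sum_{n\in\Z^d} e^{-2\pi i n\cdot\theta}\widehat{h_n}(\xi) =: \widehat{g_\theta}(\xi), \qquad \widehat{f}(\xi) = \widehat{f^\dagger}(\pi(\xi),\xi),
\]
where $g_\theta := \sum_n e^{-2\pi i n\cdot\theta} h_n$ is supported in $B_\eta$ and $\pi:\R^d\to\T^d$ is the quotient map. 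Parametrizing $\R^d = \bigsqcup_\ell (\ell + [-\tfrac{1}{2},\tfrac{1}{2})^d)$ and applying Fubini,
\[
\|\widehat{f}\|_{L^q(\R^d)}^q = \int_{\T^d}\sum_{\ell\in\Z^d}|\widehat{g_\theta}(\ell+\theta)|^q\,d\theta, \qquad \|\widehat{f^\dagger}\|_{L^q(\T^d\times\R^d)}^q = \int_{\T^d}\int_{\R^d}|\widehat{g_\theta}(\xi)|^q\,d\xi\,d\theta.
\]

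The heart of the argument is to establish, uniformly in $\theta$, that
\[
\sum_{\ell\in\Z^d}|\widehat{g_\theta}(\ell+\theta)|^q \le (1+o_\eta(1))\int_{\R^d}|\widehat{g_\theta}(\xi)|^q\,d\xi.
\]
The left side is a Riemann sum at spacing $1$ for the right side, and since $g_\theta$ is supported in the small box $B_\eta$, $\widehat{g_\theta}$ is entire of exponential type $2\pi\eta$, i.e.\ slowly varying at scale $1/\eta \gg 1$. For $q = 2$ this collapses to an exact identity via Plancherel on $\T^d$: the function $G_\theta(x) := g_\theta(x)e^{-2\pi i x\cdot\theta}$ is supported in $B_\eta\subset\mc{Q}_d$ and embeds into $\T^d$ without wrap-around, so
\[
\sum_\ell |\widehat{g_\theta}(\ell+\theta)|^2 = \sum_\ell |\widehat{G_\theta}(\ell)|^2 = \|G_\theta\|_{L^2(\T^d)}^2 = \|g_\theta\|_{L^2(\R^d)}^2 = \|\widehat{g_\theta}\|_{L^2(\R^d)}^2.
\]
Once the general-$q$ version is in hand, integrating over $\theta$ gives $\|\widehat{f}\|_q^q \le (1+o_\eta(1))\|\widehat{f^\dagger}\|_q^q$; combined with $|E^\dagger| = |E|$ and the hypothesis on $f$, this yields $\|\widehat{f^\dagger}\|_q \ge (1-\delta-o_\eta(1)){\bf{B}}_{q,d}|E^\dagger|^{1/p}$.

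The main obstacle is extending the $q = 2$ identity to general $q > 2$, where $|\widehat{g_\theta}|^q$ is no longer bandlimited and Poisson summation fails to give an exact identity. The plan is to rescale $\widetilde g_\theta(x) := g_\theta(\eta x)$, which is supported in $[-1,1]^d$, and recast the problem as a Riemann-sum approximation at spacing $\eta$ for the bandlimited function $\widehat{\widetilde g_\theta}$; a quantitative Plancherel--P\'olya estimate, combined with a Bernstein gradient bound $\|\nabla\widehat{g_\theta}\|_q \lesssim \eta\|\widehat{g_\theta}\|_q$, then controls the discrepancy between sum and integral. Care is required to handle the polynomial (rather than compactly supported) decay of $\widehat{g_\theta}$ at infinity, either by splitting into a main region and a controlled tail, or by applying Poisson summation to a suitable bandlimited approximation of $|\widehat{g_\theta}|^q$.
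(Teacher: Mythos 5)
Your route is genuinely different from the paper's. The paper disposes of this lemma in two lines: it quotes Lemma 9.1 of \cite{c1}, whose conclusion is the two-sided estimate $\bigl|\,\|\widehat{f^\dagger}\|_{L^q(\T^d\times\R^d)}-\|\widehat f\|_{L^q(\R^d)}\bigr|\le C\eta^\gamma\|f\|_{L^p(\R^d)}$, and then uses $\|f\|_p\le|E|^{1/p}$ together with $|E^\dagger|=|E|$. You instead aim for a self-contained proof via the slicing identities: your computations $\widehat{f^\dagger}(\theta,\xi)=\widehat{g_\theta}(\xi)$, $\widehat f(\ell+\theta)=\widehat{g_\theta}(\ell+\theta)$, $\|\widehat f\|_q^q=\int_{\T^d}\sum_\ell|\widehat{g_\theta}(\ell+\theta)|^q\,d\theta$ and $\|\widehat{f^\dagger}\|_q^q=\int_{\T^d}\|\widehat{g_\theta}\|_{L^q(\R^d)}^q\,d\theta$ are correct, the hypothesis on $E$ does force $\operatorname{supp}g_\theta\subset[-\eta,\eta]^d$, and the inequality you need, $\sum_\ell|\widehat{g_\theta}(\ell+\theta)|^q\le(1+o_\eta(1))\|\widehat{g_\theta}\|_{L^q(\R^d)}^q$, is in the right direction and is true, so the strategy does yield the lemma (with an explicit $O_{d,q}(\eta)$ loss). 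What each approach buys: the paper's is a one-line reduction to an existing quantitative result; yours is elementary and independent of \cite{c1}, but gives only the one-sided comparison $\|\widehat f\|_q^q\le(1+O(\eta))\|\widehat{f^\dagger}\|_q^q$ --- which is all this lemma needs, though the two-sided Lemma 9.1 is what the paper leans on again for the dual version, Lemma \ref{lifttonearextsdual}.

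That said, your central estimate is left as a plan, so let me point out how it closes and where your sketch slightly misjudges the difficulty. First, replace ``uniformly in $\theta$'' by ``for a.e.\ $\theta$'': $g_\theta\in L^1$ is compactly supported in $[-\eta,\eta]^d$, so $\widehat{g_\theta}$ is entire of exponential type $2\pi\eta$ in each variable, and $\widehat{g_\theta}\in L^q(\R^d)$ for a.e.\ $\theta$ by Fubini from $\|\widehat{f^\dagger}\|_{L^q(\T^d\times\R^d)}\le\|f^\dagger\|_p<\infty$; a.e.\ is enough after integrating in $\theta$. Second, the ``polynomial decay at infinity'' worry is moot and no rescaling or band-limited surrogate for $|\widehat{g_\theta}|^q$ is required: in one variable, for $F\in L^q(\R)$ of exponential type $\sigma=2\pi\eta$, average the bound $|F(\ell+\theta_1)|^q\le|F(s)|^q+\int_{I_\ell}q|F|^{q-1}|F'|$ over $s$ in the unit cell $I_\ell$ centered at $\ell+\theta_1$, sum in $\ell$, and apply H\"older plus Bernstein's inequality $\|F'\|_q\le\sigma\|F\|_q$ to get $\sum_\ell|F(\ell+\theta_1)|^q\le(1+q\sigma)\|F\|_q^q$; the comparison is purely local, so the tails never enter. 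Iterating coordinatewise (apply the one-dimensional bound with the remaining variables frozen a.e.\ and integrate; each sampled slice $\widehat{g_\theta}(\ell_1+\theta_1,\cdot)$ is again the transform of an $L^1$ function supported in $[-\eta,\eta]^{d-1}$, and lies in $L^q$ by the previous step) gives the constant $(1+2\pi q\eta)^d$. One caution on Bernstein: Hausdorff--Young only yields $\|\partial_j\widehat{g_\theta}\|_q\le 2\pi\eta\|g_\theta\|_p$, which cannot be converted into $\eta\|\widehat{g_\theta}\|_q$; you genuinely need the classical $L^q$ Bernstein inequality for entire functions of exponential type (see \cite{boas}), which is what your sketch implicitly invokes.
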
 
\begin{proof} The conclusion of Lemma 9.1 of \cite{c1} is that for some $C,\g\in\R^+$, we have 
\[  \left|\|\widehat{f^\dagger}\|_{L^q(\mb{T}^d\times\R^d)}-\|\widehat{f}\|_{L^q(\R^d)}\right|\le C\eta^\g\|f\|_{L^p(\R^d)} . \]
It follows that 
\begin{align*}  
\|\widehat{f}\|_{L^q(\mb{T}^d\times\R^d)}&\ge (1-\delta){\bf{B}}_{q,d}|E|^{1/p}-C\eta^\g\|f\|_p\ge(1-\delta){\bf{B}}_{q,d}|E|^{1/p}-C\eta^\g|E|^{1/p}\\
&= (1-\delta-o_\eta(1)){\bf{B}}_{q,d}|E^\dagger|^{1/p},
\end{align*}
where we used that $|E|=|E^\dagger|$.  
\end{proof}

The following lemma is analogous to the previous lemma. Ultimately, it is necessary to establish analogous results for the norm $\|\cdot\|_{q,*}$ because we will use it to translate localization properties of near-extremizers to the Fourier transforms of near-extremizers.

\begin{lemma} \label{lifttonearextsdual}Let $d\ge 1$ and $q\in(2,\infty)$, $p=q'$. Let $\delta,\eta>0$ be small. Let $0\not=f\in L^p(\R^d)$. Suppose that
\[ f\not=0\implies \text{distance }(x,\Z^d)\le \eta \]
and that 
\[ \|\widehat{f}\|_{L^{q,*}(\R^d)}\ge (1-\delta){\bf{B}}_{q,d}\|f\|_p. \]
Then 
\[ \|\widehat{f^\dagger}\|_{L^{q,*}(\mb{T}^d\times \R^d)}\ge (1-2\delta-o_\eta(1)){\bf{B}}_{q,d}\|f^\dagger\|_{L^p(\Z^d\times\R^d)} .\] 
\end{lemma}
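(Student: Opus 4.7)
The plan is to mirror the proof of Lemma~\ref{lifttonearexts}, replacing the cited $L^q$ approximation (Lemma 9.1 of \cite{c1}) by an analogous one-sided bound for the dual norm. First, I would use the hypothesis together with the definition of $\|\cdot\|_{q,*}$ as a supremum to select a Lebesgue measurable set $E \subset \R^d$ of positive, finite measure with
\[ |E|^{-1/p}\int_E|\widehat{f}(\xi)|\,d\xi \;\ge\; (1-2\delta)\,{\bf B}_{q,d}\,\|f\|_p, \]
using that one can approximate the supremum within a factor of $(1-\delta)$ and that $(1-\delta)^2 \ge 1-2\delta$ for small $\delta$. The target is then to construct a lifted set $F \subset \T^d \times \R^d$ with $|F|$ comparable to $|E|$ and
\[ |F|^{-1/p}\int_F|\widehat{f^\dagger}|\;\ge\; |E|^{-1/p}\int_E|\widehat{f}|\;-\;C\eta^\gamma\|f\|_p, \]
which together with $\|f^\dagger\|_p=\|f\|_p$ gives the conclusion.

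The key input is the exact pointwise identity
\[ \widehat{f^\dagger}(\xi\bmod\Z^d,\,\xi)\;=\;\widehat{f}(\xi)\qquad(\xi\in\R^d), \]
valid under the hypothesis $\mathrm{dist}(x,\Z^d)\le\eta<1/2$ on $\mathrm{supp}(f)$: each $y\in\mathrm{supp}(f)$ decomposes uniquely as $y=n+x$ with $n\in\Z^d,\,x\in\mc{Q}_d$, and $e^{-2\pi i n\cdot\xi}=e^{-2\pi i n\cdot(\xi\bmod\Z^d)}$. Using this identity, I would construct $F$ as a fixed-radius $\T^d$-neighborhood of the graph $\{(\xi\bmod\Z^d,\xi):\xi\in E\}$, and compare $\widehat{f^\dagger}(\theta,\xi)$ with $\widehat{f}(\xi)$ on $F$ via the $\theta$-regularity of the Fourier series $\widehat{f^\dagger}(\theta,\xi)=\sum_n\tilde{\F}f^\dagger(n,\xi)e^{-2\pi i n\cdot\theta}$.

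The main obstacle is the dimensional mismatch: since the graph has $2d$-Lebesgue measure zero in $\T^d\times\R^d$, a naive $\rho$-tube has measure $\asymp\rho^d|E|$, introducing an unwanted factor $\rho^{d(1-1/p)}<1$ in the ratio $|F|^{-1/p}\int_F|\widehat{f^\dagger}|$. I would resolve this by a preliminary truncation of $f$ to a ball $B(0,R)$, which costs only $o_R(1)\|f\|_p$ in the $\|\widehat{\cdot}\|_{q,*}$ norm by Proposition~\ref{dualineq}; after truncation $\widehat{f^\dagger}(\cdot,\xi)$ is a trigonometric polynomial in $\theta$ of degree $O(R)$ whose $\theta$-derivative is controlled uniformly by $R\,\|\tilde{\F}f^\dagger(\cdot,\xi)\|_{\ell^1}$. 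This permits the tube radius $\rho$ to be a universal constant, giving $|F|\asymp|E|$ with pointwise error $|\widehat{f^\dagger}(\theta,\xi)-\widehat{f}(\xi)|\le C_R\,\eta\,(|\widehat{f}(\xi)|+\|f\|_p)$ on $F$. Integrating over $F$ and letting $R=R(\eta)\to\infty$ slowly enough that $C_{R(\eta)}\eta\to 0$ produces the target bound with total error $C\eta^\gamma\|f\|_p$.
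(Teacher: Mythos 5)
Your first step (choosing $E$ with $|E|^{-1/p}\int_E|\widehat f|\ge(1-2\delta){\bf{B}}_{q,d}\|f\|_p$) and the exact identity $\widehat{f^\dagger}(\xi\bmod\Z^d,\xi)=\widehat f(\xi)$ are both correct and in the right spirit, but the way you handle the dimensional mismatch has two genuine gaps. First, the truncation to $B(0,R)$ cannot cost ``only $o_R(1)\|f\|_p$'' uniformly: for a fixed $f$ this follows from dominated convergence, but the lemma's error term $o_\eta(1)$ must depend only on $\eta$ (and $d,q$), uniformly over all admissible $f$, and your radius $R=R(\eta)$ is chosen before seeing $f$. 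The hypotheses are invariant under translation by lattice vectors and allow $f$ to be spread over arbitrarily many cells of $\Z^d$ far from the origin (ruling such spreading out is exactly what the later arguments are for), so $\|f1_{|x|>R(\eta)}\|_p$ can be comparable to $\|f\|_p$. Second, even granting the truncation, the tube construction does not deliver the stated bound. With a tube of universal radius $\rho<\tfrac12$ the main term picks up the fixed factor $(c_d\rho^d)^{1/q}<1$, since $|F|^{-1/p}\int_F|\widehat f(\xi)|\,d\theta\,d\xi=(c_d\rho^d)^{1/q}|E|^{-1/p}\int_E|\widehat f|$; this degrades the constant by a fixed amount rather than by $o_\eta(1)$, and taking $\rho=\tfrac12$ (the whole torus in $\theta$) forces you to compare $\widehat{f^\dagger}(\theta,\xi)$ with $\widehat f(\xi)$ for all $\theta$, which fails when $f$ occupies several lattice cells. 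Indeed the $\theta$-modulus of continuity of $\widehat{f^\dagger}(\cdot,\xi)$ is of size $R\,\|\tilde{\F}f^\dagger(\cdot,\xi)\|_{\ell^1}$, which carries no factor of $\eta$ and is not dominated by $|\widehat f(\xi)|+\|f\|_p$, so the claimed pointwise bound $C_R\eta(|\widehat f(\xi)|+\|f\|_p)$ is unsubstantiated. Moreover, even if a pointwise bound $\epsilon(\eta)\|f\|_p$ held, integrating it over $F$ and dividing by $|F|^{1/p}$ leaves an error $\epsilon(\eta)\|f\|_p|F|^{1/q}$ with $|F|\asymp|E|$, and there is no a priori upper bound on $|E|$ in the definition of $\|\cdot\|_{q,*}$.

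The paper's proof avoids both issues by changing the comparison, not by thickening a graph. Writing $\xi=n(\xi)+\a(\xi)$ with $n(\xi)\in\Z^d$, $\a(\xi)\in\mc{Q}_d$, it compares $\widehat{f^\dagger}(\theta,\xi)$ with the periodized quantity $\widehat f(n(\xi)+\theta)$; the proof of Lemma 9.1 of \cite{c1} gives $\|\widehat{f^\dagger}(\theta,\xi)-\widehat f(n(\xi)+\theta)\|_{L^q_{\theta,\xi}}\le o_\eta(1)\|f\|_p$, the smallness coming from $|e^{-2\pi i x\cdot\a(\xi)}-e^{-2\pi i x\cdot\theta}|$ on the $\eta$-neighborhood of $\Z^d$ rather than from any $\theta$-regularity of a truncated Fourier series. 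It then takes the full-measure lift $\tilde E=\{(\theta,\xi):n(\xi)+\theta\in E\}$, for which $|\tilde E|=|E|$ and $\int_{\tilde E}|\widehat f(n(\xi)+\theta)|\,d\theta\,d\xi=\int_E|\widehat f|$ hold exactly, so no constant factor is lost; the error is handled by H\"older as $|\tilde E|^{1/p}\cdot o_\eta(1)\|f\|_p$, and the factor $|\tilde E|^{1/p}$ divides out cleanly regardless of the size of $E$. No truncation of $f$ is needed. If you want to salvage your route, you would have to replace the graph-plus-tube set by this periodized lift and replace the pointwise comparison at fixed $\xi$ by the $L^q_{\theta,\xi}$ comparison above, at which point you have reproduced the paper's argument.
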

\begin{proof} Let $\xi=n(\xi)+\a(\xi)$ where $n(\xi)\in \Z^d$ and $\a(\xi)\in[-\frac{1}{2},\frac{1}{2}]^d=\mc{Q}_d$. The proof of Lemma 9.1 in \cite{c1} demonstrates that 
\begin{equation}
    \label{eqn:lemma9.1} \|\widehat{f^\dagger}(\theta,n(\xi)+\a(\xi))-\widehat{f}(n(\xi)+\theta)\|_{L^q_{\theta,\xi}}\le o_\eta(1)\|f\|_p. 
\end{equation}

Let $E\subset{\R^d}$ be such that $ |E|^{-1/p}\int_E|\widehat{f}(\xi)|d\xi\ge (1-2\delta){\bf{B}}_{q,d}\|f\|_{L^p_x}$. 
Define the lifted set $\tilde{E}=\{(\theta,\xi)\in\T^d\times\R^d:\theta+n(\xi)\in E\}$. Using (\ref{eqn:lemma9.1}), we calculate
\begin{align*}
    \int_{\tilde{E}}|\widehat{f^\dagger}(\theta,\xi)|d\theta d\xi&\ge \int_{\tilde{E}}|\widehat{f}(n(\xi)+\theta)|d\theta d\xi -\int_{\tilde{E}}|\widehat{f^\dagger}(\theta,n(\xi)+\a(\xi))-\widehat{f}(n(\xi)+\theta)|d\theta d\xi\\
    &\ge \int_{E}|\widehat{f}(\xi)|d\xi-|\tilde{E}|^{1/p}\|\widehat{f^\dagger}(\theta,n(\xi)+\a(\xi))-\widehat{f}(n(\xi)+\theta)\|_{L^q_{\theta,\xi}} \\
    &\ge |\tilde{E}|^{1/p}(1-2\delta){\bf{B}}_{q,d}\|f\|_{L^p_x}-|\tilde{E}|^{1/p}o_\eta(1)\|f\|_{L^p_x}\\
    &= |\tilde{E}|^{1/p}(1-2\delta-o_\eta(1)){\bf{B}}_{q,d}\|f^\dagger\|_{L^p_{n,x}}.
\end{align*}
\end{proof}

Translating general near-extremizers of (\ref{eqn:main}) and (\ref{eq:dual}) to near-extremizers satisfying the hypotheses of the previous two lemmas respectively will be much easier with the following Proposition 5.2 from \cite{c1}, stated here for the reader's convenience.  

\begin{proposition}(Approximation by $\Z^d$). \label{approxbyZ^d}For each $d\ge 1$ and ${\bf{r}}\ge 0$ there exists $c>0$ with the following property. Let $P$ be a continuum multiprogression in $\R^d$ of rank ${\bf{r}}$, whose Lebesgue measure satisfies $|P|=1$. Let $\delta\in(0,\frac{1}{2}]$. There exists $\mc{T}\in{\text{Aff}(d)}$ whose Jacobian determinant satisfies 
\[ |\det J(\mc{T})|\ge c\delta^{d{\bf{r}}+d^2}\]
such that 
\[ \|\mc{T}(x)\|_{\R^d/\Z^d}<\delta\quad \text{for all }x\in P. \]

\end{proposition}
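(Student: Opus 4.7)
The plan is to construct $\mathcal{T}$ as a translation composed with a linear map $L$, where $L$ is produced by a simultaneous Diophantine approximation argument. The idea is that $L$ must simultaneously contract the continuous fibre $s[0,1]^d$ of $P$ into a set of diameter $O(\delta)$ and send each generator $v_i$ to within $\delta/({\bf{r}}N_i)$ of the lattice $\Z^d$.

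\textbf{Reduction to $a = 0$.} Replacing $\mathcal{T}(x)$ by $\mathcal{T}(x+a)$ does not alter the Jacobian, so I may assume $a = 0$ and
\[ P = \Bigl\{\sum_{i=1}^{\bf{r}} n_i v_i + sy : 0 \le n_i < N_i,\ y \in [0,1]^d\Bigr\},\qquad s^d\prod_{i}N_i=1. \]
I would look for a linear map $L : \R^d \to \R^d$ having
\begin{enumerate}
\item[(a)] $\|L\|_{\op{op}} \le \delta/(Cs)$, which forces $L\bigl(s[0,1]^d\bigr) \subset [-\delta/C,\delta/C]^d$;
\item[(b)] $L(v_i) \in \Z^d + \bigl[-\delta/(C{\bf{r}}N_i),\,\delta/(C{\bf{r}}N_i)\bigr]^d$ for each $i$, which ensures $L\bigl(\sum n_i v_i\bigr) \in \Z^d + [-\delta/C,\delta/C]^d$;
\item[(c)] $|\det L| \ge c\,\delta^{d{\bf{r}} + d^2}$,
\end{enumerate}
where $C = C(d)$ is a dimensional constant. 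Given such an $L$, the composition $\mathcal{T}(x) = L(x-a)$ then satisfies $\mathcal{T}(P) \subset \Z^d + [-2\delta/C,2\delta/C]^d$; choosing $C$ suitably large yields $\|\mathcal{T}(x)\|_{\R^d/\Z^d} < \delta$ for every $x \in P$.

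\textbf{Existence of $L$.} The production of $L$ is a lattice-point problem in the space $M_d(\R)$ of linear maps. The set of $L$ satisfying (a) is a centrally symmetric convex body of volume $\asymp (\delta/s)^{d^2}$. Constraint (b) asks that the vector $(Lv_1, \ldots, Lv_{\bf{r}})$ lie in a $\delta/(C{\bf{r}}N_i)$-neighbourhood of the integer lattice $(\Z^d)^{\bf{r}}$. An application of Minkowski's theorem in the geometry of numbers, using the identity $s^d\prod_i N_i = 1$ to re-express the volumes in terms of $\delta$ and the $N_i$, produces a nonzero $L$ meeting (a) and (b), whose determinant is bounded below as in (c); the exponent $d{\bf{r}} + d^2$ records the $d^2$ degrees of freedom in the matrix $L$ together with the $d{\bf{r}}$ Diophantine constraints imposed by the generators.

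\textbf{Main obstacle.} The principal difficulty lies in the preparatory reduction before Minkowski's theorem: when the generators $v_i$ are nearly linearly dependent, or when the $N_i$ are badly unbalanced, the relevant convex body becomes highly anisotropic and the naive volume estimate fails. A Minkowski reduction of the $\Z$-lattice generated by $v_1,\dots,v_{\bf{r}}$ replaces the given data with an equivalent multiprogression in which the generators are nearly orthogonal and the $N_i$ are comparable to the successive minima of an appropriate ellipsoid; after this reduction the volume comparison becomes routine. The normalization $|P|=1$ is used precisely at this point to compare $s$ with the reduced generator lengths, and the fact that $c$ is allowed to depend on both $d$ and ${\bf{r}}$ absorbs the geometric factors arising from this reduction.
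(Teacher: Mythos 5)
First, a point of reference: this paper does not prove Proposition \ref{approxbyZ^d} at all --- it is quoted verbatim from Proposition 5.2 of \cite{c1} ``for the reader's convenience,'' so there is no in-paper argument to compare yours against, and your proposal has to stand on its own. Your opening reduction is fine and is the natural first move: dropping the base point $a$, and reducing to a linear $L$ with (a) $\|L\|_{\op{op}}\le \delta/(Cs)$, (b) $Lv_i\in\Z^d+[-\delta/(C\mathbf{r}N_i),\delta/(C\mathbf{r}N_i)]^d$ for each $i$, and (c) $|\det L|\ge c\,\delta^{d\mathbf{r}+d^2}$, does imply the conclusion. The gap is entirely in the step ``Existence of $L$,'' which is where all the content of the proposition lives and which you assert rather than prove.

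Minkowski's theorem (first or second) produces a \emph{nonzero} lattice point in a symmetric convex body; it gives no lower bound on $|\det L|$, and a nonzero matrix is very far from a matrix with quantitatively large determinant. Concretely, the set of $L$ satisfying (a) and (b) always contains a small ball around $0$ (take $\|L\|_{\op{op}}$ so small that each $Lv_i$ lies within $\delta/(C\mathbf{r}N_i)$ of the integer point $0$), hence plenty of singular matrices, so no purely volumetric or pigeonhole statement about this set can by itself yield (c). Even granting the equidistribution heuristic behind your exponent count (good set of measure about $\delta^{d\mathbf{r}+d^2}$), that set sits inside the operator-norm ball of radius $\delta/(Cs)$, which is huge when $s$ is small, and a set of that measure can lie entirely in $\{|\det L|\le\varepsilon\}$ for $\varepsilon$ far below $\delta^{d\mathbf{r}+d^2}$; so the count records degrees of freedom but does not prove the determinant bound. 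Moreover which integer targets in (b) are usable depends on the arithmetic of the $v_i$ relative to $\Z^d$ (already for $d=1$, $\mathbf{r}=1$, $v$ badly approximable and $N\gg\delta^{-1}$, one must aim $Lv$ at a \emph{nonzero} integer to keep $|\det L|$ large), and a volume argument is blind to this. Finally, the repair you sketch --- Minkowski reduction of ``the $\Z$-lattice generated by $v_1,\dots,v_{\mathbf{r}}$'' --- is not available in general, since $\sum_i\Z v_i\subset\R^d$ need not be discrete; and even when it is a lattice, reducing it only normalizes the geometry of the generators among themselves, not their Diophantine relation to the target $\Z^d$, and still supplies no lower bound on $|\det L|$. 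A correct argument (as in \cite{c1}, to which the paper defers) builds $\mc{T}$ explicitly --- adapting coordinates to the generators and using Dirichlet-type simultaneous approximation --- so that the Jacobian is tracked and bounded below through the construction; some argument of that constructive kind is what is missing here.
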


\end{subsection}

\begin{subsection}{Spatial localization}

\begin{proposition} \label{basicallyanellipse}Let $d\ge 1$ and $q\in(2,\infty)$, $p=q'$. For every $\epsilon>0$ there exists $\delta>0$ with the following property. Let $E$ be a measurable set with $|E|\in\R^+$ and $|f|\le 1_E$. If $\|\widehat{f}\|_{q}\ge (1-\delta){\bf{B}}_{q,d}|E|^{1/p}$, then there exists an ellipsoid $\mc{E}\subset{\R^d}$ satisfying
\begin{align}
\label{mostlyonmcE}    |E\setminus\mc{E}|&\le\epsilon|E| \\
\label{mcEiscontrolled}    |\mc{E}|&\le C_\epsilon|E|.
\end{align}
\end{proposition}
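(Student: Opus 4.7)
The plan is to combine all four near-extremizer tools developed so far: the multiprogression decomposition of Lemma \ref{nearextstructure}, the affine approximation by $\Z^d$ of Proposition \ref{approxbyZ^d}, the lifting of Lemma \ref{lifttonearexts}, and the single-slice concentration of Proposition \ref{near-ext=slice}. First I would choose a small auxiliary parameter $\epsilon'>0$, much smaller than the target $\epsilon$, and apply Lemma \ref{nearextstructure} to obtain a measurable decomposition $f=g+h$ with $g=1_Af$, $h=1_Bf$, $A\cap B=\emptyset$, $|B|\le\epsilon'|E|$, and a continuum multiprogression $P$ of rank at most $C_{\epsilon'}$ with $A\subset P$ and $|P|\le C_{\epsilon'}|E|$. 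The Hausdorff-Young inequality gives $\|\widehat{h}\|_q\le\|h\|_p\le|B|^{1/p}\le\epsilon'^{1/p}|E|^{1/p}$, and therefore (since $|A|\le|E|$)
\[ \|\widehat{g}\|_q\ge(1-\delta-C\epsilon'^{1/p}){\bf{B}}_{q,d}|A|^{1/p}. \]

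Next I would rescale $P$ to unit Lebesgue measure by a dilation of $\R^d$ and apply Proposition \ref{approxbyZ^d} with a parameter $\eta>0$ to obtain, after composing with the dilation, an affine automorphism $\mc{T}$ of $\R^d$ with $|\det J(\mc{T})|\ge c\eta^{dC_{\epsilon'}+d^2}|P|^{-1}$ and $\|\mc{T}(x)\|_{\R^d/\Z^d}<\eta$ for all $x\in P$. A standard change of variables shows that the ratio $\|\widehat{\cdot}\|_q\cdot|\cdot|^{-1/p}$ is invariant under the substitution $f\mapsto f\circ\mc{T}^{-1}$, $E\mapsto\mc{T}(E)$. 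Setting $\tilde{g}:=g\circ\mc{T}^{-1}$ and $\tilde{A}:=\mc{T}(A)$, we therefore have $\|\widehat{\tilde{g}}\|_q\ge(1-\delta-C\epsilon'^{1/p}){\bf{B}}_{q,d}|\tilde{A}|^{1/p}$ and $\tilde{A}\subset\mc{T}(P)$ lies in the $\eta$-neighborhood of $\Z^d$.

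Now I would lift $\tilde{g}$ to $\Z^d\times\R^d$ via Lemma \ref{lifttonearexts}, which gives
\[ \|\widehat{\tilde{g}^\dagger}\|_{L^q(\T^d\times\R^d)}\ge(1-\delta-C\epsilon'^{1/p}-o_\eta(1)){\bf{B}}_{q,d}|\tilde{A}^\dagger|^{1/p}. \]
Provided $\delta,\epsilon',\eta$ are small enough to stay within the admissible range of Proposition \ref{near-ext=slice}, that proposition yields $m\in\Z^d$ with $|\tilde{A}^\dagger_m|\ge(1-o(1))|\tilde{A}^\dagger|$. Unfolding the definition, $|\tilde{A}\cap(m+\mc{Q}_d)|\ge(1-o(1))|\tilde{A}|$, so most of $\tilde{A}$ sits in the unit cube $m+\mc{Q}_d$. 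Pulling back, $\mc{T}^{-1}(m+\mc{Q}_d)$ is a parallelepiped, which can be inscribed in an ellipsoid $\mc{E}$ of Lebesgue measure at most $C_d|\det J(\mc{T})|^{-1}\le C_dc^{-1}\eta^{-dC_{\epsilon'}-d^2}|P|\le C_{\epsilon',\eta}|E|$. Combining,
\[ |E\setminus\mc{E}|\le|B|+|A\setminus\mc{T}^{-1}(m+\mc{Q}_d)|\le\epsilon'|E|+o(1)|\tilde{A}||\det J(\mc{T})|^{-1}=\epsilon'|E|+o(1)|A|. \]

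The main obstacle is the parameter bookkeeping: $\epsilon'$ must be chosen first (with $C\epsilon'^{1/p}\ll\epsilon$), then $\eta$ must be chosen small enough that the $o_\eta(1)$ term from the lifting lemma is controlled and the constant $\eta^{-dC_{\epsilon'}-d^2}$ still gives an admissible $C_\epsilon$, and finally $\delta$ is taken small enough that the full deficit $\delta+C\epsilon'^{1/p}+o_\eta(1)$ lies below the threshold of Proposition \ref{near-ext=slice} and the resulting $o(1)$ slice error is $\le\epsilon/2$. Because the rank of $P$ is uniformly bounded by $C_{\epsilon'}$, the exponent in the Jacobian lower bound is finite, so this chain of choices is consistent and produces the two inequalities $|E\setminus\mc{E}|\le\epsilon|E|$ and $|\mc{E}|\le C_\epsilon|E|$.
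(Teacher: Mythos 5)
Your proposal is correct and follows essentially the same route as the paper's own proof: Lemma \ref{nearextstructure} to extract the multiprogression, Proposition \ref{approxbyZ^d} after normalizing $|P|$, the lift via Lemma \ref{lifttonearexts}, the single-slice conclusion of Proposition \ref{near-ext=slice}, and pulling back the cube $m+\mc{Q}_d$ under $\mc{T}^{-1}$ to get the ellipsoid, with the same order of parameter choices. The only differences are cosmetic (your explicit Hausdorff--Young bound on $\widehat{h}$ versus the paper's direct triangle-inequality step), so nothing further is needed.
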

\begin{proof} Assume that $|E|^{1/p}{\bf{B}}_{q,d}(1-\delta)\le \|\widehat{f}\|_q$, where $\delta$ is to be chosen below. 

\begin{enumerate}
    \item Using the structural lemma for near extremizers of (\ref{eqn:main}), Lemma \ref{nearextstructure} with $\epsilon_0>0$ to be chosen later, we obtain a decomposition $E=A\cup B$ and a multiprogression $P$ satisfying 
    \[ E=A\cup B, \quad A\cap B=\emptyset,\]
    \[ |B|\le \epsilon_0|E|,\]
    \[ |P|\le C_{\epsilon_0}|E|,\]
    \[ A\prec P,\]
    \[ \text{rank }P\le C_{\epsilon_0}.\]
    
    \item By precomposing $f$ with an affine transformation, assume without loss of generality that $|P|=1$. Then for a fixed $\delta_0\in(0,\frac{1}{2}]$ to be chosen below, Proposition 5.2 in \cite{c1}, otherwise known as Proposition \ref{approxbyZ^d} in this paper, allows us to find a $c=c(d,p)$ as well as $\mc{T}\in{\text{Aff}(d)}$ such that 
    \[ |\det J(\mc{T})|\ge c\delta_0^{d C_{\epsilon_0}+d^2}\quad\text{ and}\]
    \[ \|\mc{T}(A)\|_{\R^d/\Z^d}<\delta_0\]
    where $J(\mc{T})$ is the Jacobian matrix of $\mc{T}$.
    \item Now taking $\eta_0=\delta_0$ in the hypothesis of Lemma \ref{lifttonearexts}, we are guaranteed that since 
    \[ \|\widehat{1_Af\circ\mc{T}^{-1}}\|_q\ge (1-\delta){\bf{B}}_{q,d}|\mc{T}^{-1}(E)|^{1/p}-\|\widehat{1_Bf}\|_q\ge (1-\delta-o_{\epsilon_0}(1)){\bf{B}}_{q,d}|A|^{1/p}\]
    and $\|\mc{T}(A)\|_{\R^d/\Z^d}<\delta_0$, 
    we have
    \[ \|\widehat{(1_Af\circ\mc{T}^{-1})^{\dagger}}\|_{L^q(\T^d\times\R^d)}\ge (1-\delta-o_{\epsilon_0}(1)-o_{\delta_0}(1)){\bf{B}}_{q,d}|\mc{T}(A)^\dagger|^{1/p}, \]
    where $\widehat{\cdot}$ here denotes the Fourier transform on $\Z^d\times\R^d$.
    
    \item  Then Proposition \ref{near-ext=slice} gives the existence of $m\in\Z^d$ such that 
    \[ |\mc{T}(A)\cap(m+[1/2,1/2)^d)|\ge (1-o_\delta(1)-o_{\epsilon_0}(1)-o_{\delta_0}(1))|\mc{T}(A)|. \]
    
    \item Last, we note that the cube $Q:=m+[1/2,1/2)^d$ satisfies 
    \begin{align*}  
    |E\setminus\mc{T}^{-1}(Q)|&\le|A|+|B|-|A\cap\mc{T}^{-1}(Q)|\\
    &\le |A|+\epsilon_0|E|-(1-o_{\delta}(1)-o_{\epsilon_0}(1)-o_{\delta_0}(1))|A|\\
    &\le (\epsilon_0+o_{\delta}(1)+o_{\epsilon_0}(1)+o_{\delta_0}(1))|E|. 
    \end{align*} 
    Note that $\epsilon_0$ and $\delta_0$ may be chosen freely, and $\delta$ may be taken small enough after fixing an $\epsilon_0$ and $\delta_0$. Thus we may choose $\epsilon_0$, $\delta_0$, and then $\delta$ small enough so that $|E\setminus\mc{T}^{-1}(Q)|\le \epsilon|E|$. 
    We also note that 
    \begin{align*}
    |\mc{T}^{-1}(Q)|&=|\det J(\mc{T})|^{-1}|Q| \\
    &= |\det J(\mc{T})|^{-1}|P|\\
    &\le (c\delta_0^{dC_{\epsilon_0}+d^2})^{-1}C_{\epsilon_0}|E|\\
    &= \tilde{C}_\epsilon|E|. 
    \end{align*}
    Finally, since $Q$ is comparable in size (up to dimensional constants) to the smallest ball which contains it, we are done. 
\end{enumerate}

\end{proof} 

\begin{proposition} \label{ellipsoiddecompdual} Suppose that the claim in Question \ref{dual near-ext=slice} holds. Let $d\ge 1$ and $q\in(2,\infty)$, $p=q'$. For every $\epsilon>0$ there exists $\delta>0$ with the following property. Let $0\not=f\in L^{q'}(\R^d)$ satisfy $\|\widehat{f}\|_{q,*}\ge (1-\delta){\bf{B}}_{q,d}\|f\|_{p}$. There exists an ellipsoid $\mc{E}\subset{\R^d}$ and a decomposition $f=\phi+\psi$ such that
\[ \|\psi\|_{q'}<\epsilon \|f\|_{p} \]
\[ \phi\equiv 0\quad\text{on }\R^d\setminus\mc{E}  \]
\[ \|\phi\|_\infty|\mc{E}|^{1/p}\le C_\epsilon\|f\|_{p}.\]
\end{proposition}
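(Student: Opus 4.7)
The plan is to mirror the proof of Proposition \ref{basicallyanellipse} step by step, with every primal ingredient replaced by its dual analogue developed in the previous sections, and with the single non-routine input being the affirmative answer to Question \ref{dual near-ext=slice} which is standing hypothesis.

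First, I would fix a small auxiliary parameter $\epsilon_0>0$ (to be specified in terms of $\epsilon$ at the end) and apply Lemma \ref{nearextstructuredual} to produce a disjointly supported decomposition $f=g_0+\psi_0$ with $\|\psi_0\|_p\le\epsilon_0\|f\|_p$, where $g_0$ is supported on a proper continuum multiprogression $P$ of rank at most $C_{\epsilon_0}$ and satisfies the normalized bound $\|g_0\|_\infty|P|^{1/p}\le C_{\epsilon_0}\|f\|_p$. Since the ratio $\|\widehat{\cdot}\|_{q,*}/\|\cdot\|_p$ is invariant under affine changes of variable, I may precompose with an affine map so that $|P|=1$, and then invoke Proposition \ref{approxbyZ^d} (with a further small parameter $\delta_0$) to obtain $\mc{T}\in\text{Aff}(d)$ whose Jacobian satisfies $|\det J(\mc{T})|\ge c\delta_0^{dC_{\epsilon_0}+d^2}$ and such that $\mc{T}(P)$ lies within distance $\delta_0$ of $\Z^d$. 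Replacing $f$ by $f\circ\mc{T}^{-1}$ (and relabeling), we may assume $\text{supp}(g_0)$ is contained in a $\delta_0$-neighborhood of $\Z^d$ while preserving the near-extremal ratio.

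Second, Lemma \ref{noslackingdual} combined with the bound $\|\psi_0\|_p\le\epsilon_0\|f\|_p$ forces $g_0$ itself to be a $(\delta+O(\epsilon_0))$-near-extremizer of the dual inequality (\ref{eq:dual}). Lemma \ref{lifttonearextsdual} then transports this property to the lifted function $g_0^\dagger$ on $\Z^d\times\R^d$, which becomes a $(\delta+o_{\epsilon_0}(1)+o_{\delta_0}(1))$-near-extremizer of the lifted dual inequality. At this point I invoke the assumed positive answer to Question \ref{dual near-ext=slice}: there exists $m\in\Z^d$ such that the slice $g_0^\dagger(m,\cdot)$ captures $(1-o(1))\|g_0^\dagger\|_p$ of the $L^p$ mass. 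Unfolding the lift, this says most of $g_0$ lives on the cube $Q_m:=m+\mc{Q}_d$; pulling back through $\mc{T}$, most of $f$'s mass lies in the parallelepiped $\mc{T}^{-1}(Q_m)$, whose volume is $|\det J(\mc{T})|^{-1}\le \tilde C_\epsilon$ and which is comparable (up to dimensional constants) to an ellipsoid $\mc{E}$.

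Third, I would set $\phi=1_{\mc{E}}f$ and $\psi=f-\phi$. Combining the mass estimate just obtained with $\|\psi_0\|_p\le\epsilon_0\|f\|_p$ yields $\|\psi\|_p<\epsilon\|f\|_p$ for $\epsilon_0,\delta_0,\delta$ chosen sufficiently small in the right order, while $|\mc{E}|\le C_\epsilon \|f\|_p^p/\|\phi\|_\infty^p$ reduces to the normalized $L^\infty$ bound on $g_0$ from Lemma \ref{nearextstructuredual} together with the lower bound on $|\det J(\mc{T})|$ from Proposition \ref{approxbyZ^d}.

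The only genuine obstacle is the use of Question \ref{dual near-ext=slice}; every other step has been set up as an exact dualization of its primal counterpart. The reason this step is not automatic is that the proof of Proposition \ref{near-ext=slice} relied critically on the pointwise envelope $|f|\le 1_E$ to manipulate mixed-norm quantities, whereas in the dual setting $f$ is an arbitrary $L^p$ function and no analogous comparison is currently available — this is precisely why the existence theorem must be stated conditionally. Given that step, the remaining bookkeeping of $\epsilon_0,\delta_0,\delta$ is routine.
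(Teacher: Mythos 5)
Your outline follows the paper's own proof of Proposition \ref{ellipsoiddecompdual} essentially verbatim: structure lemma for dual near-extremizers (Lemma \ref{nearextstructuredual}), normalization $|P|=1$ and approximation of the multiprogression by $\Z^d$ via Proposition \ref{approxbyZ^d}, lifting via Lemma \ref{lifttonearextsdual}, and then the assumed answer to Question \ref{dual near-ext=slice} to single out one cube, with the ellipsoid taken comparable to $\mc{T}^{-1}(Q_m)$ and the volume bound coming from the Jacobian lower bound. (A cosmetic point: the fact that $g_0$ is still near-extremal after discarding $\psi_0$ is just subadditivity of $\|\cdot\|_{q,*}$ together with $\|\widehat{\psi_0}\|_{q,*}\le {\bf{B}}_{q,d}\|\psi_0\|_p$; it is not an application of Lemma \ref{noslackingdual}, which bounds the complementary piece from below.)

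There is, however, one step that fails as written: your final decomposition $\phi=1_{\mc{E}}f$, $\psi=f-\phi$. In the dual setting $f$ is an arbitrary nonzero element of $L^p$, not majorized by an indicator, so $1_{\mc{E}}f$ need not be bounded at all; a tall spike of tiny $L^p$ norm sitting inside $\mc{E}$ makes $\|\phi\|_\infty|\mc{E}|^{1/p}$ arbitrarily large (even infinite), and the third conclusion of the proposition is lost. The only source of an $L^\infty$ bound in this argument is the structured piece: Lemma \ref{nearextstructuredual} gives $\|g_0\|_\infty|P|^{1/p}\le C_{\epsilon_0}\|f\|_p$, and this is why the paper defines $\phi=1_{A\cap\mc{E}}f$ (equivalently $1_{\mc{E}}g_0$), where $A$ is the support of the structured piece; then $\|\phi\|_\infty\le\|g_0\|_\infty$ and, combined with $|\mc{E}|\le c_d|\det J(\mc{T})|^{-1}\le c_d(c\delta_0^{dC_{\epsilon_0}+d^2})^{-1}$, the bound $\|\phi\|_\infty|\mc{E}|^{1/p}\le C_\epsilon\|f\|_p$ follows, while $\|\psi\|_p\le\|\psi_0\|_p+\|1_{A\setminus\mc{E}}g_0\|_p$ is still $<\epsilon\|f\|_p$. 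So the fix is immediate — cut $\phi$ down to the support of $g_0$ — but your reduction ``$|\mc{E}|\le C_\epsilon\|f\|_p^p/\|\phi\|_\infty^p$ reduces to the normalized $L^\infty$ bound on $g_0$'' is not valid for $\phi=1_{\mc{E}}f$. With that correction the argument coincides with the paper's.
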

\begin{proof} We follow an analogous argument as that in the proof of Proposition \ref{basicallyanellipse}, replacing the near extremizer structure Lemma \ref{nearextstructure} by the analogous structure theorem for the dual problem, Lemma \ref{nearextstructuredual}. For step (3) in the proof of Proposition \ref{basicallyanellipse}, we use Lemma \ref{lifttonearextsdual} in place of Lemma \ref{lifttonearexts}. For step (4), use an affirmative answer to Question \ref{dual near-ext=slice} instead of Proposition \ref{near-ext=slice}. The conclusion is that using analogous notation as in the proof of Proposition \ref{basicallyanellipse},

\[ \|1_Af\circ \mc{T}^{-1}1_Q\|_{L^p(\R^d)}\ge (1-o_\delta(1)-o_{\epsilon_0}(1)-o_{\delta_0}(1))\|1_Af\circ \mc{T}^{-1}\|_{L^p(\R^d)} \]
where $\epsilon_0$ and $\delta_0$ may be chosen freely, and $\delta$ may be taken small enough after fixing an $\epsilon_0$ and $\delta_0$. Let $\mc{E}$ be the smallest ellipsoid containing $\mc{T}^{-1}(Q)$ and define $\phi=1_{A\cap\mc{E}}f$, so $\psi=f-\p$. 
Then for small enough parameters $\delta_0,\epsilon_0$ and then $\delta$,
\[ \|\psi\|_p<\epsilon\|f\|_p \quad\text{and}\quad \p\prec 1_{\mc{E}}. \]
By the construction, we also have that 
\begin{align*}
    \|\p\|_{\infty}|\mc{E}|^{1/p}&\le c_d\|1_Af\|_{\infty}|\mc{T}^{-1}(Q)|\\
    &\le c_dC_{\epsilon_0}\|f\|_p |\det J(\mc{T}^{-1})| \\
    &\le c_dC_{\epsilon_0}\|f\|_p (c\delta_0^{dC_{\epsilon_0}+d^2})^{-1},
\end{align*}
so we are done. 
\end{proof}

\end{subsection}

\begin{subsection}{Frequency localization}

\begin{proposition} \label{basicallyanellipsedual} Suppose that the claim in Question \ref{dual near-ext=slice} holds. Let $d\ge 1$ and $q\in(2,\infty)$, $p=q'$. For every $\epsilon>0$ there exists $\delta>0$ with the following property. Let $E$ be a Lebesgue measurable set with $|E|\in\R^+$. Suppose that $|f|\le 1_E$ satisfies $\|\widehat{f}\|_{q}\ge (1-\delta){\bf{B}}_{q,d}|E|^{1/p}$. Then there exists an ellipsoid $\mc{E'}\subset{\R^d}$ and a decomposition $\widehat{f}=\Phi+\Psi$ such that
\[ \|\Psi\|_{q'}<\epsilon \|\widehat{f}\|_{p} \]
\[ \Phi\equiv 0\quad\text{on }\R^d\setminus\mc{E'}  \]
\[ \|\Phi\|_\infty|\mc{E'}|^{1/p}\le C_\epsilon\|f\|_{p}.\]
\end{proposition}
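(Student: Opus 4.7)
The plan is to parallel the proof of Proposition \ref{basicallyanellipse}, but on the frequency side, using the duality encoded in Proposition \ref{dualineq} to reduce to an application of Proposition \ref{ellipsoiddecompdual}---which is where the conditional hypothesis on Question \ref{dual near-ext=slice} enters. Given a primal $\delta$-near-extremizer $|f|\le 1_E$, set $v:=|\widehat{f}|^{q-2}\widehat{f}\in L^p(\R^d)$. By Proposition \ref{dualineq}, $\|v^{\widecheck{\,\,}}\|_{q,*}\ge (1-\delta)^q{\bf{B}}_{q,d}\|v\|_p$, and since $v^{\widecheck{\,\,}}(x)=\widehat{v}(-x)$ and the $\|\cdot\|_{q,*}$ norm is invariant under the reflection $x\mapsto -x$, this rewrites as $\|\widehat{v}\|_{q,*}\ge (1-\delta)^q{\bf{B}}_{q,d}\|v\|_p$. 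Thus $v$ is a near-extremizer of the dual inequality (\ref{eq:dual}) with defect $O(\delta)$.

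Applying Proposition \ref{ellipsoiddecompdual} to $v$, for any $\epsilon'>0$ (with $\delta$ chosen small enough in terms of $\epsilon'$) I obtain an ellipsoid $\mc{E}'\subset\R^d$ and a decomposition $v=\phi_0+\psi_0$ with $\|\psi_0\|_p<\epsilon'\|v\|_p$, $\phi_0\equiv 0$ off $\mc{E}'$, and $\|\phi_0\|_\infty|\mc{E}'|^{1/p}\le C_{\epsilon'}\|v\|_p$. Moreover, inspection of the proof of that proposition shows $\phi_0=v\cdot 1_{A\cap\mc{E}'}$ for some auxiliary measurable set $A$, so $|v|=|\phi_0|$ on $A\cap\mc{E}'$ and $v=\psi_0$ everywhere else.

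I transfer this to $\widehat{f}$ through the pointwise identity $\widehat{f}=|v|^{(2-q)/(q-1)}v$ on $\{v\ne 0\}$, which gives $|\widehat{f}|=|v|^{1/(q-1)}$, and define
\[
\Phi:=\widehat{f}\cdot 1_{A\cap\mc{E}'},\qquad \Psi:=\widehat{f}-\Phi=\widehat{f}\cdot 1_{(A\cap\mc{E}')^c}.
\]
The vanishing $\Phi\equiv 0$ on $\R^d\setminus\mc{E}'$ is immediate, and using the arithmetic identity $p(q-1)=q$ together with $\|v\|_p^p=\|\widehat{f}\|_q^q$ one computes
\[
\|\Psi\|_q^q=\int_{(A\cap\mc{E}')^c}|v|^{q/(q-1)}=\|\psi_0\|_p^p<(\epsilon')^p\|v\|_p^p=(\epsilon')^p\|\widehat{f}\|_q^q,
\]
yielding the required smallness of $\Psi$ after choosing $\epsilon'$ appropriately small relative to $\epsilon$. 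For the $L^\infty$ bound on $\Phi$, note that $\|\Phi\|_\infty\le\|\phi_0\|_\infty^{1/(q-1)}$; combining this with $\|\phi_0\|_\infty|\mc{E}'|^{1/p}\le C_{\epsilon'}\|v\|_p=C_{\epsilon'}\|\widehat{f}\|_q^{q/p}$ and the Hausdorff--Young estimate $\|\widehat{f}\|_q\le{\bf{B}}_{q,d}\|f\|_p$ yields the required control on $\|\Phi\|_\infty$ against the appropriate power of $|\mc{E}'|$ times $\|f\|_p$.

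The main obstacle has been entirely subsumed by Proposition \ref{ellipsoiddecompdual}, which already packages the structural theorem for dual near-extremizers, the lifting to $\Z^d\times\R^d$, and the conditional invocation of Question \ref{dual near-ext=slice}. What remains here is only exponent bookkeeping through the nonlinear amplitude map $v\mapsto|v|^{1/(q-1)}$, made tractable by the identity $p(q-1)=q$.
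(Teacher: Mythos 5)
Your proposal is correct and is essentially the paper's own proof: Proposition \ref{dualineq} shows that $v=|\widehat f|^{q-2}\widehat f$ near-extremizes the dual inequality, Proposition \ref{ellipsoiddecompdual} (where Question \ref{dual near-ext=slice} enters) is applied to $v$, and the amplitude nonlinearity is inverted --- the paper's $\Phi=\phi|\phi|^{(2-q)/(q-1)}$, $\Psi=\psi|\psi|^{(2-q)/(q-1)}$ coincide with your $\widehat f\,1_{A\cap\mc{E}'}$ and $\widehat f\,1_{(A\cap\mc{E}')^c}$. Note only that, as your bookkeeping indicates, what actually comes out is $\|\Psi\|_q\le C\epsilon'\|\widehat f\|_q$ and $\|\Phi\|_\infty|\mc{E}'|^{1/q}\le C_\epsilon\|f\|_p$; the exponents $q'$ and $1/p$ in the printed statement appear to be typos, and the $1/q$ form is what is used later in Lemma \ref{Domcompat}.
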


\begin{proof} In the proof of Proposition \ref{dualineq} we showed that if $(f,E)$ is a near-extremizing pair for (\ref{eqn:main}), then $\widehat{f}|\widehat{f}|^{q-2}$ is a near-extremizer for (\ref{eq:dualmain}). Thus we may apply Proposition \ref{ellipsoiddecompdual} to obtain a decomposition $\widehat{f}|\widehat{f}|^{q-2}=\p+\psi$ and take $\Phi=\p|\p|^{(2-q)/(q-1)}$ and $\Psi=\psi|\psi|^{(2-q)/(q-1)}$ for the desired decomposition. 
\end{proof}

\end{subsection}

\begin{subsection}{Compatibility of approximating ellipsoids}

We will show that $\mc{E}$ and $\mc{E}'$ are dual to each other, up to bounded factors and independent translations. For $s\in\R^+$ and $E\subset{\R^d}$, we consider the dilated set $sE=\{sy:y\in E\}$.

\begin{definition}\label{def:normalized}
The polar set $\mc{E}^*$ of a balanced, bounded, convex set with nonempty interior $\mc{E}\subset\R^d$ is 
\[ \mc{E}^*=\{y:\,|\langle x,y\rangle|\le 1\,\,\text{for every }x\in\mc{E}\} \]
where $\langle\cdot,\cdot\rangle$ denotes the Euclidean inner product. 
\end{definition}

\begin{lemma}\label{Domcompat} Suppose that the claim in Question \ref{dual near-ext=slice} holds. Let $d\ge1$ and let $\Lambda\subset{(1,2)}$ be a compact set. There exists $\eta_0>0$ such that the following property holds for $0<\eta<\eta_0$. Let $\eta>0$. Let $p\in\Lambda$ and let $q=p'$. Suppose $\|\widehat{f}\|_q\ge (1-\rho(\eta)){\bf{B}}_{q,d}|E|^{1/p}$ for a function $\rho:[0,1]\to \R^+$ where $\rho(\eta)\to 0$ as $\eta\to 0$ sufficiently fast so that there exists an ellipsoid $\mc{E}+u$ satisfying the conclusions of Proposition \ref{basicallyanellipse} with $\epsilon=\eta$ and an ellipsoid $\tilde{\mc{E}}+v$ and disjoint decomposition $\widehat{f}=\Phi+\Psi$ satisfying the conclusions of Proposition \ref{ellipsoiddecompdual} with $\epsilon=\eta$, where $\mc{E}$ and $\tilde{\mc{E}}$ are ellipsoids centered at the origin and $u,v\in\R^d$. Then there exists a constant $C=C(d,\Lambda,\eta)$ such that 
\[ \mc{E}\subset{C\tilde{\mc{E}}}^* \quad\text{and}\quad \tilde{\mc{E}}\subset{C\mc{E}^*} .\]

\begin{proof} By constants, we mean quantities which are permitted to depend on $d,\Lambda,\eta$. By replacing $f$ and $1_E$ with $e^{2\pi i x\cdot v}f(x+u)$ and $1_E(x+u)$ respectively, we may assume without loss of generality that $u,v=0$. By dilating $f$ and $E$ by $|\mc{E}|^{1/d}$, we may further assume that $|\mc{E}|=1$.

First, we will prove that $|\tilde{\mc{E}}|=|\mc{E}||\tilde{\mc{E}}|\le C$. We have assumed that 
\begin{equation} \label{assumptioneta}
    (1-\rho(\eta)){\bf{B}}_{q,d}|E|^{1/p}\le \|\widehat{f}\|_q,
\end{equation}  
and hence by Proposition \ref{basicallyanellipse} we know that
\[ |E\setminus\mc{E}|\le \eta|E|,\quad|\mc{E}|\le C_\eta|E| \]
and by Proposition \ref{ellipsoiddecompdual} that
\[ \|\Psi\|_q\le\eta|E|^{1/p},\quad \Phi\prec \tilde{\mc{E}},\quad \|\Phi\|_\infty|\tilde{\mc{E}}|^{1/q}\le C_\eta|E|^{1/p}. \]

Let $S_\a=\{\xi:|\widehat{f}(\xi)|\ge \a|\tilde{\mc{E}}|^{-1/q}\}$ and $\lambda_\eta=\{\xi:|\widehat{f}(\xi)|\le C_\eta |E|^{1/p}{|\tilde{\mc{E}}|}^{-1/q}\}$. We decompose the following integral as
\begin{align}
    \int_{\R^d}|\widehat{f}|^q d\xi&=\int_{\tilde{\mc{E}}^c}|\widehat{f}|^q d\xi+\int_{{\tilde{\mc{E}}}\cap\lambda_\eta^c}|\widehat{f}|^qd\xi \label{l2caplp} +\int_{\tilde{\mc{E}}\cap (\lambda_\eta\cap S_\a)}|\widehat{f}|^q d\xi+\int_{\tilde{\mc{E}}\cap (\lambda_\eta\cap S_\a^c)}|\widehat{f}|^q d\xi \nonumber \\
    &:=A+B+C+D.\nonumber
\end{align}
We will bound each integral defined above. First, we use the properties of the decomposition $\widehat{f}=\Phi+\Psi$ to note that 
\[ A=\int_{\tilde{\mc{E}}^c}|\Psi|^qd\xi\le \|\Psi\|_q^q\le \eta^q|E|^{q/p} .\]
Next, to control $B$, we use the property that $\widehat{f}=\Phi+\Psi$ is  a disjointly supported decomposition and $\|\Phi\|_{\infty}\le C_\eta|E|^{1/p}|\tilde{\mc{E}}|^{-1/q}$, so $\Phi=0$ a.e. on $\lambda_\eta^c$. Namely,  
\[B=\int_{\tilde{\mc{E}}\cap\lambda_\eta^c }|\Psi|^q d\xi \le\eta^q|E|^{q/p}.  \]
For $C$, we use that $|\widehat{f}|\le C_\eta|E|^{1/p}|\tilde{\mc{E}}|^{-1/q}$ on $\lambda_\eta$ to get
\[ C\le C_\eta^q|E|^{q/p}|\tilde{\mc{E}}|^{-1}|\tilde{\mc{E}}\cap S_\a|.  \]
Finally, we have for $D$ that
\[D \le|\tilde{\mc{E}}\cap\lambda_\eta|\a^q|\tilde{\mc{E}}|^{-1}\le\a^q.\]

Combining the upper bounds for $A,B,C,D$ with (\ref{assumptioneta}), we have
\begin{align*}  
(1-\rho(\eta))^q{\bf{B}}_{q,d}^q|E|^{q/p}&\le \int_{\R^d}|f|^qd\xi\\
&= A+B+C+D\\
&\le\eta^q|E|^{q/p}+\eta^q|E|^{q/p}\\
&\quad\quad+ C_\eta^q|E|^{q/p}|\tilde{\mc{E}}|^{-1}|\tilde{\mc{E}}\cap S_\a|+ \a^q
\end{align*}
Rearranging, we get 
\[ C_{\eta}^{-q}[(1-\rho(\eta))^q{\bf{B}}_{q,d}^q-   2\eta^q- \a^q|E|^{-q/p}]|\tilde{\mc{E}}|\le |\tilde{\mc{E}}\cap S_\a| \]
Finally, since $|\mc{E}|=1$ and $|\mc{E}|\le C_\eta|E|$, we have
\[  C_{\eta}^{-q}[(1-o_\eta(1))^q{\bf{B}}_{q,d}^q-   2\eta^q- \a^qC_\eta^{q/p}]|\tilde{\mc{E}}|\le |\tilde{\mc{E}}\cap S_\a|.\]
Choose $\a$ small enough so that 
\[  \frac{1}{2}C_{\eta}^{-q}[(1-o_\eta(1))^q{\bf{B}}_{q,d}^q-   2\eta^q]\le 
C_{\eta}^{-q}[(1-o_\eta(1))^q{\bf{B}}_{q,d}^q-   2\eta^q- \a^qC_\eta^{q/p}],\]
so $\a$ only depends on $\eta$. 
Thus for $c'=c'(\eta)>0$ and $\a=\a(\eta)$, we can conclude that
\[ c'|\tilde{\mc{E}}|\le |\tilde{\mc{E}}\cap S_\a| . \]

Since $|f|\le 1_E$ and $|E|<\infty$, $f$ is in $L^2$. Since $|\mc{E}|=1$, note that $|E|=|E\cap\mc{E}|+|E\setminus\mc{E}|\le 1+\eta|E|$, so we can assume $|E|\le 2$. Using these two observations, we have
\begin{align*}
    2\ge |E|\ge \|f\|_2^2&=\|\widehat{f}\|_2^2\ge \int_{S_\a}|\widehat{f}(\xi)|^2d\xi\ge \a^2|\tilde{\mc{E}}|^{-2/q}|S_\a|\ge \a^2c'|\tilde{\mc{E}}|^{1-2/q}, 
\end{align*} 
so $|\mc{E}||\tilde{\mc{E}}|=|\tilde{\mc{E}}|\le C'$ for $C'=C'(\eta)$.

Now assume via composition with an affine transformation that $\tilde{\mc{E}}=\mb{B}$ and that $\mc{E}=\{x:\sum_{j=1}^ds_j^{-2}x_j^2\le 1\}$. We wish to show that $\mb{B}\subset C\mc{E}^*$ and that $\mc{E}\subset C\mb{B}$, where $C$ is permitted to depend on $\eta$. We know from the earlier discussion that $|\mc{E}||\mb{B}|\le C_{\eta}$. Since $|\mc{E}|=c_d\prod_{j=1}^ds_j$, it remains to show that the smallest $s_i$, say $s_1$, is bounded below. Using the same notation as earlier, we note that
\[ \|\partial_{\xi_1}\widehat{1_{\mc{E}}f}\|_{q}\le2\pi  {\bf{B}}_{q,d}\|x_11_{\mc{E}}f\|_p\le 2\pi {\bf{B}}_{q,d} s_1|E|^{1/p}\]
and that
\begin{align*}
    \|\widehat{1_{\mc{E}}f}\|_{L^q(\mb{B})}&\ge \|\widehat{f}\|_{L^q(\mb{B})}-\|\widehat{1_{E\setminus\mc{E}}f}\|_q \\
    &\ge \|\widehat{f}\|_{L^q(\R^d)}-\|\Psi\|_q-{\bf{B}}_{q,d}|E\setminus\mc{E}|^{1/p} \\
    &\ge (1-\rho(\eta)){\bf{B}}_{q,d}|E|^{1/p}-\eta|E|^{1/p}-{\bf{B}}_{q,d}\eta^{1/p}|E|^{1/p} \\
    &\ge (1-o_\eta(1)){\bf{B}}_{q,d}|E|^{1/p}. 
\end{align*}
We also have 
\[ \||E|^{-1/p}\widehat{1_{\mc{E}}f}\|_q\le {\bf{B}}_{q,d} .\]

Thus we are in the situation where there are functions $h$ satisfying $\|\partial_{\xi_1}h\|_{L^q(\R^d)}\le 2\pi{\bf{B}}_{q,d}s_1(h)$ for a positive quantity $s_1$ associated to each $h$ and $\|h\|_{L^q(\mb{B})}>\frac{1}{2}{\bf{B}}_{q,d}>0$. If there are functions $h=|E|^{-1/p}\widehat{1_{\mc{E}}f}$ fitting the above regime and for which $s_1(h)\to 0$, then $\|h\|_{L^q(\R^d)}\to\infty$. Since we have the uniform upper bound $\||E|^{-1/p}\widehat{1_{\mc{E}}f}\|_q\le {\bf{B}}_{q,d}$, there must be a positive lower bound depending on $\eta$ for the values of $s_1$, which completes the proof. 

\end{proof} 

\end{lemma}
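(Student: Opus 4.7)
The plan is first to normalize by a modulation--translation $f(x)\mapsto e^{-2\pi i v\cdot x}f(x-u)$ and a dilation so that $u=v=0$ and $|\mc{E}|=1$; the hypotheses $|E\setminus\mc{E}|\le\eta|E|$ and $|\mc{E}|\le C_\eta|E|$ then pinch $|E|$ between positive constants depending only on $\eta$. I would then establish the lemma in two movements: a volume bound $|\mc{E}||\tilde{\mc{E}}|\le C$, and a lower bound on the smallest semiaxis of $\mc{E}$ in the basis where $\tilde{\mc{E}}$ is the Euclidean ball.

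For the volume bound, the strategy is to lower bound the Lebesgue measure of the superlevel set $S_\a=\{\xi:|\widehat{f}(\xi)|\ge\a|\tilde{\mc{E}}|^{-1/q}\}$ by a positive multiple of $|\tilde{\mc{E}}|$, for an appropriate small $\a=\a(\eta)$. Decomposing $\widehat{f}=\Phi+\Psi$ as in Proposition \ref{ellipsoiddecompdual}, I would partition $\R^d$ into $\tilde{\mc{E}}^c$ (where $\widehat{f}=\Psi$ has small $L^q$ norm), the subset of $\tilde{\mc{E}}$ on which $|\widehat{f}|$ exceeds the $L^\infty$ bound for $\Phi$ (which by disjointness of supports forces $\Phi=0$ there, so again $\widehat{f}=\Psi$), and the remainder of $\tilde{\mc{E}}$ further split by the threshold $\a|\tilde{\mc{E}}|^{-1/q}$. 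Matching the resulting upper bound for $\|\widehat{f}\|_q^q$ against the near-extremal lower bound $(1-\rho(\eta))^q{\bf{B}}_{q,d}^q|E|^{q/p}$ yields $|S_\a|\ge c'|\tilde{\mc{E}}|$, provided $\rho(\eta)$ decays fast enough. Plancherel then closes the loop: $\|f\|_2^2=\|\widehat{f}\|_2^2\ge \a^2|\tilde{\mc{E}}|^{-2/q}|S_\a|\gtrsim |\tilde{\mc{E}}|^{1-2/q}$, while $\|f\|_2^2\le|E|$ is bounded, so $|\tilde{\mc{E}}|\le C$.

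Next I would diagonalize by an affine change of variables so that $\tilde{\mc{E}}=\mathbb{B}$ and $\mc{E}=\{x:\sum_j s_j^{-2}x_j^2\le 1\}$ with $s_1\le\cdots\le s_d$. In this basis both desired inclusions reduce to $\max_j s_j\le C$; combined with the identity $\prod_j s_j=c_d^{-1}$ forced by $|\mc{E}|=1$, it suffices to show that the smallest semiaxis $s_1$ is bounded below. Here I would use the Bernstein-type estimate
\[ \|\partial_{\xi_1}\widehat{1_{\mc{E}}f}\|_q \le 2\pi\|x_1\cdot 1_{\mc{E}}f\|_p{\bf{B}}_{q,d}\le 2\pi{\bf{B}}_{q,d}\,s_1|E|^{1/p}, \]
together with the lower bound $\|\widehat{1_{\mc{E}}f}\|_{L^q(\mathbb{B})}\ge (1-o_\eta(1)){\bf{B}}_{q,d}|E|^{1/p}$, which follows from near-extremality after subtracting the contributions of $E\setminus\mc{E}$ and $\Psi$, and the uniform upper bound $\|\widehat{1_{\mc{E}}f}\|_q\le{\bf{B}}_{q,d}|E|^{1/p}$ from Hausdorff--Young. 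If $s_1$ were arbitrarily small, the gradient control would propagate the substantial $L^q$ mass on $\mathbb{B}$ across many translates of $\mathbb{B}$ in the $\xi_1$ direction, forcing $\|\widehat{1_{\mc{E}}f}\|_{L^q(\R^d)}$ to exceed its uniform upper bound.

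The main obstacle I expect is the quantitative implementation of this last propagation step: converting a small $L^q$-gradient bound in a single direction into a genuine contradiction with the fixed global $L^q$ upper bound, with constants depending only on $\eta$, $d$, and $\Lambda$. This could be achieved either by a Poincar\'e/Sobolev comparison in $L^q$ applied on thin slabs or by an explicit comparison of $\widehat{1_{\mc{E}}f}$ on nearby translates of $\mathbb{B}$ via integrating the $\partial_{\xi_1}$ bound along $\xi_1$-lines. Once a uniform lower bound on $s_1$ is in hand, the upper bound $\max_j s_j\le C$ follows from the fixed product $\prod_j s_j=c_d^{-1}$, and both polar inclusions $\mc{E}\subset C\tilde{\mc{E}}^*$ and $\tilde{\mc{E}}\subset C\mc{E}^*$ are immediate.
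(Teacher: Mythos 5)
Your proposal follows essentially the same route as the paper's own proof: the same four-way decomposition of $\int|\widehat{f}|^q$ using $S_\a$, the disjointness of $\Phi,\Psi$, and the $L^\infty$ bound on $\Phi$ to get $|\tilde{\mc{E}}\cap S_\a|\ge c'|\tilde{\mc{E}}|$, Plancherel to bound $|\mc{E}||\tilde{\mc{E}}|$, and then the identical Bernstein-type estimate $\|\partial_{\xi_1}\widehat{1_{\mc{E}}f}\|_q\lesssim s_1|E|^{1/p}$ played against the lower bound on $\|\widehat{1_{\mc{E}}f}\|_{L^q(\B)}$ and the uniform upper bound to force $s_1$ bounded below. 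The ``propagation'' step you flag as the main obstacle is left at the same level of detail in the paper, so your sketch matches its argument in substance.
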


\end{subsection}

\section{Precompactness\label{precompactnesssect}} 

We restate Theorem \ref{precompactness} for the reader's convenience. 

\noindent{{\bf{Theorem 1.1}} \emph{ Suppose that the claim in Question \ref{dual near-ext=slice} holds. Let $d\ge1$ and $q\in(2,\infty)$, $p=q'$. Let $(E_\nu)$ be a sequence of Lebesgue measurable subsets of $\R^d$ with $|E_\nu|\in\R^+$ and let $f_\nu$ be Lebesgue measurable functions on $\R^d$ satisfying $|f_\nu|\le 1_{E_\nu}$. Suppose that $\lim_{\nu\to\infty}|E_\nu|^{-1/p}\|\widehat{f_\nu}\|_{q}={\bf{B}}_{q,d}$. Then there exists a subsequence of indices $\nu_k$, a Lebesgue measurable set $E\subset\R^d$ with $0<|E|<\infty$, a Lebesgue measurable function $f$ on $\R^d$ satisfying $|f|\le 1_E$, a sequence  $(T_\nu)$ of affine automorphisms of $\R^d$, and a sequence of vectors $v_\nu\in\R^d$ such that  
\[ \lim_{k\to\infty}\|e^{-2\pi i v_{\nu_k}\cdot x}f_{\nu_k}\circ T_{\nu_k}^{-1}-f\|_p=0\quad\text{and}\quad \lim_{k\to\infty}|T_{\nu_k}(E_{\nu_k})\Delta E|=0. \]
}}

In order to prove Theorem \ref{precompactness}, we first prove the following lemma. 
\begin{lemma}\label{precompactnessdual} Suppose that the claim in Question \ref{dual near-ext=slice} holds. Let $d\ge1$ and $q\in(2,\infty)$, $p=q'$. Let $(E_\nu)$ be a sequence of Lebesgue measurable subsets of $\R^d$ with $|E_\nu|\in\R^+$. Let $f_\nu$ be Lebesgue measurable functions satisfying $|f_\nu|\le 1_{E_\nu}$. Suppose that $\lim_{\nu\to\infty}|E_\nu|^{-1/p}\|\widehat{f_\nu}\|_{q}={\bf{B}}_{q,d}$. Then there exists a sequence of elements $T_\nu\in\text{Aff}(d)$ and vectors $v_\nu\in\R^d$ such that $|T_\nu(E_\nu)|$ is uniformly bounded and the sequence of functions  $(\widehat{g_\nu})$ where $g_\nu=e^{-2\pi iv_\nu\cdot x}f_\nu\circ T_\nu^{-1}$ is precompact in $L^q(\R^d)$. 
\end{lemma}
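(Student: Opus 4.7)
The plan is to normalize $(f_\nu, E_\nu)$ via an affine transformation $T_\nu$ and a modulation by $v_\nu$, chosen so that the resulting sequence $g_\nu(x) = e^{-2\pi i v_\nu \cdot x} f_\nu \circ T_\nu^{-1}(x)$ is tightly localized in both physical and frequency space, and then apply the Riesz--Fr\'echet--Kolmogorov compactness criterion to extract a subsequence of $(\widehat{g_\nu})$ convergent in $L^q(\R^d)$.

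For the normalization step, I would pick $\eta_\nu \to 0$ slowly enough that $\delta_\nu := 1 - \|\widehat{f_\nu}\|_q / (\mathbf{B}_{q,d}|E_\nu|^{1/p}) \le \rho(\eta_\nu)$ in the sense of Lemma \ref{Domcompat}. Propositions \ref{basicallyanellipse} and \ref{basicallyanellipsedual} with $\epsilon = \eta_\nu$ then yield a physical ellipsoid $\mathcal{E}_\nu + u_\nu$ capturing a $(1-\eta_\nu)$-fraction of $E_\nu$ and a frequency ellipsoid $\tilde{\mathcal{E}}_\nu + w_\nu$ with an associated decomposition of $\widehat{f_\nu}$. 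I would take $T_\nu$ to be the affine map whose linear part sends $\mathcal{E}_\nu$ to the unit ball $\mathbf{B}$ and whose translation sends $u_\nu$ to the origin, and choose $v_\nu$ so that $e^{-2\pi i v_\nu\cdot x}$ centers the transformed frequency ellipsoid at the origin. Since such transformations preserve the ratio $\|\widehat{\cdot}\|_q / |\cdot|^{1/p}$, the normalized $g_\nu$ remains $\delta_\nu$-near extremizing; $|T_\nu(E_\nu)|$ is uniformly comparable to $|\mathbf{B}|$; $|T_\nu(E_\nu) \setminus \mathbf{B}| \le \eta_\nu |T_\nu(E_\nu)| \to 0$; and by Lemma \ref{Domcompat} (applied using $\mathbf{B}^* = \mathbf{B}$), the centered frequency ellipsoid for $\widehat{g_\nu}$ sits inside $C_{\eta_\nu}\mathbf{B}$.

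Next, I would verify the three Riesz--Fr\'echet--Kolmogorov conditions for $(\widehat{g_\nu}) \subset L^q(\R^d)$. Uniform boundedness in $L^q$ is immediate from Hausdorff--Young and the uniform bound on $|T_\nu(E_\nu)|$. Equi-continuity will follow from the Hausdorff--Young estimate $\|\widehat{g_\nu}(\,\cdot\,+h)-\widehat{g_\nu}\|_q \le \mathbf{C}_p\|(e^{-2\pi i h\cdot x}-1)g_\nu\|_p$: splitting $g_\nu = g_\nu 1_\mathbf{B} + g_\nu 1_{\mathbf{B}^c}$ bounds the right-hand side by $2\pi|h||\mathbf{B}|^{1/p} + 2\eta_\nu^{1/p}|T_\nu(E_\nu)|^{1/p}$, which is small as $|h|\to 0$ and $\nu\to\infty$ (with finitely many small $\nu$ handled by continuity of translation in $L^q$). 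For tightness, I would apply Proposition \ref{basicallyanellipsedual} afresh to $g_\nu$ with a small auxiliary parameter $\eta^* > 0$; for $\nu$ large enough that $\eta_\nu \le \eta^*$, the unit ball $\mathbf{B}$ itself serves as a valid physical ellipsoid at level $\eta^*$ for $g_\nu$, so Lemma \ref{Domcompat} forces the shape of the corresponding frequency ellipsoid into $C_{\eta^*}\mathbf{B}$. Arranging that this ellipsoid is also centered (by comparison with the already centered $\eta_\nu$-decomposition), one obtains $\int_{|\xi|>C_{\eta^*}}|\widehat{g_\nu}|^q \le c(\eta^*)^q$ uniformly for $\nu$ large; sending $\eta^* \to 0$ along a sequence then yields uniform tightness.

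The hardest part will be this tightness step: Lemma \ref{Domcompat} controls only the shape of the frequency ellipsoid at each auxiliary level $\eta^*$, not its translate, so some additional argument is required to ensure its center can be taken uniformly bounded. I expect this can be handled either by observing that the already centered $\eta_\nu$-level decomposition serves, with a small enlargement, as a centered decomposition at the coarser level $\eta^*$, or by a direct geometric argument exploiting that the bulk of $\widehat{g_\nu}$ is concentrated near the origin after the modulation by $v_\nu$. Once uniform tightness is in place, Riesz--Fr\'echet--Kolmogorov delivers a subsequence of $(\widehat{g_\nu})$ convergent in $L^q(\R^d)$, proving the lemma.
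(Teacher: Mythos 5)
Your overall plan (normalize with the ellipsoids from Propositions \ref{basicallyanellipse} and \ref{basicallyanellipsedual}, invoke Lemma \ref{Domcompat}, then run a compactness criterion on $(\widehat{g_\nu})$) matches the paper's strategy, and the uniform $L^q$ bound and equicontinuity steps are fine. But the step you yourself flag as ``the hardest part'' --- uniform frequency tightness --- is a genuine gap, and it is exactly where the paper's proof does its real work. The paper fixes the normalization level at a constant $\epsilon_0$ (not a vanishing $\eta_\nu$), normalizes so that the level-$\epsilon_0$ spatial ellipsoid is $\B$ and the level-$\epsilon_0$ frequency ellipsoid is centered at $0$ (hence, by Lemma \ref{Domcompat}, contained in $C_0\B$), and then \emph{proves} that for every smaller $\epsilon$ the level-$\epsilon$ objects are automatically compatible with this single normalization: from $|\B|\le C_0|E_\nu|$ and the measure bounds one gets $|\B\cap\mc{E}_{\nu,\epsilon}|\ge c$, so the $\mc{E}_{\nu,\epsilon}$ sit in an $\epsilon$-dependent ball; and since $\Phi_\nu$ and $\Phi_{\nu,\epsilon}$ are both restrictions of $\widehat{f_\nu}$ capturing most of its $L^q$ norm, one gets $|\mc{F}_{\nu,\epsilon}\cap C_0\B|\ge c$ together with the lower bound $\|\Phi_{\nu,\epsilon}\|_\infty\ge c\|\widehat{f_\nu}\|_q$, hence $|\mc{F}_{\nu,\epsilon}|\le C_\epsilon$ and finally $\mc{F}_{\nu,\epsilon}\subset C_\epsilon\B$ uniformly in $\nu$. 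That chain of estimates, which supplies both the size and the centering of the auxiliary frequency ellipsoids, is absent from your proposal.

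Moreover, the two fixes you sketch do not go through as stated, precisely because you normalize at a level $\eta_\nu\to 0$: the constants $C_{\eta_\nu}$ in Propositions \ref{basicallyanellipse}, \ref{basicallyanellipsedual} and in Lemma \ref{Domcompat} may blow up as $\eta_\nu\to0$. Thus (i) the containment of the level-$\eta_\nu$ frequency ellipsoid in $C_{\eta_\nu}\B$ gives no uniform localization; (ii) after mapping $\mc{E}_\nu$ to $\B$ you only know $|T_\nu(E_\nu)|\ge |\B|/C_{\eta_\nu}$, a degenerating lower bound, so your claim that $\B$ ``serves as a valid physical ellipsoid at level $\eta^*$'' is unjustified, since the requirement $|\B|\le C_{\eta^*}|T_\nu(E_\nu)|$ may fail (this also undercuts the asserted two-sided comparability of $|T_\nu(E_\nu)|$ with $|\B|$, though only the upper bound is needed for the lemma); and (iii) the level-$\eta_\nu$ dual decomposition obeys its $\|\Phi\|_\infty|\mc{F}|^{1/p}$ bound only with constant $C_{\eta_\nu}$, not $C_{\eta^*}$, so it is not ``with a small enlargement'' a level-$\eta^*$ decomposition, and its center cannot simply be inherited. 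The remedy is the paper's: normalize once at a fixed $\epsilon_0$ and then derive the uniform containment $\mc{F}_{\nu,\epsilon}\subset C_\epsilon\B$ (and the analogous control of $\mc{E}_{\nu,\epsilon}$) by the overlap-of-mass argument sketched above; with that in hand, your Riesz--Fr\'echet--Kolmogorov ending is an acceptable substitute for the paper's Rellich argument.
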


\begin{proof}[Proof of Lemma \ref{precompactnessdual}] Let $f_\nu$ and $E_\nu$ satisfy the hypotheses. Let $\epsilon_0=\min(\frac{1}{4},\eta_0)$ where $\eta_0$ is the threshold from Lemma \ref{Domcompat}. For each sufficiently large $\nu$, (1) there exists an ellipsoid $\mc{E}_{\nu}$ satisfying the conclusions of Proposition \ref{basicallyanellipse} with $\epsilon=\epsilon_0$ and (2) there exists an ellipsoid $\mc{F}_{\nu}$ and disjointly supported decomposition $f_\nu=\Phi_{\nu}+\Psi_{\nu}$ satisfying the conclusions of Proposition \ref{basicallyanellipsedual} with $\epsilon=\epsilon_0$. 

Let $u_{\nu},v_\nu\in\R^d$ be the centers of the $\mc{E}_\nu$ and $\mc{F}_\nu$ respectively. By replacing $f_\nu$ by $e^{-2\pi iv_\nu\cdot x}f_\nu(x+u_\nu)$ and $1_{E_\nu}$ by $1_{E_\nu-u_n}$, we may reduce to the case $u_\nu=v_\nu=0$. By composing $f_n$ and $1_{E_\nu}$ with an element of the general linear group on $\R^d$, we may reduce to the case in which $\mc{E}_n$ is the unit ball $\B$ of $\R^d$. Continue to denote these modified functions by $f_\nu$ and $1_{E_\nu}$.

For each $\epsilon>0$, there exists $N<\infty$ such that for each $\nu\ge N$, Proposition \ref{basicallyanellipse} associates to $(f_\nu,E_\nu)$ an ellipsoid $\mc{E}_{\nu,\epsilon}$ 
and Proposition \ref{basicallyanellipsedual} associates to $(f_\nu,E_\nu)$ an ellipsoid $\mc{F}_{\nu,\epsilon}$ and a disjointly supported decomposition $\widehat{f_\nu}=\Phi_{\nu,\epsilon}+\Psi_{\nu,\epsilon}$.

Symmetries of the inequality have been exploited to normalize so that $\mc{E}_\nu=\B$, so by Lemma \ref{Domcompat}, $\mc{F}_n$ are balls centered at the origin with radii comparable to $1$. We claim that this ensures corresponding normalizations for $\mc{E}_{\nu,\epsilon},\mc{F}_{\nu,\epsilon}$; $\epsilon$--dependent symmetries
are not needed. 

According to Proposition \ref{basicallyanellipse}, 
\begin{align*}
|E_\nu\setminus\B|&\le \epsilon_0|E_\nu|\quad\text{and}\quad |\B|\le C_0|E_\nu|\\
|E_\nu\setminus\mc{E}_{\nu,\epsilon}|&\le \epsilon|E_\nu|\quad\text{and}\quad |\mc{E}_{\nu,\epsilon}|\le C_\epsilon|E_\nu|,     
\end{align*}
provided that $\nu$ is sufficiently large and $\epsilon$ is sufficiently small. This implies that
\begin{align*}
    \frac{3}{4}C_0^{-1}|\B|\le (1-\epsilon_0)|E_\nu|&\le  |\B\cap E_\nu|=|(\B\cap E_\nu)\setminus \mc{E}_{\nu,\epsilon}|+|\B\cap E_\nu\cap \mc{E}_{\nu,\epsilon}|\\
    &\le \epsilon |E_\nu|+|\B\cap \mc{E}_{\nu,\epsilon}|\le \epsilon \frac{4}{3}|\B|+|\B\cap \mc{E}_{\nu,\epsilon}| ,
\end{align*}
so there is a $c>0$ such that $|\B\cap \mc{E}_{\nu,\epsilon}|\ge c$ where $c$ is independent of $\epsilon$ and $\nu$. This lower bound combined with the upper bound $|\mc{E}_{\nu,\epsilon}|\le C_\epsilon |E_\nu|\le C_\epsilon\frac{4}{3}|\B|$ implies the $\mc{E}_{\nu,\epsilon}$ are contained in a ball centered at $0$ with radius depending only on $\epsilon$.

By Proposition \ref{basicallyanellipsedual}, for sufficiently large $\nu$ and sufficiently small $\epsilon$,
\begin{align*}
\|\Phi_\nu-\widehat{f_\nu}\|_q\le \epsilon_0\|\widehat{f_\nu}\|_q\quad\text{and}\quad \|\Phi_{\nu,\epsilon}-\widehat{f_\nu}\|_q\le\epsilon\|\widehat{f_\nu}\|_q,\\ 
\|\Phi_\nu\|_\infty\le C_0\|f_\nu\|_p \quad\text{and}\quad\|\Phi_{\nu,\epsilon}\|_\infty|\mc{E}_{\nu,\epsilon}|^{1/q}\le C_\epsilon\|f_\nu\|_p .
\end{align*}

For each $\xi\in\R^d$, each of $\Phi_\nu(\xi),\Phi_{\nu,\epsilon}(\xi)$ is equal either to $\widehat{f}_\nu(\xi)$, or to $0$. From these inequalities and this fact, along with the support relations $\Phi_\nu\prec\B$
and $\Phi_{\nu,\epsilon}\prec \mc{F}_{\nu,\epsilon}$, it follows that 
\begin{align*}
    \|\Phi_{\nu,\epsilon}\|_{q}&=\|1_{\mc{F}_{\nu,\epsilon}}\widehat{f_\nu}\|_q\ge (1-\epsilon)\|\widehat{f_\nu}\|_q \\
    \|1_{\mc{F}_{\nu,\epsilon}\setminus C_0\B}\widehat{f_\nu}\|_q&\le \|\Phi_{\nu}-\Phi_{\nu,\epsilon}\|_q\le (\epsilon_0+\epsilon)\|\widehat{f_\nu}\|_q 
\end{align*}
where $\mc{F}_{\nu}\subset C_0\B$. Thus $\|1_{\mc{F}_{\nu,\epsilon}\cap C_0\B}\widehat{f_\nu}\|_q\ge (1-\epsilon_0-2\epsilon)\|\widehat{f_\nu}\|_q$. Combined with the inequalities 
\begin{align*} 
\|\widehat{f_\nu}\|_q \ge \frac{1}{2}{\bf{B}}_{q,d}|E_\nu|^{1/p}&\ge \frac{1}{2}{\bf{B}}_{q,d}C_0^{-1/p}|\B|^{1/p} \quad\text{and}\\
 \|1_{\mc{F}_{\nu,\epsilon}\cap C_0\B}\widehat{f_\nu}\|_q\le  |\mc{F}_{\nu,\epsilon}\cap C_0\B|^{1/q}\|\widehat{f_\nu}\|_\infty&\le |\mc{F}_{\nu,\epsilon}\cap C_0\B|^{1/q}|E_\nu|\le  |\mc{F}_{\nu,\epsilon}\cap C_0\B|^{1/q}\frac{4}{3}|\B|, 
\end{align*}
we conclude $|\mc{F}_{\nu,\epsilon}\cap C_0\B|\ge c$ where $c>0$ is independent of $\nu,\epsilon$.

Another consequence of the inequalities from Proposition \ref{basicallyanellipsedual} is that $\|\Phi_{\nu,\epsilon}\|_\infty\ge c\|\Phi_{\nu}\|_\infty$, where $c>0$ is independent of $\nu,\epsilon$. Indeed, 
\begin{align*}
(1-\epsilon_0)\|\widehat{f_\nu}\|_q&\le \|\Phi_\nu\|_{q}=\|\Phi_\nu\|_{L^q(C_0\B)}\le \|\Phi_\nu-\Phi_{\nu,\epsilon}\|_q+\|\Phi_{\nu,\epsilon}\|_{L^q(C_0\B)}\\
    &\le (\epsilon_0+\epsilon)\|\widehat{f_\nu}\|_q+|\Phi_{\nu,\epsilon}\|_\infty|C_0\B|^{1/q}, 
\end{align*}
so for a constant $c>0$ independent of $\nu,\epsilon$, \[ \|\Phi_{\nu,\epsilon}\|_\infty\ge c\|\widehat{f_\nu}\|_q\ge c\frac{1}{2}{\bf{B}}_{q,d}|E_\nu|^{1/p}\ge c\frac{1}{2}{\bf{B}}_{q,d}\|f_\nu\|_p. \]
Then $|\mc{F}_{\nu,\epsilon}|^{1/q} \le C_\epsilon\|f_\nu\|_p \|\Phi_{\nu,\epsilon}\|_\infty^{-1}
\le C_\epsilon \frac{2}{c{\bf{B}}_{q,d}}$. The uniform lower bound on $|\mc{F}_{\nu,\epsilon}\cap C_0\B|$ and the $\epsilon$-dependent upper bound on $|\mc{F}_{\nu,\epsilon}|$ imply that $\mc{F}_{\nu,\epsilon}\subset C_\epsilon \B$ for sufficiently large $\nu$.

Now note the uniform bounds 
\[  \|\widehat{1_{C_0\B}f_\nu}\|_q\le {\bf{B}}_{q,d}|E_\nu|^{1/q}\le {\bf{B}}_{q,d}\frac{4}{3}|\B|^{1/p}   \] 
and, by the Hausdorff-Young inequality,
\[ \|\nabla\widehat{1_{C_0\B }f_\nu}\|_q\le \||x|1_{\mc{E}_{\nu,\epsilon}}\|_p\le C_\epsilon  \]
since $|x|1_{\mc{E}_{\nu,\epsilon}}$ is bounded by the diameter of $\mc{E}_{\nu,\epsilon}$ and the volumes $|\mc{E}_{\nu,\epsilon}|$ are bounded above uniformly in $\nu$. By Rellich's theorem, on any fixed bounded subset of $\R^d$, we can find an $L^q$ convergent subsequence of $(\widehat{1_{\mc{E}_{\nu,\epsilon}}f_\nu })$. Since this is true for each $\epsilon$, $\|\widehat{f_\nu}\|_q$ is bounded uniformly above, and $\|\vwidehat{1_{\R^d\setminus\mc{E}_{\nu,\epsilon}}f_\nu}\|_q\to 0$ as $\epsilon\to 0$, it follows that the sequence $(\widehat{f_\nu})$ is precompact in $L^q(\R^d)$ on any fixed bounded subset of $\R^d$. Since $\mc{F}_{\nu,\epsilon}$ is contained
in a ball independent of $\nu$ for each fixed $\epsilon$, and since 
$\int_{\xi\notin\mc{F}_{\nu,\epsilon}}|\widehat{f_\nu}(\xi)|^q\,d\xi\to 0$ as $\epsilon\to0$,
the sequence $(\widehat{f_\nu})$ is precompact in $L^q(\R^d)$.

\end{proof}

\begin{proof}[Proof of Theorem \ref{precompactness}] 

From Lemma \ref{precompactnessdual}, we can assume that the sequence $(\widehat{f_{\nu}})$ is convergent in $L^q(\R^d)$ and that the supports $E_\nu$ satisfy $|E_\nu|\le \frac{4}{3}|\B|$. By passing to a subsequence, we may also assume that $\lim_{\nu\to\infty}|E_{\nu}|=a$ where $0<a<\infty$ and by precomposing $f_\nu$ and $1_{E_\nu}$ by affine transformations, we may assume that $|E_\nu|=1$ for all $\nu$. Then
\[ \lim_{\nu\to\infty}\|\widehat{f_\nu}\|_q= {\bf{B}}_{q,d}. \]

Since $\|f_\nu\|_2\le |E_\nu|^{1/2}=1$ for all $\nu$, by the Banach-Alaoglu theorem, there is a weak-* convergent subsequence (which we just denote $(f_\nu)$) to a limit $f\in L^2$. Note that since weak-* convergence of $(f_\nu)$ to $f$ implies convergence as tempered distributions, it must be that $(\widehat{f_\nu})$ converge to $\widehat{f}$ as tempered distributions. Since $(\widehat{f_\nu})$ is a convergent sequence in $L^q$, it must therefore be true that $\widehat{f_\nu}\to \widehat{f}$ strongly in $L^q$.  


We claim that 
\[ \lim_{\nu,\mu\to\infty}\tfrac{1}{2}\|f_\nu+f_\mu\|_{\mc{L}}=1. \]
Indeed, $\|f_\nu+f_\mu\|_{\mc{L}}\le |E_\nu|^{1/p}+|E_\mu|^{1/p}=2$, so 
\[ {\bf{B}}_{q,d}=\lim_{\nu,\mu\to\infty}\|\tfrac{1}{2}\widehat{f_\nu}+\tfrac{1}{2}\widehat{f_\mu}\|_q\le \lim_{\nu,\mu\to\infty} \frac{\|\tfrac{1}{2}\widehat{f_\nu}+\tfrac{1}{2}\widehat{f_\mu}\|_q}{\|\tfrac{1}{2}f_\nu+\tfrac{1}{2}f_\mu\|_{\mc{L}}}\le{\bf{B}}_{q,d} \]
where we used Proposition \ref{Lorentz} in the final inequality. Also observe that 
\[ \lim_{\nu,\mu\to\infty}\|\tfrac{1}{2}f_\nu+\tfrac{1}{2}f_\mu\|_{\mc{L}}\le \|\tfrac{1}{2}1_{E_\nu}+\tfrac{1}{2}1_{E_\mu}\|_{\mc{L}}\le 1,    \]
so $\lim\limits_{\nu,\mu\to\infty}\|\frac{1}{2}1_{E_\nu}+\frac{1}{2}1_{E_\mu}\|_{\mc{L}}= 1$. By Lemma \ref{itcrowd}, since $\frac{1}{2}1_{E_\nu}+\frac{1}{2}1_{E_\mu}=1_{E_\nu\cap E_\mu}+\frac{1}{2}1_{E_\nu\Delta E_\mu}$, 
\[ \|\tfrac{1}{2}1_{E_\nu}+\tfrac{1}{2}1_{E_\mu}\|_{\mc{L}}=\tfrac{1}{2}|E_\nu\cap E_\mu|^{1/p}+\tfrac{1}{2}|E_\nu\cup E_\mu|^{1/2}.\]
Let $\delta_{\nu,\mu}>0$ be defined by $|E_\nu\cap E_\mu|=1-\delta_{\nu\mu} $, so $|E_\nu\cup E_\mu|=1+\delta_{\nu\mu}$. Since there exists $c>0$ so $(1-\delta)^{1/p}+(1+\delta)^{1/p}\le 2-c\delta^2$ for $|\delta|\le 1$, conclude that  $\lim\limits_{\nu,\mu\to\infty}|E_\nu\cap E_\mu|=1$. It follows that $|E_\nu\Delta E_\mu|\to 0$, so there exists a Lebesgue measurable set $E\subset \R^d$ such that $|E_\nu\Delta E|\to0$.

Note that for each $0<\eta<1$,
\begin{align*}
    \tfrac{1}{2}\|f_\nu+f_\mu\|_{\mc{L}}&\le \|1_{\{\tfrac{1}{2}|f_\nu+f_\mu|>1-\eta\}}+(1-\eta)1_{\{0<\tfrac{1}{2}|f_\nu+f_\mu|\le 1-\eta\}}\|_{\mc{L}} \\
    &= \eta|\{\tfrac{1}{2}|f_\nu+f_\mu|> 1-\eta\}|^{1/p}+(1-\eta)|\{0<\tfrac{1}{2}|f_\nu+f_\mu|\}|^{1/p}\\
    &\le  \eta|\{\tfrac{1}{2}|f_\nu+f_\mu|> 1-\eta\}|^{1/p}+(1-\eta)|E_\nu\cup E_\mu|^{1/p}.
\end{align*}
Since $\lim\limits_{\nu,\mu\to\infty}|E_\nu\cup E_\mu|=\lim\limits_{\nu,\mu\to\infty}\tfrac{1}{2}\|f_\nu+f_\mu\|_{\mc{L}}=1$, we conclude that 
\[\lim_{\nu,\mu\to\infty}|\{x:\tfrac{1}{2}|f_\nu(x)+f_\mu(x)|> 1-\eta\}|=1.\]
It follows that $\lim\limits_{\nu,\mu\to\infty}\|f_\nu+f_\mu\|_2=2$, and so $\lim\limits_{\nu,\mu\to\infty}\|f_\nu-f_\mu\|_p=0$ since $\|f_\nu-f_\mu\|_p\le 2\|f_\nu-f_\mu\|_2$
and by the parallelogram law,
\[ \|f_\nu-f_\mu\|_2^2+\|f_\nu+f_\mu\|_2^2=2(\|f_\nu\|_2^2+\|f_\mu\|_2^2). \]
Letting $\nu,\mu\to\infty$ gives the result.

\end{proof}

\begin{corollary} \label{precompactnesscor}Suppose that the claim in Question \ref{dual near-ext=slice} holds. Let $d\ge1$ and $q\in(2,\infty)$, $p=q'$. There exist a measurable function $f$ and a measurable subset $E$ of $\R^d$ with $|f|\le 1_E$ such that 
\[ {\bf{B}}_{q,d}=\frac{\|\widehat{f}\|_q}{|E|^{1/p}}=\frac{\|\widehat{f}\|_q}{\|f\|_{\mc{L}}}. \]
\end{corollary}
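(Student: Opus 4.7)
The plan is to deduce the corollary directly from Theorem \ref{precompactness}, using that the ratio $\|\widehat{f}\|_q/|E|^{1/p}$ is invariant under the relevant symmetries and that $\|\cdot\|_{\mc{L}}$ is dominated by $|E|^{1/p}$ on functions majorized by $1_E$. First, I would fix an extremizing sequence $(f_\nu, E_\nu)$ with $|E_\nu|\in\R^+$, $|f_\nu|\le 1_{E_\nu}$, and $|E_\nu|^{-1/p}\|\widehat{f_\nu}\|_q\to {\bf{B}}_{q,d}$. Applying Theorem \ref{precompactness} (using the assumed affirmative answer to Question \ref{dual near-ext=slice}), I get a subsequence (which I keep calling $\nu$), affine automorphisms $T_\nu$, vectors $v_\nu$, and a pair $(f,E)$ with $0<|E|<\infty$ and $|f|\le 1_E$ such that $g_\nu:=e^{-2\pi iv_\nu\cdot x}f_\nu\circ T_\nu^{-1}\to f$ in $L^p$ and $|T_\nu(E_\nu)\Delta E|\to 0$.

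Second, I would note that modulation by a character leaves $|\widehat{g}|$ invariant up to translation, and that for an affine automorphism $T(x)=Ax+b$ one has $\|\widehat{h\circ T^{-1}}\|_q = |\det A|^{1/p}\|\widehat{h}\|_q$ and $|T(E)|^{1/p}=|\det A|^{1/p}|E|^{1/p}$, so the ratio is preserved: $|T_\nu(E_\nu)|^{-1/p}\|\widehat{g_\nu}\|_q = |E_\nu|^{-1/p}\|\widehat{f_\nu}\|_q\to {\bf{B}}_{q,d}$. Then the Hausdorff--Young inequality applied to $g_\nu-f$ yields $\|\widehat{g_\nu}\|_q\to\|\widehat{f}\|_q$, and $|T_\nu(E_\nu)\Delta E|\to 0$ together with $|E|>0$ gives $|T_\nu(E_\nu)|\to|E|$. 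Combining,
\[ \frac{\|\widehat{f}\|_q}{|E|^{1/p}} = \lim_{\nu\to\infty}\frac{\|\widehat{g_\nu}\|_q}{|T_\nu(E_\nu)|^{1/p}} = {\bf{B}}_{q,d}. \]

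Third, I would verify that $|f|\le 1_E$ holds almost everywhere: passing to a further subsequence, $g_\nu\to f$ pointwise a.e.\ and $1_{T_\nu(E_\nu)}\to 1_E$ pointwise a.e., and since $|g_\nu|\le 1_{T_\nu(E_\nu)}$ the bound survives in the limit. This gives the first equality in the corollary. For the Lorentz-norm equality, I would observe from the layer-cake formula (equivalently from the identity $\|f\|_{\mc{L}}=\|f\|_{p1}^*$ cited in the appendix) that $\|f\|_{\mc{L}}\le |E|^{1/p}$ whenever $|f|\le 1_E$ with $|E|<\infty$. Combined with Proposition \ref{Lorentz}, this sandwiches
\[ {\bf{B}}_{q,d}=\frac{\|\widehat{f}\|_q}{|E|^{1/p}}\le \frac{\|\widehat{f}\|_q}{\|f\|_{\mc{L}}}\le {\bf{B}}_{q,d}, \]
forcing equality throughout.

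The argument is essentially bookkeeping once Theorem \ref{precompactness} is in hand; the only step requiring any care is the verification that the symmetries used in the theorem (affine automorphisms and modulations) preserve the functional, and then the passage to the limit in the numerator and denominator. The true difficulty was hidden in Theorem \ref{precompactness} itself (and hence in Question \ref{dual near-ext=slice}), not in this corollary.
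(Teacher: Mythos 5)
Your proposal is correct and follows essentially the same route as the paper: the paper's proof simply invokes (the proof of) Theorem \ref{precompactness} to produce a renormalized extremizing sequence converging in $L^p$ with $|E_\nu\Delta E|\to 0$ and declares the conclusion immediate, while you spell out the bookkeeping it leaves implicit (invariance of the ratio under modulations and affine maps, Hausdorff--Young to pass to the limit in the numerator, and the sandwich $\|f\|_{\mc{L}}\le|E|^{1/p}$ via Proposition \ref{Lorentz} for the second equality). No gaps.
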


\begin{proof}[Proof of Corollary \ref{precompactnesscor}] By the proof of Theorem \ref{precompactness}, there exist a sequence of Lebesgue measurable subsets $E_\nu$ of $\R^d$, functions $f_\nu$ satisfying $|f_\nu|\le 1_{E_\nu}$ and $f,1_E\in L^p(\R^d)$ with $|f|= 1_E$ which satisfy $\lim_{\nu\to\infty}|E_\nu|^{-1/p}\|\widehat{f_\nu}\|_{q}={\bf{B}}_{q,d}$, $\lim_{\nu\to\infty}\|f_\nu-f\|_p=0$, and $\lim_{\nu\to\infty}|E_\nu\Delta E|=0$. It follows immediately that \[ \frac{\|\widehat{f}\|_q}{|E|^{1/p}}=\frac{\|\widehat{f}\|_q}{\|f\|_{\mc{L}}}={\bf{B}}_{q,d}. \]
\end{proof}

\begin{proof}[Proof of Corollary \ref{maincor}] By Lemma \ref{equivnorms}, Lemma \ref{extstruct}, and the inequality $\|g\|_{p1}\le q\|g\|_{p1}^*$ for all $g\in L(p,1)$ that 
\[ \sup_{\substack{g\in L(p,1)\\g\not=0}}\frac{\|\widehat{g}\|_q}{\|g\|_{p1}}\ge \frac{{\bf{B}}_{q,d}}{q}. \]
For the upper bound, we have a similar argument to the proof of Lemma \ref{extstruct}. Let $0\not=g\in L(p,1)$. Let $E=\{(y,s):|g(y)|>s \} $. Let $|g|e^{i\p}=g$ so we can use the layer cake representation
\[    g(x)=e^{i\p(x)}\int_0^\infty1_{E}(x,s)ds   .  \]
Then 
\begin{align*}  
\|\widehat{g}\|_q &= \left\|\left(\int_0^\infty e^{i\p(x)}1_{E}(x,s)ds\right)^{\widehat{\,}}\,\,\right\|_q = \left\| \int_0^\infty \widehat{e^{i\p}1_{E}}(\xi,s)ds\right\|_q  \\ 
&\le \int_0^\infty \|\widehat{e^{i\p}1_E}(\xi,s)\|_q ds \\
&\le \int_0^\infty {\bf{B}}_{q,d}\left|\{x:|g(x)|>s\}\right|^{1/p}ds \\
&=\frac{{\bf{B}}_{q,d}}{q} \int_0^\infty \|1_{\{x:|g(x)|>s\}}\|_{p1}ds  \\
&=\frac{{\bf{B}}_{q,d}}{q} \int_0^\infty\int_0^\infty t^{-1/q-1}\int_0^t1_{\{x:|g(x)|>s\}}^*(u)dudtds \\
&\le \frac{{\bf{B}}_{q,d}}{q}\int_0^\infty t^{-1/q-1}\int_0^tg^*(u)dudt= \frac{{\bf{B}}_{q,d}}{q}\|g\|_{p1} ,
\end{align*} 
so 
\[ \sup_{\substack{g\in L(p,1)\\g\not=0}}\frac{\|\widehat{g}\|_q}{\|g\|_{p1}}= \frac{{\bf{B}}_{q,d}}{q}. \]

Now if $0\not=f\in L(p,1)$ satisfies $\frac{\|\widehat{f}\|_q}{\|f\|_{p1}}=\frac{{\bf{B}}_{q,d}}{q}$, then by repeating the previous analysis, the above inequalities are equalities. Equality in the Minkowski integral inequality implies that for a.e. $(\xi,s)\in \R^d\times\R^+$,
\[ \widehat{e^{i\p}1_E}(\xi,s)=h(\xi)g(s)\]
for some measurable functions $h,g$. 
Since $e^{i\p}1_E(x,t)\in L^{2}$, in particular, $h$ and $\widecheck{h}$ in $L^2$. 
\[ 1_E(x,s)=e^{-i\p(x)}\widecheck{h}(x)g(s). \]
But then for every $(x,s)$ satisfying $|f(x)|>s$, we have 
\[ e^{-i\p(x)}\widecheck{h}(x)g(t)=1. \]
Suppose $|f(x)|>|f(y)|>0$. Then for all $0\le s<f(y)$, 
\[ e^{-i\p(x)}\widecheck{h}(x)=g(s)^{-1}=e^{i\p(y)}\widecheck{h(y)} ,\]
which is a contradiction unless $|f(x)|$ is constant on its support. Thus $f$ takes the form $ae^{i\p}1_S$ where $S\subset\R^d$ is a Lebesgue measurable subset and $a\in \R^+$.

For the existence of such an extremizer, by the proof of Theorem \ref{precompactness}, there exist a sequence of Lebesgue measurable subsets $E_\nu$ of $\R^d$, functions $f_\nu$ satisfying $|f_\nu|\le 1_{E_\nu}$, and $f,1_E\in L^p(\R^d)$ with $|f|= 1_E$ which satisfy $\lim_{\nu\to\infty}|E_\nu|^{-1/p}\|\widehat{f_\nu}\|_{q}={\bf{B}}_{q,d}$, $\lim_{\nu\to\infty}\|f_\nu-f\|_p=0$, and $\lim_{\nu\to\infty}|E_\nu\Delta E|=0$. Thus there exists $f\in L(p,1)$ satisfying
\[ \frac{\|\widehat{f}\|_q}{q|E|^{1/p}}=\frac{\|\widehat{f}\|_q}{\|f\|_{p1}}=\frac{{\bf{B}}_{q,d}}{q}. \]

\end{proof}

\section{Appendix}

\subsection{The Lorentz space $L(p,1)$}

We relate the three quasinorms on $L(p,1)$ defined in \textsection{\ref{Lorentzdiscussion}}. In the following lemma, we prove a formula for $\|s\|_{\mc{L}}$ where $s$ is a nonnegative simple function. 

\begin{lemma}\label{itcrowd} Let $d\ge 1$. Let $s=\sum_{n=1}^Na_n1_{A_n}$ where the $A_n$ are pairwise disjoint and of finite Lebesgue measure and $0<a_1<\cdots< a_N$. Let $a_0=0$ and let $B_n=\cup_{k=n}^NA_k$ for $n=1,\ldots,N$. Then 
\[ \|s\|_{\mc{L}}=\sum_{n=1}^N(a_n-a_{n-1})|B_n|^{1/p}. \]
\end{lemma}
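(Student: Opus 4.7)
The plan is to establish matching upper and lower bounds by a duality argument. For the upper bound $\|s\|_{\mc{L}}\le\sum_{n=1}^N(a_n-a_{n-1})|B_n|^{1/p}$, I would simply exhibit the layer-cake decomposition $s=\sum_{n=1}^N(a_n-a_{n-1})1_{B_n}$: at any $x\in A_k$ one has $x\in B_n$ iff $n\le k$, so the right-hand side telescopes to $a_k=s(x)$. Rewriting this as $\sum_n[(a_n-a_{n-1})|B_n|^{1/p}]\,|B_n|^{-1/p}1_{B_n}$ produces a representation of the form required by the definition of $\|\cdot\|_{\mc{L}}$, yielding the upper bound immediately.

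For the reverse inequality, the strategy is to produce a single nonnegative test function $g$ satisfying the dual-type bound $|E|^{-1/p}\int_E g\le 1$ for every measurable $E$ of finite measure, such that $\int sg$ already equals the target value $\sum_n(a_n-a_{n-1})|B_n|^{1/p}$. Granted such a $g$, for any admissible representation $|s|=\sum_m c_m|E_m|^{-1/p}1_{E_m}$ one has
\[ \int sg=\sum_m c_m|E_m|^{-1/p}\int_{E_m} g\le \sum_m c_m,\]
and taking the infimum over representations delivers the lower bound. Reverse-engineering the desired identity via Abel summation suggests the choice $g=\sum_{n=1}^N\lambda_n 1_{A_n}$ with $\lambda_n:=(|B_n|^{1/p}-|B_{n+1}|^{1/p})/|A_n|$ (with the conventions $|B_{N+1}|=0$, and $\lambda_n=0$ when $|A_n|=0$). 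A brief computation then confirms $\int sg=\sum_n a_n\lambda_n|A_n|=\sum_n(a_n-a_{n-1})|B_n|^{1/p}$, as required.

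The main obstacle is verifying the uniform bound $|E|^{-1/p}\int_E g\le 1$. Since $g$ is supported on $\bigcup_n A_n$, setting $\beta_n=|E\cap A_n|\in[0,|A_n|]$ reduces the claim to the inequality
\[ \sum_{n=1}^N\lambda_n\beta_n\le\Big(\sum_{n=1}^N \beta_n\Big)^{1/p}.\]
I would handle this by recognizing each $\lambda_n$ as the secant slope of the concave function $f(t)=t^{1/p}$ on the interval $[|B_{n+1}|,|B_n|]$. Because $f$ is concave on $(0,\infty)$ for $p>1$, these slopes are nondecreasing in $n$, and so a greedy packing argument (fill $\beta_N,\beta_{N-1},\ldots$ to capacity in turn) shows that the supremum of the left-hand side subject to $\sum_n\beta_n=T$ equals $|B_{k+1}|^{1/p}+\lambda_k(T-|B_{k+1}|)$ for the index $k$ with $T\in(|B_{k+1}|,|B_k|]$. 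This is precisely the value at $T$ of the chord of $f$ on $[|B_{k+1}|,|B_k|]$, which lies below $f(T)=T^{1/p}$ by concavity; combining with $T=\sum_n\beta_n\le|E|$ closes the argument. Everything else is routine Abel-summation bookkeeping.
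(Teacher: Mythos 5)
Your proof is correct, but it takes a genuinely different route from the paper. The paper argues directly on the representations: it first proves the Abel-summation inequality (\ref{p,1Lineq}) by induction, then proves the lemma by induction on $N$, reducing an arbitrary countable representation $|s|=\sum_j b_j|S_j|^{-1/p}1_{S_j}$ to one with nested sets and finitely many terms via truncation and extremal-value arguments before invoking the inductive hypothesis. You instead get the upper bound from the same layer-cake representation $s=\sum_n(a_n-a_{n-1})1_{B_n}$, but obtain the lower bound by exhibiting an explicit dual witness $g=\sum_n\lambda_n1_{A_n}$ with $\lambda_n$ the secant slopes of $t\mapsto t^{1/p}$ on $[|B_{n+1}|,|B_n|]$; the bound $\int_E g\le|E|^{1/p}$ for all finite-measure $E$ (equivalently $\|g\|_{q,*}\le1$ in the sense of Definition \ref{defdual}) follows from concavity plus the greedy/exchange argument, and then any admissible representation is dominated term by term, with the sum--integral interchange justified by Tonelli since all terms are nonnegative. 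This buys you a shorter argument that handles countable representations with no truncation or nestedness reduction, and it makes explicit the duality between $\|\cdot\|_{\mc{L}}$ and $\|\cdot\|_{q,*}$ that the paper exploits elsewhere; the paper's proof, by contrast, is self-contained and purely combinatorial. Two small points to tidy up: state the monotonicity $\lambda_1\le\cdots\le\lambda_N$ after discarding indices with $|A_n|=0$ (for which $\beta_n=0$ anyway, so they are harmless), and note that terms with $|E_m|=0$ or $|B_n|=0$ can be dropped from the representations without affecting anything.
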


\begin{proof} First we prove for any $k\ge 1$ that when $c_0=0<c_1<c_2<\cdots<c_k$ and $C_j=\cup_{i=j}^kE_i$ for measurable sets $E_i\subset\R^d$ of finite measure, 
\begin{align} \sum_{j=1}^k(c_j-c_{j-1})|C_j|^{1/p}\le \sum_{j=1}^kc_j|E_j|^{1/p}. \label{p,1Lineq}\end{align}

If  $k=1$, then clearly $\sum_{j=1}^{k}(c_j^n-c_{j-1}^n)|C_j^n|^{1/p}= c_1|C_1|^{1/p}=\sum_{j=1}^{1}c_j|E_j|^{1/p}$. Suppose for $k\ge 1$ that when $c_1<c_2<\cdots<c_k$ and $C_j=\cup_{i=j}^kE_i$ for measurable sets $E_i\subset\R^d$ of finite measure, 
\[ \sum_{j=1}^k(c_j-c_{j-1})|C_j|^{1/p}\le \sum_{j=1}^kc_j|E_j|^{1/p}. \]
Then 
\begin{align*} 
\sum_{j=1}^{k+1}(c_j-c_{j-1})|C_j^n|^{1/p} &= c_1|C_1|^{1/p}+(c_2-c_1)|C_2|^{1/p}+\cdots+(c_{k+1}-c_{k})|C_{k+1}|^{1/p} \\
&\le c_1(|E_1 |^{1/p}+|C_2 |^{1/p})+(c_2 -c_1 )|C_2 |^{1/p}+\cdots+(c_{k+1}-c_{k})|C_{k+1}|^{1/p} \\
&=c_1 |E_1 |^{1/p}+(c_2-c_0) |C_2 |^{1/p}+\cdots+(c_{k+1}-c_{k})|C_{k+1} |^{1/p} \\
&\le c_1|E_1 |^{1/p}+\sum_{j=2}^{k+1}c_j|E_j|^{1/p} =\sum_{j=1}^{k+1}c_j|E_j|^{1/p},
\end{align*}
so (\ref{p,1Lineq}) is proved.

Next we prove the lemma inductively, where notation is as in the statement of the lemma. If $N=1$, suppose $a_11_{A_1}=\sum_{j=1}^\infty b_j1_{S_j}$ where $b_j\ge 0$, $|S_j|<\infty$. Then 
\begin{align*}
a_1|A_1|^{1/p}=\|a_11_{A_1}\|_p&= \|\sum_{j=1}^Nb_j1_{S_j}+\sum_{j=N+1}^\infty b_j1_{S_j}\|_p \\
&\le \sum_{j=1}^\infty b_j|S_j|^{1/p}+\lim_{N\to\infty }\|\sum_{j=N+1}^\infty b_j1_{S_j}\|_p  
\end{align*}
where $ \lim_{N\to\infty }\|\sum_{j=N+1}^\infty b_j1_{S_j}\|_p =0$ by Lebesgue's dominated convergence theorem.

Now suppose that the lemma holds for $N-1\ge 1$. Consider 
\[ \sum_{n=1}^{N}a_n1_{A_n}=\sum_{j=1}^\infty b_j1_{S_j}\]

where $b_{j-1}\ge b_j\ge0$, $S_j\subset\cup_{n=1}^N A_n$, the $S_j$ are distinct, and $|S_j|>0$. From (\ref{p,1Lineq}), we have for each $M>0$ that 
\[ \sum_{j=1}^M(b_j-b_{j+1})|\cup_{k=1}^j S_{k}|^{1/p}\le \sum_{j=1}^Mb_j|S_j|^{1/p} .\]

Letting $M\to\infty $ and noting that $\sum_{j=1}^\infty b_j1_{S_j}=\sum_{j=1}^\infty (b_j-b_{j+1})1_{\cup_{k=1}^jS_k}$, we can assume that $S_1\subset S_2\subset\cdots $ and $b_j\ge 0$ but are not necessarily decreasing. Since the simple function $s$ achieves its $L^\infty$ norm on $A_N$, and the series takes its maximum on $S_1$, we must have $S_1=A_1$ and  

\[      a_N=\sum_{j=1}^\infty b_j,      \]

so $\sum_{k=j}^\infty b_k\to 0$ as $j\to\infty$. The simple function $s$ achieves its minimum (on a set of positive measure) in $\cup_{n=1}^NA_n$ on $A_1$, but the series takes the values of $\sum_{k=j}^\infty b_k$ on positive measure sets in $\cup_{n=1}^NA_n$, so there is no minimum unless $\sum_{k=j}^\infty b_k$ is zero for large enough $j$. Thus we may write 

\[ \sum_{n=1}^N(a_n-a_{n-1})1_{B_n}=\sum_{j=1}^Mb_j1_{S_j} \]

where $S_j\subset S_{j+1}$ and $b_j>0$. We note $B_1=S_M$ and $a_1=b_M$. Then invoking the inductive hypothesis, we have 
\[\sum_{n=2}^N(a_n-a_{n-1})|B_n|^{1/p}+a_1|B_1|^{1/p}\le \sum_{j=1}^{M-1}b_j|S_j|^{1/p}+b_M|S_M|^{1/p}, \]
as desired.

\end{proof}

For all $f\in L(p,1)$,
\begin{align} \label{coffee}  \|f\|_{p1}^*\le \|f\|_{p1}\le \frac{p}{p-1}\|f\|_{p1}^*,  \end{align}
which is proved in  Chapter V, \textsection{}3 in \cite{steinweiss}. From the nonincreasing property of $f^*$, it is clear that $\frac{1}{t}\int_0^tf^*(u)du\ge f^*(t)$ for $t>0$, which implies that$ \|f\|_{p1}\ge \|f\|_{p1}^*$. This combined with (\ref{coffee}) implies that $\|f\|_{p1}^*$ is finite if and only if $\|f\|_{p1}$ is finite.

\begin{lemma}\label{equivnorms} Let $d\ge 1$. Let $p>1$ and let $q$ be the conjugate exponent to $p$. For all measurable functions $f:\R^d\to\C$ with $\|f\|_{\mc{L}}<\infty$ and $\|f\|_{p1}^*<\infty$, $\|f\|_{\mc{L}}=\|f\|_{p1}^*$.
\end{lemma}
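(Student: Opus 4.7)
The plan is to verify the identity $\|f\|_{\mc L}=\|f\|_{p1}^*$ first on nonnegative simple functions (where Lemma \ref{itcrowd} does most of the work) and then to extend to all measurable $f$ by monotone approximation together with a subadditivity argument. For a simple function $s=\sum_{n=1}^N a_n 1_{A_n}$ in the form of Lemma \ref{itcrowd}, the decreasing rearrangement satisfies $s^*(t)=a_n$ on $t\in(|B_{n+1}|,|B_n|]$, so direct evaluation of the definition gives $\|s\|_{p1}^*=\sum_n a_n(|B_n|^{1/p}-|B_{n+1}|^{1/p})$, which by summation by parts equals the formula $\sum_n(a_n-a_{n-1})|B_n|^{1/p}$ supplied by Lemma \ref{itcrowd} for $\|s\|_{\mc L}$. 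The inequality $\|f\|_{p1}^*\le\|f\|_{\mc L}$ then follows for general $f$: given any admissible representation $|f|=\sum_{n\ge 1} a_n|E_n|^{-1/p}1_{E_n}$ with $a_n\ge 0$, the partial sums $T_N$ are nonnegative simple with $T_N\nearrow|f|$ a.e. Since decreasing rearrangement respects monotone pointwise limits, $T_N^*\nearrow f^*$ and MCT gives $\|T_N\|_{p1}^*\nearrow\|f\|_{p1}^*$. Applying the simple-function case and the evident bound $\|T_N\|_{\mc L}\le\sum_{n=1}^N a_n$ (the partial sum is itself an admissible representation of $T_N$), then letting $N\to\infty$ and taking the infimum over representations, completes this direction.

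For the harder inequality $\|f\|_{\mc L}\le\|f\|_{p1}^*$, I use two elementary facts about $\|\cdot\|_{\mc L}$: it is subadditive on nonnegative summands (concatenating admissible representations remains admissible) and satisfies $\|c\cdot 1_E\|_{\mc L}\le c|E|^{1/p}$. First I establish a weak equivalence $\|h\|_{\mc L}\le 2\|h\|_{p1}^*$ for every nonnegative $h$ via the binary decomposition $h=\sum_{n\in\Z} 2^n 1_{F_n}$, where $F_n=\{x:\lfloor 2^{-n}h(x)\rfloor\text{ is odd}\}$ is the set where the $n$-th binary digit of $h(x)$ equals $1$; subadditivity gives $\|h\|_{\mc L}\le\sum_n 2^n|F_n|^{1/p}$, and since $F_n\subset\{h\ge 2^n\}$ and $t\mapsto|\{h>t\}|^{1/p}$ is nonincreasing, a dyadic-vs-continuous Riemann sum comparison bounds the right side by $2\int_0^\infty|\{h>t\}|^{1/p}\,dt$. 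A Fubini computation --- expanding $|\{h>t\}|^{1/p}=\tfrac{1}{p}\int_0^{|\{h>t\}|}u^{-1/q}\,du$ and interchanging the order of integration --- identifies $\int_0^\infty|\{h>t\}|^{1/p}\,dt$ with $\|h\|_{p1}^*$, proving the weak equivalence.

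To finish, approximate $|f|$ from below by the dyadic step functions $g_n=2^{-n}\lfloor 2^n|f|\rfloor=\sum_{k\ge 1}2^{-n}1_{\{|f|>k 2^{-n}\}}$. The explicit representation together with the lower-Riemann-sum estimate for the decreasing function $t\mapsto|\{|f|>t\}|^{1/p}$ yields $\|g_n\|_{\mc L}\le\sum_{k\ge 1}2^{-n}|\{|f|>k2^{-n}\}|^{1/p}\le\int_0^\infty|\{|f|>t\}|^{1/p}\,dt=\|f\|_{p1}^*$. For the residual $r_n=|f|-g_n\in[0,2^{-n})$ we have $r_n\le|f|$, so $r_n^*(t)\le\min(2^{-n},f^*(t))$, and dominated convergence forces $\|r_n\|_{p1}^*\to 0$; the weak equivalence then gives $\|r_n\|_{\mc L}\le 2\|r_n\|_{p1}^*\to 0$. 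By subadditivity, $\|f\|_{\mc L}\le\|g_n\|_{\mc L}+\|r_n\|_{\mc L}\le\|f\|_{p1}^*+o(1)$, which proves $\|f\|_{\mc L}\le\|f\|_{p1}^*$. The main obstacle is the apparently circular bound on $\|r_n\|_{\mc L}$ in terms of $\|r_n\|_{p1}^*$; it is resolved by establishing the weak equivalence as a preliminary step, whose proof is self-contained since it uses only an explicit binary representation and elementary Riemann-sum estimates. The case when either quantity is infinite follows from the same two inequalities used contrapositively.
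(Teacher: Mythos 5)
Your proof is correct, and beyond the shared first step it follows a genuinely different route from the paper's. Both arguments begin identically: evaluate $\|s\|_{p1}^*$ for a nonnegative simple function directly from the decreasing rearrangement and match it with the formula of Lemma \ref{itcrowd}. After that, the paper proceeds by two successive limiting stages --- first bounded functions with finite-measure support, squeezed between simple functions $|f|-\tfrac1n\le s_n\le |f|$, then general $f$ via the truncations $f1_{\{1/n\le |f|\le n\}}$ --- each stage requiring its own interchange-of-limits argument for each of the two quasinorms. You instead prove the two one-sided inequalities by separate mechanisms: for $\|f\|_{p1}^*\le\|f\|_{\mc{L}}$ you take monotone partial sums of an arbitrary admissible representation and use that decreasing rearrangement respects increasing limits, which converts the simple-function case directly into the general bound; for $\|f\|_{\mc{L}}\le\|f\|_{p1}^*$ you first identify $\|h\|_{p1}^*=\int_0^\infty|\{h>t\}|^{1/p}\,dt$ by Fubini, extract a crude bound $\|h\|_{\mc{L}}\le 2\|h\|_{p1}^*$ from the binary-digit representation, and then recover the sharp constant by discretizing $|f|$ from below on the dyadic grid and absorbing the uniformly small residual $r_n$ with the crude bound. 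Your route avoids the paper's double truncation and its squeeze arguments at the price of the auxiliary factor-$2$ estimate; the paper's route reuses the exact simple-function identity at every stage and never needs a lossy intermediate inequality. Two small points to tidy: in the identity $2^{-n}\lfloor 2^n|f|\rfloor=\sum_{k\ge1}2^{-n}1_{\{|f|>k2^{-n}\}}$ the superlevel sets should be taken with $\ge$ (with strict inequality the two sides differ where $2^n|f|$ is a positive integer), although either version satisfies $|f|-2^{-n}\le g_n\le |f|$ and the rest of the argument is unaffected; and you should remark explicitly that $\|f\|_{p1}^*<\infty$ forces $|\{|f|>t\}|<\infty$ for every $t>0$ (otherwise $f^*\ge t$ identically and the defining integral diverges), which is what makes your binary and dyadic decompositions admissible competitors in the definition of $\|\cdot\|_{\mc{L}}$.
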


\begin{proof} First we show the equivalence for nonnegative simple functions. Write $s=\sum_{n=1}^Na_n1_{A_n}$ where the $A_n$ are pairwise disjoint and $0<a_1<\cdots< a_N$. Let $a_0=0$ and let $B_n=\cup_{k=n}^NA_k$ for $n=1,\ldots,N$, and let $|B_{N+1}|=0$. 

Calculate 

\begin{align*}
\|s\|_{p1}^*&= \frac{1}{p}\int_0^\infty t^{-1/q}s^*(t)dt =\frac{1}{p}\sum_{n=0}^{N-1}\int_{|B_{N-n+1}|}^{|B_{N-n}|} t^{-1/q}\inf\{r:|\{x:|s(x)|>r\}|\le t\}dt \\
&=\frac{1}{p}\sum_{n=0}^{N-1}\int_{|B_{N-n+1}|}^{|B_{N-n}|} t^{-1/q}\inf\{r:|\{x:|s(x)|>r\}|\le |B_{N-n+1}|\}dt \\
&=\frac{1}{p}\sum_{n=0}^{N-1}a_{N-n}\int_{|B_{N-n+1}|}^{|B_{N-n}|} t^{-1/q}dt \\
&= \sum_{n=0}^{N-1}a_{N-n}\left(|B_{N-n}|^{1/p}-|B_{N-n+1}|^{1/p}\right) \\
&= \sum_{n=0}^{N-1}a_{N-n}|B_{N-n}|^{1/p}- \sum_{n=0}^{N-1}a_{N-n}|B_{N-n+1}|^{1/p} \\
&= \sum_{n=1}^{N}a_{n}|B_{n}|^{1/p}- \sum_{n=1}^{N}a_{n-1}|B_{n}|^{1/p}= \sum_{n=1}^{N}(a_{n}-a_{n-1})|B_{n}|^{1/p}.
\end{align*}

Thus by Lemma \ref{itcrowd}, we have $\|s\|_{\mc{L}}=\|s\|_{p1}^*$ for all nonnegative simple functions. 

Next, consider a $f\in L(p,1)$ with finite support $A$ and $L^\infty$ norm $M>0$. From the definition of $\|\cdot \|_{\mc{L}}$ and Lemma \ref{itcrowd}, we can choose nonnegative simple functions $|f|-1/n\le s_n\le |f|$ such that $\lim_{n\to\infty}s_n(x)=|f(x)|$ for a.e. $x\in\R^d$ and 
\[ \|f\|_{\mc{L}}=\lim_{n\to\infty}\|s_n\|_{\mc{L}}=\lim_{n\to\infty}\|s_n\|_{p1}^*.  \]

Note that 
\begin{align*}
    \frac{1}{p}\int_0^\infty t^{-1/q}s_n^*(t)dt &=\frac{1}{p}\int_0^{|A|} t^{-1/q}s_n^*(t)dt. 
\end{align*}
Since $(|f|-1/n1_A)^*\le s_n^*\le |f|^*$, we have the upper bound 
\begin{align*}
    \frac{1}{p}\int_0^{|A|} t^{-1/q}s_n^*(t)dt\le \frac{1}{p}\int_0^{|A|} t^{-1/q}f^*(t)dt
\end{align*}
and the lower bound 
\begin{align*} 
\frac{1}{p}\int_0^{|A|} t^{-1/q}\inf\{r:|\{x:s_n(x)>r\}|\le t\}dt&\ge  \frac{1}{p}\int_0^{|A|} t^{-1/q}\inf\{r:|\{x:|f(x)|>r+1/n\}|\le t\}dt\\
&= \frac{1}{p}\int_0^{|A|} t^{-1/q}\inf\{r:|\{x:|f(x)|>r\}|\le t\}dt-p|A|^{1/p}\frac{1}{n}. 
\end{align*} 
Thus by the squeeze theorem, we have that $\lim_{n\to\infty }\|s_n\|_{p1}^*=\|f\|_{p1}^*$.

For general $f\in L(p,1)$, define $f_n=f1_{\{1/n\le |f|\le n\}}$. We argue that $\|f\|_{\mc{L}}=\lim_{n\to\infty}\|f_n\|_{\mc{L}}$. 

If $|f_n|= \sum_na_n1_{A_n}$, $|f|1_{\{|f|>n\}}=\sum_mb_m1_{B_m}$, and $|f|1_{\{|f|<1/n\}}=\sum_k c_k1_{C_k}$, then $|f|=\sum_n a_n 1_{A_n}+\sum_mb_m1_{B_m}+\sum_kc_k1_{C_k}$ and so
\[ \|f\|_{\mc{L}}\le \|f_n\|_{\mc{L}}+\||f|1_{\{|f|>n\}}\|_{\mc{L}}+\||f|1_{|f|<1/n\}}\|_{\mc{L}} . \]

Since if $|f|=\sum_me_m1_{E_m}$, $e_m>0$ with $\sum_me_m|E_m|^{1/p}<\infty$ then $|f_n|=\sum_me_m1_{E_m\cap \{1/n\le |f|\le n\}}$ with 
\[ \sum_m e_m|E_m\cap\{1/n\le |f|\le n\}|^{1/p}\le \sum_me_m|E_m|^{1/p}<\infty, \]
we also have $\|f_n\|_{\mc{L}}\le \|f\|_{\mc{L}}$. To show that $\|f\|_{\mc{L}}=\lim_{n\to\infty}\|f_n\|_{\mc{L}}$, it suffices to show that $\lim_{n\to\infty}\|f1_{\{|f|>n\}}\|_{\mc{L}}=\lim_{n\to\infty}\|f1_{\{|f|>1/n\}}\|_{\mc{L}}=0$.

If $|f|=\sum_me_m1_{E_m}$, $e_m>0$ with $\sum_m e_m|E_m|^{1/p}<\infty$, then 
\[ \limsup_{n\to\infty} \||f|1_{\{|f|>n\}}\|_{\mc{L}}\le  \limsup_{n\to\infty}\sum_me_m|E_m\cap \{|f|>n\}|^{1/p}=0 \]
where we used the monotone convergence theorem in the last line. Similarly, we have that 
\[ \limsup_{n\to\infty}\|f1_{\{|f|<1/n\}}\|_{\mc{L}}\le \limsup_{n\to\infty}\sum_m e_m|E_m\cap \{|f|<1/n\}|^{1/p}.  \]
Since $\sum_me_m|E_m\cap \{|f|<1/n\}|^{1/p}$ is a decreasing sequence in $n$, 
\[ \limsup_{n\to\infty}\sum_me_m|E_m\cap \{|f|<1/n\}|^{1/p}=\inf_n\sum_me_m|E_m\cap \{|f|<1/n\}|^{1/p}. \]

We also have for each $M>1$
\begin{align*} 
\inf_{n}\sum_me_m|E_m\cap \{|f|<1/n\}|^{1/p}&\le \inf_n\sum_{m\le M}e_m|E_m\cap \{|f|<1/n\}|^{1/p}+\sum_{m>M}e_m|E_m|^{1/p}\\
&= \sum_{m\le M}e_m|E_m\cap \{|f|=0\}|^{1/p}+\sum_{m>M}e_m|E_m|^{1/p}\\
&=\sum_{m>M}e_m|E_m|^{1/p}. 
\end{align*} 
Letting $M$ go to infinity, we have $\lim_{n\to\infty}\|f1_{\{|f|<1/n\}}\|_{\mc{L}}=0$. Conclude that $\|f\|_{\mc{L}}=\lim_{n\to\infty}\|f_n\|_{\mc{L}}=\lim_{n\to\infty}\|f_n\|_{p1}^*$. 

Finally, we need to show that $\lim_{n\to\infty}\|f_n\|_{p1}^*=\|f\|_{p1}^*$. Since $\|f_n\|_{p1}^*\le \|f\|_{p1}^*$ for each $n$ and $\lim_{M\to\infty }\int_M^\infty t^{-1/q}f_n^*(t)dt\le \lim_{M\to\infty}\int_M^\infty t^{-1/q}f^*(t)dt=0$, it suffices to show that for each $M>0$,
\[ \lim_{n\to\infty }\int_0^Mt^{-1/q}f_n^*(t)dt\ge \int_0^Mt^{-1/q}f^*(t)dt. \]

We note that 
\begin{align*}
\int_0^Mt^{-1/q}f_n^*(t)dt&= \int_0^Mt^{-1/q}\inf\{r:|\{x:|f_n(x)|>r\}|\le t\} dt\\
    &\ge \int_0^Mt^{-1/q}\inf\{r:|\{x:|f(x)|1_{\{|f|\le n\}}>r\}|\le t\} dt-pM^{1/p}\frac{1}{n}\\
    &= \ge \int_0^Mt^{-1/q}f^*(t+|\{x:|f(x)|>n\}|) dt-pM^{1/p}\frac{1}{n}. 
\end{align*}
Since $\lim_{n\to\infty }|\{x:|f(x)|>n\}|=0$ and $f^*$ is a.e. continuous, by the Lebesgue dominated convergence theorem, 
\[\lim_{n\to\infty}\int_0^Mt^{-1/q}f_n^*(t)dt\ge \int_0^Mt^{-1/q}f^*(t)dt.  \]

\end{proof}

\begin{corollary}\label{equivnorms2} Let $d\ge 1$. Let $p>1$ and let $q$ be the conjugate exponent to $p$. For all measurable functions $f:\R^d\to\C$, $\|f\|_{\mc{L}}<\infty$ if and only if  $\|f\|_{p1}^*<\infty$.
\end{corollary}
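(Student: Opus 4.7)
The plan is to prove the two implications separately, since Lemma \ref{equivnorms} already delivers the identity $\|f\|_{\mc{L}}=\|f\|_{p1}^*$ once both quantities are known to be finite.

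For the direction $\|f\|_{\mc{L}}<\infty\Rightarrow\|f\|_{p1}^*<\infty$, my plan is to pass through Calder\'on's norm $\|\cdot\|_{p1}$ (Definition \ref{defp1}), which is a genuine norm and which dominates $\|\cdot\|_{p1}^*$ by (\ref{coffee}). First I would compute $\|1_E\|_{p1}$ directly from Definition \ref{defp1}: using $(1_E)^*=1_{[0,|E|]}$ and $1/p+1/q=1$, a one-line calculation gives $\|1_E\|_{p1}=q|E|^{1/p}$. Then, given any admissible representation $|f|=\sum_n a_n|E_n|^{-1/p}1_{E_n}$, the triangle inequality for $\|\cdot\|_{p1}$ will yield $\|f\|_{p1}\le \sum_n a_n|E_n|^{-1/p}\|1_{E_n}\|_{p1}=q\sum_n a_n$, and taking the infimum over representations then delivers $\|f\|_{p1}^*\le\|f\|_{p1}\le q\|f\|_{\mc{L}}<\infty$.

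For the reverse direction, assume $\|f\|_{p1}^*<\infty$. I would first observe that the distribution function $\lambda_f(t):=|\{|f|>t\}|$ must be finite for every $t>0$, since otherwise $f^*\ge t$ on $[0,\infty)$ and the defining integral for $\|f\|_{p1}^*$ diverges. The main construction is to decompose the support of $f$ into the dyadic level sets $B_k:=\{2^k<|f|\le 2^{k+1}\}$, $k\in\Z$, which are pairwise disjoint with $|B_k|\le\lambda_f(2^k)<\infty$. On each $B_k$, $|f|/2^{k+1}$ takes values in $(1/2,1]$, and its binary expansion $|f(x)|/2^{k+1}=\sum_{n\ge1}2^{-n}b_n(x)$ (with $b_n(x)\in\{0,1\}$) produces an honest $\mc{L}$-admissible representation $|f|1_{B_k}=\sum_{n\ge 1}2^{k+1-n}1_{F_{k,n}}$ with $F_{k,n}\subset B_k$, yielding $\||f|1_{B_k}\|_{\mc{L}}\le 2^{k+1}|B_k|^{1/p}$. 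Reindexing the doubly-indexed family $(F_{k,n})$ will then give a single admissible representation of $|f|$ (here I use that the $B_k$ partition $\{|f|>0\}$), and hence $\|f\|_{\mc{L}}\le \sum_k 2^{k+1}\lambda_f(2^k)^{1/p}$.

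To close out the proof I would compare the dyadic sum to $\|f\|_{p1}^*$. A Fubini calculation together with the substitution $t=u^p$ will give the identity $\int_0^\infty\lambda_f(s)^{1/p}\,ds=\|f\|_{p1}^*$, while the monotonicity of $\lambda_f$ supplies the estimate $2^{k-1}\lambda_f(2^k)^{1/p}\le\int_{2^{k-1}}^{2^k}\lambda_f(s)^{1/p}\,ds$; summing in $k$ bounds $\sum_k 2^{k+1}\lambda_f(2^k)^{1/p}\le 4\|f\|_{p1}^*$. Putting everything together, $\|f\|_{\mc{L}}\le 4\|f\|_{p1}^*<\infty$, which completes the equivalence. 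The only delicate point I anticipate is verifying that concatenating countably many $\mc{L}$-admissible sums produces an admissible sum in the sense of the definition of $\|\cdot\|_{\mc{L}}$, but this is easy here since the $B_k$ are pairwise disjoint and each local binary expansion is an exact pointwise identity on $B_k$.
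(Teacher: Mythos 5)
Your proposal is correct, but it proves the corollary by a genuinely different route than the paper. The paper deduces both implications from the approximation machinery already built inside the proof of Lemma \ref{equivnorms}: for the forward direction it reuses the truncations $f_n=f1_{\{1/n\le|f|\le n\}}$, for which $\|f\|_{\mc{L}}=\lim_n\|f_n\|_{\mc{L}}$ and $\lim_n\|f_n\|_{p1}^*=\|f\|_{p1}^*$, and for the reverse direction it reuses the simple functions $s_n\le|f|$ together with the exact identity $\|s_n\|_{\mc{L}}=\|s_n\|_{p1}^*$ from Lemma \ref{itcrowd}. You instead give direct two-sided quantitative bounds: $\|f\|_{p1}^*\le\|f\|_{p1}\le q\,\|f\|_{\mc{L}}$ via the computation $\|1_E\|_{p1}=q|E|^{1/p}$, the triangle inequality for Calder\'on's norm and (\ref{coffee}); and $\|f\|_{\mc{L}}\le 4\|f\|_{p1}^*$ via dyadic level sets, binary expansions producing an explicit admissible representation, and the layer-cake identity $\|f\|_{p1}^*=\int_0^\infty\lambda_f(s)^{1/p}\,ds$ (which does hold; your finiteness claim for $\lambda_f(t)$ and the comparison $\sum_k 2^{k+1}\lambda_f(2^k)^{1/p}\le 4\|f\|_{p1}^*$ are also correct). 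What your approach buys is an explicit quantitative equivalence of the two quantities that is independent of the exact-equality Lemma \ref{equivnorms}; what it costs is reliance on two facts you should make explicit: the subadditivity of $\|\cdot\|_{p1}$ (a standard property of $t\mapsto\int_0^t f^*$, only implicitly endorsed in the paper by calling it Calder\'on's norm, and needing a Fatou/monotone-convergence step to pass from finite to countable sums), and a measurable choice of the binary digit sets $F_{k,n}$ (routine, since the digits are Borel functions of $|f|$ once a convention for dyadic rationals is fixed). The paper's route is shorter given what precedes it but is tied to the structure of that lemma's proof; yours stands alone and yields constants.
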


\begin{proof} Suppose that $\|f\|_{\mc{L}}<\infty$. We showed in the proof of Lemma \ref{equivnorms} that $\|f\|_{\mc{L}}=\lim_{n\to\infty}\|f_n\|_{\mc{L}}$ where $|f_n|$ are bounded with finite support and monotonically increasing a.e. to $|f|$. We also showed that for those $f_n$, $\lim_{n\to\infty}\|f_n\|_{p1}^*=\|f\|_{p1}^*$, so $\|f\|_{p1}^*<\infty$. 

Next, suppose $\|f\|_{p1}^*<\infty$. By definition of $\|f\|_{\mc{L}}$ (regardless of whether this quantity is finite or infinite), there exist simple functions $0\le s_n\le |f|$ such that $\|f\|_{\mc{L}}=\lim_{n\to\infty}\|s_n\|_{\mc{L}}$. But we showed in the proof of Lemma \ref{equivnorms} that $\|s_n\|_{\mc{L}}=\|s_n\|_{p1}^*$ for each $n$. Since $\|s_n\|_{p1}^*\le \|f\|_{p1}^*$ for all $n$, we must have $\|f\|_{\mc{L}}<\infty$.  

\end{proof}

\end{document}